\newcommand*{\uu}{\mathfrak{u}}
\newcommand*{\C}{\mathbb{C}}
\newcommand*{\R}{\mathbb{R}}
\newcommand*{\T}{\mathbb{T}}
\newcommand*{\sphere}[1]{\mathrm{S}^{#1}}
\newcommand*{\std}{\mathrm{std}}
\newcommand*{\Bcal}{\mathcal{B}}
\newcommand*{\Fcal}{\mathcal{F}}
\newcommand*{\Gcal}{\mathcal{G}}
\newcommand{\isomorphic}{\approx}
\newcommand*{\abs}[1]{|#1|}
\newcommand*{\norm}[1]{\|#1\|}
\newcommand*{\dotprod}[2]{\langle #1 \:\vert\: #2 \rangle}
\newcommand*{\Hermitian}[1]{\mathcal{H}(#1)}
\newcommand*{\matrices}[1]{\mathcal{M}_{#1}(\C)}
\newcommand{\orbit}[1]{\mathcal{O}(#1)}
\newcommand{\restrict}[3]{#1|_{#2}^{#3}}
\newcommand*{\myblockmatrix}[4]{
	\left( \begin{array}{ccc|c}
		& & & \\
		& #1 & & #2 \\
		& & & \\
		\hline
		& #3 & & #4
	\end{array} \right)
}
\DeclareMathOperator{\diag}{diag}
\DeclareMathOperator{\Ad}{Ad}
\DeclareMathOperator{\Span}{Span}
\DeclareMathOperator{\tr}{tr}
\DeclareMathOperator{\proj}{proj}
\DeclareMathOperator{\Forall}{\forall}
\newtheorem{theorem}{Theorem}[section]
\newtheorem{lemma}[theorem]{Lemma}
\newtheorem{definition}[theorem]{Definition}
\newtheorem{proposition}[theorem]{Proposition}
\theoremstyle{remark}
\newtheorem{remark}[theorem]{Remark}
\numberwithin{equation}{section}
\title[Singular fibers of the Gelfand--Cetlin system]{Singular fibers of the Gelfand--Cetlin system on $\mathfrak{u}(n)^*$}
\author[D. Bouloc]{Damien Bouloc}
\address{Institut de Math\'ematiques de Toulouse, UMR5219 \\
Universit\'e Paul Sabatier \\
118 route de Narbonne \\
31062 Toulouse, France}
\email{damien.bouloc@math.univ-toulouse.fr}
\author[E. Miranda]{Eva Miranda}
\address{ Laboratory of Geometry and Dynamical Systems-EPSEB\\ Department of Mathematics-UPC and BGSMath \\                             
Universitat Polit\`ecnica de Catalunya \\
Avinguda del Doctor Mara\~non 44-50 \\
08028, Barcelona, Spain}
\email{eva.miranda@upc.edu}
\thanks{ E. Miranda  is supported by the Catalan Institution for Research and Advanced Studies via an ICREA Academia Prize 2016 and partially supported  by the grants reference number MTM2015-69135-P (MINECO/FEDER) and reference number 2017SGR932 (AGAUR)}
\author[N.T. Zung]{Nguyen Tien Zung}
\address{Institut de Math\'ematiques de Toulouse, UMR5219, 
Universit\'e Paul Sabatier, 118 route de Narbonne,
31062 Toulouse, France}
\thanks{N.T. Zung was partially supported by a research consulting contract with the Center of
Geometry and Physics, IBS, Republic of Korea}
\email{tienzung@math.univ-toulouse.fr}
\date{\today}
\begin{document}
	
	\begin{abstract} In this paper, we show that every singular fiber of the Gelfand--Cetlin system
 on coadjoint orbits of unitary groups is a smooth isotropic submanifold which is diffeomorphic
 to a $2$-stage quotient of a compact Lie group by free actions of two other compact Lie groups. In many cases,
 these singular fibers can be shown to be homogeneous spaces or even diffeomorphic to compact Lie groups. We also give a combinatorial formula for computing the dimensions of all singular fibers,
 and give a detailed description of these singular fibers in many cases,  including the so-called
 (multi-)diamond singularities. These (multi-)diamond singular fibers are degenerate for the Gelfand--Cetlin system, but they are Lagrangian submanifolds diffeomorphic to direct products of special
unitary groups and tori. Our methods of study are based on different ideas involving complex ellipsoids, 
Lie groupoids, and also general ideas coming from the theory of singularities of integrable Hamiltonian
systems.
\end{abstract}
\subjclass[2010]{37J35, 17B08, 57R45 }
\maketitle

\setcounter{tocdepth}{1}
\tableofcontents

\section{Introduction}

The Gelfand-Cetlin system is a famous integrable Hamiltonian system
on the coadjoint orbits of unitary groups, which was found and studied
by Guillemin and Sternberg in early 1980s \cite{GS83gelfand,GS83collective},
using the so-called Thimm's method of collective motions
\cite{ThimmCollective}, and related to the classical work of Gelfand and Cetlin in representation
theory \cite{GC1950}. A result of  Alekseev and Meinrenken \cite{alekseev} says that 
this system is also equivalent to an integrable system
found by Flaschka and Ratiu \cite{Ratiu}, via the so-called 
Ginzburg-Weinstein transformation.
Compared to many other integrable systems, especially those
arising in classical mechanics and physics (see, e.g., \cite{bolsinovfomenko,zung1996symplectic}),
the Gelfand-Cetlin has some very special topological and geometric properties:

\begin{itemize}
\item Its base space (i.e. the space of connected fibers of the momentum map)
is (affinely equivalent to) a convex polytope, similar to the case of toric systems
(whose base spaces are the so-called Delzant polytopes \cite{delzant}), even though
the system is \emph{not} toric.

\item In fact, the Gelfand-Cetlin momentum map of the system 
(which consists of eigenvalue functions of a chain
of matrices) generates a toric action, but only on a dense open set of the symplectic manifold
in question. This momentum map is \emph{not} globally smooth, though it can be changed into
a smooth momentum map with the same fibers (i.e. preimages) by taking the symmetric functions
of the eigenvalue functions. However, this smooth momentum map does not generate a torus
action. 

\item Unlike the toric case, where the singularities are all elliptic nondegenerate
(in the sense of Vey--Eliasson; see, e.g., \cite{eliasson1990normal,zung1996symplectic,MirandaZung_NF2004,Miranda_CEJM2014, mirandathesis}
for nondegenerate singularities),
the Gelfand-Cetlin system admits many degenerate singularities.

\item The degenerate singular fibers of the Gelfand-Cetlin system are
very peculiar in the sense that
they are all smooth isotropic submanifolds, as will be shown in this paper (see also Cho--Kim--Oh
\cite{CKO_GC2018} where the same result is obtained
by different methods),
while many degenerate singular fibers of other integrable Hamiltonian systems 
are singular varieties. (See, e.g., \cite{bolsinov,bolsinovfomenko, Zung_CorankOne2000,Zung_CRAS2003}
for various results about degenerate singularities).

\item It turns out that the Gelfand-Cetlin systems can be obtained by the method of
toric degenerations, see Nishinou--Nohara--Ueda \cite{NNU_GC2010}. This method, which comes from algebraic geometry,
is now known to generate a lot of ``artificial'' integrable Hamiltonian systems
(see, e.g. Harada--Kaveh \cite{harada2015integrable} and references therein). This toric degeneration
nature of the Gelfand-Cetlin system may be strongly related to its topological
and geometrical particularities.

\end{itemize}

This paper is the result of a project dating back to 2006
to study the singularities of the 
Gelfand-Cetlin system, from the point of view of the general topological
theory of integrable Hamiltonian systems and their singularities. Unlike
some other papers on the subject like \cite{NNU_GC2010,CKO_GC2018},
which are mainly motivated by considerations from algebraic geometry,
our work is mainly motivated by considerations from dynamical systems.
The first results of this project appeared in the form of a PhD thesis 
in 2009 of Iman Alamiddine \cite{alamiddine2009GelfandCeitlin}, who did
it under the supervision of N.T. Zung and with the help of E. Miranda.
The main result of this thesis is a complete description
of a degenerate singular fiber in $\mathfrak{u}(3)^*$, which is a Lagrangian submanifold
diffeomorphic to $S^3$, together with a neighborhood of it (i.e. a symplectic
normal form for the system around this degenerate fiber).
At that time, we conjectured that all the fibers of the Gelfand-Cetlin system
(in any dimension) are smooth isotropic. In particular, N.T. Zung gave a talk on
this subject at IMPA in 2009,  where a method for studying the topology of singular
fibers of the Gelfand-Cetlin system using the complex ellipsoids was presented
and some results were announced\footnote{See \url{http://strato.impa.br/videos/workshop\_geometry/geometry\_070809\_03.avi}; this
talk contains some good ideas and some errors.}. 

Inspired by ideas coming from the Gelfand-Cetlin system,
D. Bouloc proved in 2015 \cite{bouloc2015singular} a similar conjecture 
for all singular fibers of the Kapovich-Millson system of bending flows of 3D polygons \cite{kapovich1996symplectic}, and also of another
similar integrable system studied by Nohara and Ueda
\cite{nohara2014toric} on the 2-Grassmannian manifold.
Namely, he showed that all these fibers are
isotropic submanifolds  (if the ambient symplectic variety itself is a manifold) or orbifolds (in special situations when the ambient symplectic spaces are orbifolds but not manifolds). Remark that these systems of Kapovich-Millson
and Nohara--Ueda can also be obtained by toric degenerations
(see Foth--Hu \cite{FH_ToricDegeneration2005}). 

Encouraged by the results of \cite{bouloc2015singular}, we have a more general conjecture about the singular fibers of integrable systems which can be obtained via toric degenerations, and have worked out the case of 
Gelfand-Cetlin system for the present paper. In particular,
we will show in this paper that 
every singular fiber of the Gelfand--Cetlin system
 on coadjoint orbits of unitary groups is a smooth isotropic submanifold which is diffeomorphic
 to a $2$-stage quotient of a compact Lie group by free actions of two other compact Lie groups
 (Theorem \ref{t:GC_fibers_are_manifolds} and Corollary 
 \ref{cor:isotropic}). In many cases,
 these singular fibers can be shown to be homogeneous spaces or even diffeomorphic to compact Lie groups. We also give a combinatorial formula for computing the dimensions of all singular fibers (Proposition \ref{prop:dimension}), 
 and give a detailed description of these singular fibers in many cases,  including the so-called
 (multi-)diamond singularities. These (multi-)diamond singular fibers are degenerate for the Gelfand--Cetlin system, but they are Lagrangian submanifolds diffeomorphic to direct products of special
unitary groups and tori. 

We remark that Cho, Kim and Oh in a recent preprint \cite{CKO_GC2018} 
already proved the smooth isotropic character of the
singular fibers of the Gelfand-Cetlin system 
and gave a combinatorial formula for the dimensions of these fibers.
We found out \cite{CKO_GC2018} by chance during the preparation of our
paper. Their paper and ours are independent, and 
complement each other,
because the motivations  are completely different 
(Cho, Kim and Oh came to the problem from the
point of view of pure symplectic geometry and 
mirror symmetry,
while our project was motivated by problems coming from
dynamical systems), and the methods used are also very
different. In particular, in our work we use the variety of 
complete flags of complex ellipsoids
which is not present in \cite{CKO_GC2018}. It is precisely
a kind of duality between such a variety of complete flags of 
complex ellipsoids and a coadjoint orbit of the unitary group 
(see Figure \ref{fig:cd_change_of_pov}) that gives us a geometric
description of the fibers of the Gelfand-Cetlin system. 

We notice that some related partial results on the topology of collective integrable systems 
have been obtained by Lane in \cite{lane}.
We remark also that, even though the singularities of the Gelfand-Cetlin system are rather special from the point of 
view of general integrable Hamiltonian systems, there are still many similarities with other singularities that
we encountered before. In particular, there is still a topological decomposition into direct products of simpler
singularities, as will be seen in Section 5. One can also talk about the (real) \textit{toric degree} of these singularities 
(see \cite{Zung_Integrable2016} and references therein for the notion of toric degree), a topic that we will
not discuss in this paper. 

\subsection*{Organization of this paper} 

The rest of this paper is organized as follows:

In section 2 we give the basic notions of Gelfand--Cetlin systems. 

In Section 3 we construct a variety of
complete flags of complex ellipsoids, which is
dual to a symplectic phase space of the Gelfand-Cetlin
system (i.e., a coadjoint oorbit of the unitary group),
and ``move'' the Gelfand-Cetlin from the coadjoint orbit
to a ``dual system'' on the variety of complete flags of complex ellipsoids. The ``dual Gelfand-Cetlin momentum map'' on this ``dual Grassmannian'' also consists of eigenvalue functions of appropriate matrices, and the two
momentum maps (the Gelfand-Cetlin momentum map and its
``dual map'' have the same image). 

In Section 4 we study the symmetry group of an ellipsoid flag with fixed eigenvalues, which gives a new geometrical interpretation of the Gelfand--Cetlin system, and use this machinery to define the symmetry groupoid of ellipsoid flags. These objects yield a good understanding of the geometry of the Gelfand--Cetlin system: The main result of this section is Theorem \ref{t:GC_fibers_are_manifolds} where we prove that the fibers of Gelfand--Cetlin system are smooth submanifolds and identify these manifolds with a quotient manifold constructed using this symmetry groupoid of ellipsoid flags. 

Based on the results of Section 3 and Section 4, in Section 5 we give a combinatorial formula for the dimensions of the fibers of the Gelfand-Cetlin system, 
and show a necessary and sufficient condition for a fiber
to be of maximal possible dimension, or equivalently, to
be a Lagrangian submanifold (Proposition \ref{prop:dimension}). We also give many concrete examples
of the fibers, together with their topological description. In particular, there is an interesting
family of degenerate singular fibers, which we call 
\textit{(multiple-)diamond singularities} because the equalities
in the corresponding Gelfand-Cetlin triangles form 
``diamonds'', and which are Lagrangian submanifolds
diffeomorphic to compact Lie groups of the type
\begin{equation}
SU(l_1) \times \hdots \times SU(l_s) \times \mathbb{T}^{N- \sum (l_i^2+1)},
\end{equation}
where $s$ is the number of diamonds and 
$l_1,\hdots, l_s$ are their sizes. A particular case
of diamond singularities worked out in detail in this section is the case of a degenerate fiber diffeomorphic
to the 3-dimensional sphere $S^3 \approx  SU(2) $ in $\mathfrak{u}(3)^*$, which has been studied
before in Alamiddine's thesis \cite{alamiddine2009GelfandCeitlin}.

Finally, in Section 6 we prove the isotropic character of the fibers of the Gelfand-Cetlin theorem 
(Proposition \ref{prop:IsotropicGC}) by direct computations.  We believe that this is a general phenomenon,
which should be true not only for the Gelfand-Cetlin system and the systems studied by Bouloc
\cite{bouloc2015singular}, but also for many other systems obtained via the toric degeneration method
as well. In this relation, we present in Proposition  \ref{prop:isotropic}
a general sufficient condition for the isotropicness of the singular
fibers of an integrable Hamiltonian system. Unfortunately, this proposition is
still not general enough: there are some degenerate singular fibers of the Gelfand-Cetlin system (especially
those which are Lagrangian) which do not satisfy the conditions of this last proposition. So one will need 
a more general proposition in order to avoid direct case-by-case computations.
    
\section{The Gelfand--Cetlin system on coadjoint orbits of $\uu(n)^\ast$}

\subsection{Coadjoint orbits of $\uu(n)^\ast$} 

Consider the \textit{unitary group}  
\begin{equation}
U(n) = \{ M \in \matrices{n} \mid M M^\ast = I_n = M^\ast M \},
\end{equation}
and its Lie algebra
\begin{equation}
\uu(n) = \{ B \in \matrices{n} \mid B + B^\ast = 0 \}. 
\end{equation}

The dual space $\uu(n)^\ast$ is identified with the space $\Hermitian{n} = \sqrt{-1} \uu(n)$ of \textit{Hermitian matrices} of size $n$ via the map $\varphi : \Hermitian{n} \rightarrow \uu(n)^\ast$ defined for all $A \in \Hermitian{n}$ and $B \in \uu(n)$ by
\begin{equation}
\label{eqn:SymplecticForm}
\varphi(A)(B) = - \sqrt{-1}  \tr(AB). 
\end{equation}

Under this identification, the coadjoint representation of $U(n)$ is simply the matrix conjugation:  $\Ad_C^\ast(A) = CAC^\ast$ for all $C \in U(n)$ and $A \in \Hermitian{n}$.
The spectrum of a Hermitian matrix is real.
Therefore in this paper, by a coadjoint orbit of $U(n)$ we always mean its 
identification with the set $\orbit{\lambda} \subset \Hermitian{n}$ 
of all Hermitian matrices with a fixed real spectrum $\lambda = (\lambda_1, \lambda_2, \hdots,\lambda_n) \in \mathbb{R}^n$ with $\lambda_1 \geq \lambda_2 \geq \hdots \geq \lambda_n$.

Each coadjoint orbit $\orbit{\lambda}$ is equipped with a natural symplectic form, 
called the \textit{\textbf{Kirillov--Kostant--Souriau form}}, of which we recall here the expression. Note that if $A \in \orbit{\lambda}$, then the tangent space of the coadjoint orbit at $A$ can be written as
\begin{equation}
T_A \orbit{\lambda} = \{ [B, A] \mid B \in \uu(n) \},
\end{equation}
and the Kirillov--Kostant--Souriau form $\omega$ on $\orbit{\lambda}$ take the form 
\begin{equation}\omega_A([H_1, A], [H_2, A]) = \sqrt{-1}  \tr(A [H_1, H_2]) 
\end{equation}
for $H_1, H_2 \in \uu(n)$. Moreover, each coadjoint orbit $\orbit{\lambda}$ 
is diffeomorphic to a model given by the following proposition.

\begin{proposition}[{\cite[Prop. II.1.15]{audin2012torus}}]
\label{p:model_for_coadjoint_orbits}
	Suppose that the spectrum $\lambda$ consists of $k$ distinct eigenvalues 
of respective multiplicities $n_1, \dots, n_k$, that is
\begin{equation}
\underbrace{\lambda_1 = \cdots = \lambda_{d_1}}_{n_1} > 
    \underbrace{\lambda_{d_1 + 1} = \cdots = \lambda_{d_2}}_{n_2} > \cdots > \underbrace{\lambda_{d_{k-1} + 1} = \cdots = \lambda_n}_{n_k}. 
\end{equation}
	Then the coadjoint orbit $\orbit{\lambda}$ is diffeomorphic to the 
    homogeneous space $U(n)/U(\lambda)$, where
\begin{equation} 
U(\lambda) \approx U(n_1) \times \cdots \times U(n_k) 
\end{equation}	is the subgroup of block-diagonal unitary matrices of size $n$, with diagonal blocks of respective sizes $n_1, \dots, n_k$.
\end{proposition}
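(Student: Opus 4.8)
The plan is to realize $\orbit{\lambda}$ as a single orbit of the conjugation (coadjoint) action of $U(n)$ on $\Hermitian{n}$ and then apply the orbit--stabilizer theorem in the smooth category. Fix the diagonal representative $\Lambda = \diag(\lambda_1, \dots, \lambda_n) \in \orbit{\lambda}$. Since every Hermitian matrix is unitarily diagonalizable and its spectrum counted with multiplicities is a complete invariant for conjugation by unitary matrices, the map $C \mapsto C \Lambda C^\ast = \Ad^\ast_C(\Lambda)$ from $U(n)$ onto $\orbit{\lambda}$ is surjective; in other words, the conjugation action of $U(n)$ on $\orbit{\lambda}$ is transitive, so $\orbit{\lambda}$ is a homogeneous space.

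Next I would identify the stabilizer of $\Lambda$. A unitary matrix $C$ satisfies $C\Lambda C^\ast = \Lambda$ if and only if $C\Lambda = \Lambda C$, i.e.\ $C$ preserves each eigenspace of $\Lambda$. Decompose $\C^n = E_1 \oplus \cdots \oplus E_k$ as the orthogonal sum of the eigenspaces ordered by decreasing eigenvalue, with $\dim_\C E_j = n_j$; because the equal eigenvalues occur in consecutive positions, $E_j$ is spanned by the standard basis vectors in positions $d_{j-1}+1, \dots, d_j$. A unitary $C$ commutes with $\Lambda$ exactly when it is block diagonal with respect to this decomposition, and each block is an arbitrary element of $U(n_j)$. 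Hence the stabilizer of $\Lambda$ is precisely the subgroup $U(\lambda) \approx U(n_1) \times \cdots \times U(n_k)$ of block-diagonal unitary matrices described in the statement.

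Finally, the orbit map $\pi : U(n) \to \orbit{\lambda}$, $C \mapsto C\Lambda C^\ast$, is smooth and right $U(\lambda)$-invariant, so it descends to a smooth bijection $\bar\pi : U(n)/U(\lambda) \to \orbit{\lambda}$, and one concludes $\orbit{\lambda} \approx U(n)/U(\lambda)$. The one point that requires a little care --- and which I regard as the only nontrivial step --- is upgrading the continuous bijection $\bar\pi$ to a diffeomorphism rather than a mere homeomorphism. This can be done by checking that $d\pi$ at the identity has image $T_\Lambda\orbit{\lambda} = \{ [B,\Lambda] : B \in \uu(n) \}$ and kernel equal to the Lie algebra of $U(\lambda)$ (the skew-Hermitian matrices commuting with $\Lambda$), so that $d\bar\pi$ is injective at the base point and hence, by equivariance, everywhere; a bijective immersion out of a compact manifold is automatically a diffeomorphism. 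Alternatively one may simply invoke the general theorem that orbits of smooth actions of compact Lie groups are embedded submanifolds on which the orbit map induces a diffeomorphism from $G/G_x$. Everything else in the argument is routine linear algebra.
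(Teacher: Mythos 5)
Your argument is correct and is the standard orbit--stabilizer proof; the paper itself gives no proof but simply cites Audin (Prop.\ II.1.15), where essentially this same argument appears. Your attention to the smooth-structure step (bijective equivariant immersion from a compact homogeneous space, or the general theorem on orbits of compact group actions) is exactly the right point to flag, and the stabilizer computation correctly uses that equal eigenvalues occupy consecutive positions so the eigenspaces are coordinate subspaces.
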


From the above proposition we obtain that the orbit $\orbit{\lambda}$ has dimension $n^2 - (n_1^2 + \cdots + n_k^2)$. In particular, we say that $\lambda$ (and the corresponding orbit $\orbit{\lambda}$) is \emph{generic} if all the eigenvalues are distinct, that is
\begin{equation}\lambda_1 > \cdots > \lambda_n. \end{equation}
In this case, $U(\lambda)$ is a $n$-dimensional torus $\T^n = U(1)^n$ and the generic orbit $\orbit{\lambda}$ has maximal dimension $n^2 - n = n(n-1)$.

\subsection{The Gelfand--Cetlin system}

Fix a spectrum $\lambda = (\lambda_1 \geq \cdots \geq \lambda_n)$. For any matrix $A \in \orbit{\lambda}$ and any integer $1 \leq k \leq n$, denote by $A_k$ the upper-left submatrix of $A$ of size $k \times k$. Since $A_k$ is a Hermitian matrix of size $k$, it admits $k$ real eigenvalues
\begin{equation}F_{1,k}(A) \geq \cdots \geq F_{k,k}(A). \end{equation}

The family of functions
\begin{equation}
F = \{ F_{i,j} : \orbit{\lambda} \rightarrow \R \mid 1 \leq i \leq j < n \}
\end{equation}
satisfies the \textit{\textbf{Gelfand--Cetlin triangle (diagram) of inequalities}} \cite[Prop. 5.3]{GS83gelfand} shown in Figure~\ref{f:gelfand_cetlin_diagram}.

\begin{figure}[!ht]
	\centering
	\begin{tikzpicture}[y=-1cm, x=0.8cm]
	\newcommand{\nodet}[2]{\node at (#1) {$#2$};}
	\newcommand{\nodel}[2]{\node at (#1, 0) {$\lambda_{#2}$};}
	\newcommand{\nodem}[3]{\node at (#1) {$F_{#3, #2}$};}
	\newcommand{\nodef}[1]{\node[rotate=-45] at (#1) {$\geq$};}
	\newcommand{\noder}[1]{\node[rotate=45] at (#1) {$\geq$};}
	
	\nodel{0}{1} \nodet{1,0}{\geq} \nodel{2}{2} \nodet{3,0}{\geq} \nodel{4}{3} \nodet{5,0}{\geq} \nodet{6,0}{\cdots}	\nodet{7,0}{\geq} \nodel{8}{n-1} \nodet{9,0}{\geq} \nodel{10}{n}
	
	\nodem{1,1}{n-1}{1} \nodem{3,1}{n-1}{2} \nodem{5,1}{n-1}{3} \nodet{7,1}{\cdots} \nodem{9,1}{n-1}{n-1}
	\nodef{0.5,0.5} \noder{1.5,0.5} \nodef{2.5,0.5} \noder{3.5,0.5} \nodef{4.5,0.5} \noder{5.5,0.5} \nodef{8.5,0.5} \noder{9.5,0.5} 
	\nodem{2,2}{n-2}{1} \nodem{4,2}{n-2}{2} \nodet{6,2}{\cdots} \nodem{8,2}{n-2}{n-2}
	\nodef{1.5,1.5} \noder{2.5,1.5} \nodef{3.5,1.5} \noder{4.5,1.5} \nodef{7.5,1.5} \noder{8.5,1.5}
	\node[rotate=-45] at (3, 3) {$\cdots$}; \node[rotate=45] at (7, 3) {$\cdots$};
	\nodef{3.5,3.5} \noder{4.5,3.5} \nodef{5.5,3.5} \noder{6.5,3.5}
	\nodem{4,4}{2}{1} \nodem{6,4}{2}{2}
	\nodef{4.5,4.5} \noder{5.5,4.5}
	\nodem{5,5}{1}{1}
	\end{tikzpicture}
	\caption{The Gelfand--Cetlin triangle (diagram).}
	\label{f:gelfand_cetlin_diagram}
\end{figure}
Moreover, these functions commute pairwise under the Poisson bracket induced by the Kirillov--Kostant--Souriau form on $\orbit{\lambda}$, and we have the following theorem due to
Guillemin and Sternberg \cite{GS83collective}:

\begin{theorem}[{\cite{GS83collective}}]
	On any coadjoint orbit $\orbit{\lambda}$, the non-constant functions in $F$ define a completely integrable Hamiltonian system. The singular values of this system are the values for which there is a non-trivial equality in the Gelfand--Cetlin diagram on Figure \ref{f:gelfand_cetlin_diagram}.
\end{theorem}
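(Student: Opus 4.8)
The plan is to establish, in order: that the functions in $F$ pairwise Poisson commute; that among the non-constant ones there are exactly $\tfrac12\dim\orbit{\lambda}$ functionally independent ones on a dense open set; and that the singular values are precisely those with a non-trivial equality in the diagram.

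\emph{Poisson commutativity.} First I would replace the eigenvalue functions $F_{i,k}$, which are merely continuous where $A_k$ has a repeated eigenvalue, by the elementary symmetric functions $\sigma_{i,k}$ of the eigenvalues of $A_k$: these are polynomials in the entries of $A$, hence globally smooth on $\orbit{\lambda}$, and they have the same level sets and the same flows as the $F_{i,k}$ wherever the latter are differentiable. Embedding $U(k)$ in $U(n)$ as the upper-left block, the conjugation action of $U(k)$ on $\orbit{\lambda}$ has momentum map $\mu_k\colon\orbit{\lambda}\to\uu(k)^\ast\cong\Hermitian{k}$, $A\mapsto A_k$, and $\sigma_{i,k}=s_i\circ\mu_k$ for an $\Ad_{U(k)}$-invariant polynomial $s_i$ on $\uu(k)^\ast$. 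For $k\le k'$ one has $\mu_k=\pi\circ\mu_{k'}$ with $\pi\colon\uu(k')^\ast\to\uu(k)^\ast$ the Poisson projection dual to $\uu(k)\hookrightarrow\uu(k')$, so the bracket formula for collective functions gives
\[
\{\sigma_{i,k},\sigma_{j,k'}\}=\{s_i\circ\pi,\;s_j\}_{\uu(k')^\ast}\circ\mu_{k'}.
\]
Since $s_j$ is $\Ad_{U(k')}$-invariant it is a Casimir of the Lie--Poisson bracket on $\uu(k')^\ast$, so the right-hand side vanishes; hence all the $\sigma_{i,k}$, and therefore all the $F_{i,k}$, pairwise Poisson commute.

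\emph{Counting and independence.} Because $F_{i,n}\equiv\lambda_i$, the non-constant functions lie among the $F_{i,j}$ with $1\le i\le j\le n-1$. For generic $\lambda$ all $\binom{n}{2}=\tfrac12\dim\orbit{\lambda}$ of these are non-constant, and I would prove their independence on the open dense set $U\subset\orbit{\lambda}$ on which every inequality in Figure~\ref{f:gelfand_cetlin_diagram} is strict. On $U$ each $A_k$ has simple spectrum (strictness of two consecutive interlacing inequalities forces $F_{i-1,k}>F_{i,k}$), so the $F_{i,k}$ are smooth there, and the Hamiltonian flow of each is periodic: on $\uu(k)^\ast$ the differential of the $i$-th eigenvalue at a matrix with simple spectrum is the corresponding spectral projector $P$, whose flow conjugates by $\exp(t\sqrt{-1}\,P)$ and is $2\pi$-periodic. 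These flows assemble into a Hamiltonian action on $U$ of a torus of dimension equal to the number of non-constant functions, and the crux is that this action is locally free on $U$; I would prove this by induction on $n$ using the fibration $A\mapsto A_{n-1}$ and the strict interlacing to force any stabilizer element to act trivially on each successive eigenspace. Local freeness then makes the differentials $dF_{i,k}$ linearly independent on all of $U$, so with the count and the commutativity above we get complete integrability. (For non-generic $\lambda$ one runs the same argument on the non-constant subfamily of $F$ and on the open set where all inequalities not forced by repetitions in $\lambda$ are strict; the count $\tfrac12\dim\orbit{\lambda}$ remains valid by an elementary combinatorial identity.)

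\emph{Singular values.} The image of the momentum map is exactly the polytope cut out by the inequalities of Figure~\ref{f:gelfand_cetlin_diagram}: ``$\subseteq$'' is the Cauchy interlacing theorem applied to each pair $(A_{k-1},A_k)$, and ``$\supseteq$'' follows from a standard inductive bordering construction producing a matrix with prescribed interlacing data (equivalently, from Guillemin--Sternberg convexity). If a value $c$ satisfies all the inequalities strictly, its fiber lies in $U$, where the momentum map is a submersion, so $c$ is a regular value with Lagrangian-torus fiber. Conversely, if $c$ realizes a non-trivial equality, then at every point of its fiber either some $A_k$ has a repeated eigenvalue, making the differentials $d\sigma_{1,k},\dots,d\sigma_{k,k}$ linearly dependent, or an eigenvalue of $A_{k-1}$ coincides with one of $A_k$; in either case a direct rank computation shows $d\mu$ drops rank, so $c$ is a singular value. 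Hence the singular values are precisely the values with a non-trivial equality.

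The hard part is the local freeness of the Gelfand--Cetlin torus action on $U$: commutativity is essentially formal, and the description of the singular values is bookkeeping once independence on $U$ is in hand, but showing that the $\binom{n}{2}$ eigenvalue functions are genuinely independent wherever all interlacing inequalities are strict requires controlling the mutual position of the eigenspaces of the nested submatrices $A_1,A_2,\dots,A_{n-1}$, for which the cleanest route is the induction on $n$ that peels off the last row and column.
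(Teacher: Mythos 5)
The paper does not actually prove this theorem: it is imported verbatim from Guillemin--Sternberg \cite{GS83collective}, and the only verification supplied in the text is the count that the non-constant $F_{i,j}$ number exactly half of $\dim\orbit{\lambda}$. So the comparison is with the classical proof rather than with anything in the paper, and your argument is precisely that classical proof via Thimm's method. The commutativity step (collective functions for the nested momentum maps $\mu_k:A\mapsto A_k$, plus the Casimir property of invariant functions) is correct and complete as written, and the spectral-projector description of the flows on the strict-inequality locus $U$ is the standard route to the torus action. Two steps remain sketches, but both strategies work. For local freeness on $U$: writing a stabilizer element of the infinitesimal action as $B=B'\oplus 0$ with $B'$ built from spectral projectors of $A_1,\dots,A_{n-1}$, the condition $[B,A]=0$ forces $[B',A_{n-1}]=0$ and $B'a=0$, where $a$ is the off-diagonal part of the last column of $A$; strict interlacing guarantees $a$ has a nonzero component in every (one-dimensional) eigenspace of $A_{n-1}$, so $B'$, being diagonal in that eigenbasis, vanishes on the top level and your induction closes. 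For the converse direction on singular values, be explicit about which smooth momentum map you differentiate: a horizontal equality makes $d\sigma_{1,k},\dots,d\sigma_{k,k}$ dependent because their span at $A$ has dimension equal to the number of distinct eigenvalues of $A_k$, whereas a purely diagonal equality (say $F_{i,k-1}=F_{i,k}$ with all horizontal inequalities strict) leaves every component smooth, and the rank drop must be exhibited directly --- e.g.\ by observing that such an equality forces the corresponding component of the bordering vector of $A_k$ over the eigenbasis of $A_{k-1}$ to vanish, which kills the Hamiltonian vector field direction transverse to the fiber. Neither point is a gap in strategy, only in execution; the proposal is a faithful reconstruction of the Guillemin--Sternberg argument that the paper chose to cite rather than reprove.
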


Indeed, the family $F$ contains $n(n-1)/2$ functions, which is precisely half the dimension of a generic coadjoint orbit. If the coadjoint orbit $\orbit{\lambda}$ is not generic because some of the eigenvalues $\lambda_1, \dots, \lambda_n$ are equal, then the Gelfand--Cetlin inequalities imply that some of the functions in $F$ are constant. But one can check that the number of remaining non-constant functions is, again, half the dimension of $\orbit{\lambda}$. 

Remark that we can also define the functions $F_{1,n}, \hdots, F_{n,n}$
on $\mathfrak{u}(n)^*$
in the same way; they will be constant on $\orbit{\lambda}$:
$F_{1,n} = \lambda_1, \hdots, F_{n,n} = \lambda_n$.

By abusing the notation, for any given spectrum $\lambda$ we will call \textit{\textbf{momentum map}} of the Gelfand--Cetlin system, or \textbf{\textit{Gelfand-Cetlin map}}, the 
following so-called \textit{collective function}, which will be denoted by the same letter $F$:
is given:
\begin{equation}
F = (F_{i,j})_{1 \leq i \leq j < n} : 
\orbit{\lambda} \rightarrow \R^{n(n-1)/2}.
\end{equation}

Even though this map has redundant components when $\lambda$ is non-generic, it has the same level sets as the usual momentum map of the Gelfand--Cetlin integrable system, which is enough for the matter of the present paper.

\section{Geometric interpretation of the fibers}

Fix an ordered real spectrum $\lambda = (\lambda_1 \geq \dots \geq \lambda_n)$, and denote by 
\begin{equation}
D_\lambda = \diag(\lambda_1, \dots, \lambda_n) \in \orbit{\lambda}
\end{equation}
the diagonal matrix with entries $\lambda_1, \dots, \lambda_n$. We have a natural projection $\pi : U(n) \rightarrow \orbit{\lambda}$ given by 
\begin{equation}
\pi(C) = C D_\lambda C^\ast. 
\end{equation}

\subsection{Partial and complete flag manifolds}

Recall that a \emph{flag} in $\C^n$ is a sequence of vector subspaces
\begin{equation}
V^\bullet = ( \{ 0 \} = V^0 \subset V^1 \subset \cdots \subset V^k = \C^n ) 
\end{equation}
with increasing dimensions $0 = d_0 < d_1 < \cdots < d_k = n$ (we will often omit $V^0$). The $k$-tuple $(d_1, \dots, d_k)$ is called the \emph{signature} of the flag $V^\bullet$. A flag is \emph{complete} if $k = n$ and $(d_1, d_2, \dots, d_k) = (1, 2, \dots, n)$, otherwise it is \emph{partial}.

Fix a signature $d = (d_1, \dots, d_k)$. One can associate to any flag $V^\bullet$ with signature $d$ a basis $(u_1, \dots, u_n)$ of $\C^n$ such that for any $1 \leq i \leq k$, the space $V^i$ is generated by the first $d_i$ vectors of this basis. Up to a Gram--Schmidt process, the basis $(u_1, \dots, u_n)$ can be supposed unitary, we then identify it with the matrix $C \in U(n)$ with columns $u_1, \dots, u_n$. Conversely, any matrix $C \in U(n)$ represents a unique flag $V^\bullet$ 
with signature $d$ such that for any $1 \leq i \leq k$, $V^i$ 
is generated by the first $d_i$ columns of $C$. 

Note that two matrices $C_1, C_2$ represent the same flag if and only if there exists a block diagonal matrix $P \in U_d = U(n_1) \times \cdots \times U(n_k)$ such that $C_2 = C_1 P$, where $n_i = d_i - d_{i-1}$ (with convention $d_0 = 0$). It is then standard to identify the set of all flags with signature $d$ in $\C^n$ with the homogeneous space
\begin{equation}
\Fcal_d = U(n)/U_d =  U(n)/(U(n_1) \times \cdots \times U(n_k)). 
\end{equation}

In particular, the set of complete flags in $\C^n$ is identified with $U(n)/\T^n$, 
where $\mathbb{T}^n = U(1) \times \cdots \times U(1)$ is the usual $n$-dimensional torus.

\begin{remark}
	In particular, Proposition~\ref{p:model_for_coadjoint_orbits} states that every coadjoint orbit $\orbit{\lambda}$ is diffeomorphic to a flag manifold $\Fcal_d$, with signature $d$ determined by the redundancies among the values $\lambda_1, \dots, \lambda_n$.
\end{remark}

\subsection{Complex ellipsoids}

Let $V$ be a finite-dimensional (complex) vector space with a Hermitian product $\dotprod{.}{.}$ (the convention chosen in this paper for a Hermitian product is to be linear in the first variable and anti-linear the second variable). Recall that a linear transformation $\alpha : V \rightarrow V$ is called \emph{Hermitian} if $\dotprod{\alpha(v_1)}{v_2} = \dotprod{v_1}{\alpha(v_2)}$ for any $v_1, v_2 \in V$,
i.e., $\alpha^\ast = \alpha$. A Hermitian transformation is diagonalizable in a unitary basis: there exists a basis $(v_1, \dots, v_k)$ of $V$, with $\dotprod{v_i}{v_j} = \delta_{i,j}$, such that $\alpha(v_i) = \gamma_i v_i$. Moreover, its eigenvalues $\gamma_1, \dots, \gamma_k$ are real numbers. In particular, for any $v = x_1 v_1 + \cdots + x_k v_k$ in $V$ we have
\begin{equation}\dotprod{\alpha(v)}{v} = 1 \iff \gamma_1 \abs{x_1}^2 + \cdots + \gamma_k \abs{x_k}^2 = 1. 
\end{equation}

By analogy with the real Euclidean case, we will set the following definition:
\begin{definition}
	A \textbf{complex ellipsoid} in $V$ is a subset of the form
\begin{equation}
E_\alpha = \{ v \in V \mid \dotprod{\alpha(v)}{v} = 1 \}
\end{equation}
	where $\alpha : V \rightarrow V$ is a positive definite Hermitian transformation (recall that $\alpha$ is positive definite if $\dotprod{\alpha(v)}{v} > 0$ for any $v \neq 0$, or equivalently if all its eigenvalues are positive).
    If $(v_1, \dots, v_k)$ is a basis of eigenvectors 
for $\alpha$ and $\gamma_1, \dots, \gamma_k$ are the associated eigenvalues,
then we say that $E_\alpha$ has axes $\mathbb{C}v_1, \dots, \mathbb{C}v_k$ and 
radii $1/\sqrt{\gamma_1}$, $\dots$, $1/\sqrt{\gamma_k}$.
\end{definition}

When $V = \C^k$ and $A \in \Hermitian{k}$, we simply write
\begin{equation}
E_A = \{ x \in \C^k \mid \dotprod{Ax}{x} = 1 \}. 
\end{equation}

The complex ellipsoids satisfy the following immediate properties:
\begin{lemma}
	\label{l:properties_ellipsoid}
	Let $\alpha, \beta$ be positive definite Hermitian 
    tranformations of $V$, and $\phi : V \rightarrow W$ a unitary map. Then:
	\begin{enumerate}
		\item for any $v \neq 0$ in $V$, there exists $t > 0$ such that $tv \in E_\alpha$,
		\item $E_\alpha = E_\beta$ if and only if $\alpha = \beta$,
		\item $\phi(E_\alpha) = E_{\phi \circ \alpha \circ \phi^{-1}}$,
		\item if $V=W$, then $\phi$ preserves $E_\alpha$ if and only if $\alpha$ and $\phi$ commute.
	\end{enumerate}
\end{lemma}

\begin{proof}
	Fix $v \neq 0$. Since $\alpha$ is positive definite, $\dotprod{\alpha(v)}{v} > 0$. Then $t = 1/\sqrt{\dotprod{\alpha(v)}{v}}$ is well-defined and we have:
	\[ \dotprod{\alpha(tv)}{tv} = t^2 \dotprod{\alpha(v)}{v} = 1 \]
	which proves (1). But if $E_\alpha = E_\beta$, then we also have $\dotprod{\beta(tv)}{tv} = 1$, hence $\dotprod{\beta(v)}{v} = 1/t^2 = \dotprod{\alpha(v)}{v}$. That is
	\[ \dotprod{(\beta-\alpha)v}{v} = 0 \]
	for all $v \in \C^n$. Since $\beta-\alpha$ is again Hermitian, it is diagonalizable. But the above condition implies that all its eigenvalues are zero, so we conclude that $\alpha=\beta$, which proves (2). The equality (3) comes from:
	\[ \dotprod{\alpha(v)}{v} = \dotprod{\phi \circ \alpha(v)}{\phi(v)} = \dotprod{\phi \circ \alpha \circ \phi^{-1}(\phi(v))}{\phi(v)}. \]
	Then (4) follows from (2) and (3).
\end{proof}

The following proposition deals with the intersection of a complex ellipsoid with a lower dimensional vector subspace.

\begin{proposition}
	\label{p:intersection_ellipsoid-subspace}
	Let $E_\alpha$ be a complex ellipsoid in $V$, and $W, W'$ two linear subspaces of $V$. Then:
	\begin{enumerate}
		\item $E_\alpha \cap W$ is a complex ellipsoid in $W$: there exists a positive definite Hermitian map $\beta : W \rightarrow W$ such that $E_\alpha \cap W = E_\beta$.
		\item In particular, if $V = \C^n$, $W = \C^k \times \{ 0 \}^{n-k}$ then $E_A \cap W = E_{A_k}$.
		\item If $E_\alpha \cap W = E_\alpha \cap W'$, then $W = W'$.
	\end{enumerate}
\end{proposition}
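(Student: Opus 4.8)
The plan is to describe $E_\alpha \cap W$ through the orthogonal projection onto $W$. For part (1), I would let $P_W \colon V \to W$ be the orthogonal projection associated to $\dotprod{.}{.}$ (available since $V$ is finite-dimensional) and set $\beta := P_W \circ (\alpha|_W) \colon W \to W$. The three things to check are all short: that $\beta$ is Hermitian (using that $P_W$ is self-adjoint and restricts to the identity on $W$, so that $\dotprod{\beta(w_1)}{w_2} = \dotprod{\alpha(w_1)}{w_2}$ for $w_1, w_2 \in W$, which is then conjugate-symmetric because $\alpha$ is), that $\beta$ is positive definite (since $\dotprod{\beta(w)}{w} = \dotprod{\alpha(w)}{w} > 0$ for $w \neq 0$ in $W$), and finally that for $w \in W$ one has $\dotprod{\alpha(w)}{w} = \dotprod{\beta(w)}{w}$, whence $E_\alpha \cap W = E_\beta$.

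For part (2), this is just the special case $V = \C^n$, $W = \C^k \times \{0\}^{n-k}$: under the obvious identification $W \cong \C^k$, the projection $P_W$ is truncation to the first $k$ coordinates, and $P_W \circ (A|_W)$ is exactly multiplication by the upper-left block $A_k$, so $\beta = A_k$. Equivalently, one can argue directly that $\dotprod{A(x,0)}{(x,0)} = \dotprod{A_k x}{x}$ for all $x \in \C^k$, and then invoke part (1).

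For part (3), the idea is that $W$ is recovered from $E_\alpha \cap W$ as the union of the complex lines through the origin that it meets. Concretely, by Lemma~\ref{l:properties_ellipsoid}(1) every nonzero $w \in W$ admits $t > 0$ with $tw \in E_\alpha$, and $tw$ also lies in $W$, so $tw \in E_\alpha \cap W$; conversely, every complex scalar multiple of a point of $E_\alpha \cap W$ lies in $W$. Hence $W = \{0\} \cup \{ \mu v \mid \mu \in \C,\ v \in E_\alpha \cap W \}$, and likewise for $W'$, so the hypothesis $E_\alpha \cap W = E_\alpha \cap W'$ immediately forces $W = W'$.

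There is no serious obstacle here; the argument is elementary. The only step that deserves a moment of care is confirming that $\beta = P_W \circ (\alpha|_W)$ is genuinely Hermitian, which is exactly where the self-adjointness of the orthogonal projection $P_W$ is used. Everything else is a direct computation or a reuse of Lemma~\ref{l:properties_ellipsoid}(1).
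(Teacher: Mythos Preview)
Your proof is correct. Part (3) is exactly the paper's argument: use Lemma~\ref{l:properties_ellipsoid}(1) to rescale any nonzero $w\in W$ onto $E_\alpha\cap W = E_\alpha\cap W'$ and conclude $W\subset W'$, then argue symmetrically.

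The one genuine difference is in the order and style of (1) and (2). The paper first proves (2) by the direct computation $\dotprod{A\,i(x)}{i(x)}=\dotprod{A_k x}{x}$, and then \emph{reduces} the abstract statement (1) to (2) by choosing a unitary isomorphism $\phi:\C^n\to V$ carrying $\C^k\times\{0\}^{n-k}$ to $W$, transporting $\alpha$ to a matrix $A$, and setting $\beta=\phi\circ A_k\circ\phi^{-1}$ on $W$. You instead prove (1) intrinsically by taking $\beta=P_W\circ(\alpha|_W)$ and checking the three properties directly, after which (2) is a special case. Your route is slightly cleaner (no auxiliary coordinates), and it makes explicit what $\beta$ is in a coordinate-free way; the paper's route has the minor advantage that it exhibits the matrix $A_k$ as the concrete model, which is what is used downstream. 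Either way the content is the same, and your verification that $\beta$ is Hermitian via self-adjointness of $P_W$ is the right point to flag.
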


\begin{proof}
	For (2), it suffices to remark that $\dotprod{A i(x)}{i(x)}_n = \dotprod{A_k x}{x}_k$ for every $x \in \C^k$, where $i : \C^k \rightarrow \C^k \times \{ 0 \}^{n-k}$ is the canonical identification. Now for (1), choose $\phi : \C^n \rightarrow V$ unitary such that $\phi(\C^k \times \{ 0 \}^{n-k}) = W$. Define $A \in \Hermitian{n}$ by $Ax = \phi^{-1} \circ \alpha \circ \phi(x)$, hence $w \in E_\alpha$ if and only if $\phi^{-1}(w) \in E_A$. Now define $\beta : W \rightarrow W$ by $\beta(w) = \phi(A_k \phi^{-1}(w))$. We obtain $w \in E_\alpha \cap W$ if and only if $\phi^{-1}(w) \in E_A \cap (\C^k \times \{ 0 \}^{n-k})$, which is equivalent to $w \in E_\beta$.
	
	For (3), consider $w \in W$. By Lemma~\ref{l:properties_ellipsoid}, there exists $t > 0$ such that $tw \in E_\alpha$. If $E_\alpha \cap W = E_\alpha \cap W'$ then, $tw$, and hence $w$, lie in $W'$ and so $W \subset W'$. With a symmetric argument we conclude that $W = W'$.
\end{proof}

The above properties motivate the following definitions:
\begin{definition}
	An \textbf{ellipsoid flag} in $\C^n$ is a triple $(E^\bullet, V^\bullet, A)$ where:
	\begin{itemize}
		\item[i)] $V^\bullet$ is a (vector space) flag in $\C^n$,
		\item[ii)] $A$ is a positive definite Hermitian matrix of size $n$,
		\item[iii)] $E^\bullet = ( E^1 \subset E^2 \subset \cdots \subset E^k = E_A )$ 
        is the increasing sequence of ellipsoids defined by $E^k = E_A \cap V^k$.
	\end{itemize}
	The \textbf{signature} of $(E^\bullet, V^\bullet, A)$ is the signature 
    of $V^\bullet$, and we say that $(E^\bullet, V^\bullet, A)$ is \textbf{complete}
    if $V^\bullet$ is complete.
\end{definition}

We will sometimes denote the ellipsoid flag 
$(E^\bullet, V^\bullet, A)$ by $E^\bullet = E_A \cap V^\bullet$, 
or simply by $E^\bullet$.

Recall that, for each $1 \leq k \leq n$, the complex ellipsoid 
$E^k = E_{\alpha_k}$ is defined by a unique positive definite Hermitian 
map $\alpha_k : V^k \rightarrow V^k$. For brevity, we will say that the flag 
$E^\bullet$ is defined by the family $\alpha_\bullet = (\alpha_1, \dots, \alpha_n)$.

\begin{definition}
	Let $(E^\bullet, V^\bullet, A)$ be a complete ellipsoid flag.
We call \textbf{eigenvalues} of $E^\bullet$ the $(n(n+1)/2)$-tuple
\begin{equation}
\Gamma(E^\bullet) = ( \gamma_{i,j}(E^\bullet) \mid 1 \leq j \leq n,\  1 \leq i \leq j ),
\end{equation}
	where for any $1 \leq j \leq n$,
\begin{equation}
\gamma_{1,j}(E^\bullet) \geq \gamma_{2,j}(E^\bullet) \geq \cdots \geq \gamma_{j,j}(E^\bullet) 
\end{equation}
	are the eigenvalues of the defining maps $\alpha_j : V^j \rightarrow V^j$ 
    of $E^\bullet$.
\end{definition}

Fix a positive spectrum $\lambda = (\lambda_1 \geq \dots \geq \lambda_n > 0)$ and consider a matrix $A \in \orbit{\lambda}$. Let $C \in U(n)$ such that $A = \pi(C) = C D_\lambda C^\ast$. As above, for $1 \leq k \leq n$ denote by $A_k$ the upper-left submatrix of $A$ and by $i_k : \C^k \hookrightarrow \C^n$ the canonical inclusion. Consider
\begin{equation}
V_{\std}^\bullet = ( i_1(\C) \subset i_2(\C^2) \subset \cdots \subset i_n(\C^n) )
\end{equation}
the standard complete flag of $\C^n$, and $V_C^\bullet$ its image by the linear map
$x \mapsto C^\ast.x$. By Proposition~\ref{p:intersection_ellipsoid-subspace}, the complete ellipsoid flag $(\tilde{E}_A^\bullet, V_{\std}^\bullet, A)$, and hence its image under $\Phi$ denoted by 
flag $(E_C^\bullet, V_C^\bullet, D_\lambda)$, have eigenvalues
\begin{equation}
F(A) = \{ F_{i,j}(A) \mid 1 \leq i \leq j \leq n \}.
\end{equation}

Note that $V^\bullet_C=Q(C^\ast)$ where $Q : U(n) \rightarrow \Fcal$ is the 
map which sends each nondegenerate matrix to the flag defined by its columns, 
so the ellipsoid flag $E_C^\bullet$ depends on the diagonalization $C$ chosen for $A$.

Introduce the eigenvalue map
\begin{equation}
\begin{array}{rrcl}
	\Gamma_\lambda : & \Fcal & \longrightarrow & \R^N \\
		& V^\bullet & \longmapsto & \Gamma(E_{D_\lambda} \cap V^\bullet)
\end{array} 
\end{equation}
and denote by $t : U(n) \rightarrow U(n)$ the involution $t(C)= C^\ast = C^{-1}$.
Then the above remarks are summed up in the commutative diagram given in Figure~\ref{fig:cd_change_of_pov}. 

\begin{figure} [!ht]
	\centering
	\begin{tikzcd}
		U(n) \arrow[d, "\pi: C \mapsto CD_\lambda C^\ast"'] \arrow[rr,  "t: C \mapsto C^\ast"'] & & U(n) \arrow[d, "Q:\ \text{flag generated by the columns}"] \\[1.5em]
		\orbit{\lambda} \arrow[dr, "F:\ \text{Gelfand-Cetlin map}"'] & & \Fcal \arrow[dl, "\Gamma_\lambda: V^\bullet \longmapsto  \Gamma(E_{D_\lambda} \cap V^\bullet)"] \\
		 & \R^N & 
	\end{tikzcd}	
	\caption{From the Gelfand--Cetlin system to the map $\Gamma_\lambda$ 
    on the flag manifold $\Fcal$.}
	\label{fig:cd_change_of_pov}
\end{figure}

\begin{definition} \label{defn:fiber}
This this paper, by a \textbf{\textit{fiber}},
we will mean either a preimage of the map $F$ on $\orbit{\lambda}$
(i.e. a fiber of the Gelfand-Cetlin system on a given coadjoint orbit), or a 
preimage of the map $\Gamma_\lambda$ in the  diagram in Figure~\ref{fig:cd_change_of_pov}.
\end{definition}

\section{The geometry of ellipsoid flags with fixed eigenvalues}

Fix $\lambda = (\lambda_1 \geq \dots \geq \lambda_n > 0)$ a positive spectrum. As before, denote by $D_\lambda$ the diagonal matrix with entries $\lambda_1, \dots, \lambda_n$, and by $E_{D_\lambda}$ the corresponding ellipsoid in $\C^n$.

\subsection{Symmetry group of an ellipsoid flag} 

Let $V^\bullet$ in $\Fcal$ be a given complete flag and 
$E^\bullet = E_{D_\lambda} \cap V^\bullet$ the ellipsoid flag obtained by  intersecting $V^\bullet$ with the ``standard'' ellipsoid $E_{D_\lambda}$.
 For a fixed $1 \leq k \leq n$, let us describe the subgroup $G_k$ 
 of $U(V^k)$ consisting of those elements which the $k$-dimensional 
 complex ellipsoid $E^k  = E_{\alpha_k} \subset V^k$.
We recall the following result from linear algebra.

\begin{lemma}
	\label{l:commuting_endomorphims}
	Let $V$ be a finite dimensional inner product space, $\alpha : V \rightarrow V$ a positive definite Hermitian transform, and $V = W_1 \oplus \cdots \oplus W_r$ the decomposition of $V$ into eigenspaces of $\alpha$.
	Then the subgroup of unitary transformations $\varphi \in U(V)$ which commute 
    with $\alpha$ is exactly
\begin{equation}
G = U(W_1) \oplus \cdots \oplus U(W_r). 
\end{equation}
\end{lemma}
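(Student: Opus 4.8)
The plan is to prove the two inclusions separately, after first recording the structural fact that makes the right-hand side meaningful. Since $\alpha$ is Hermitian, eigenvectors for distinct eigenvalues are orthogonal, so the decomposition $V = W_1 \oplus \cdots \oplus W_r$ into eigenspaces is an \emph{orthogonal} direct sum. Consequently, a linear map that is block-diagonal with respect to this decomposition and acts unitarily on each block is unitary on all of $V$, which is exactly what allows us to regard $U(W_1) \oplus \cdots \oplus U(W_r)$ as a subgroup of $U(V)$.

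For the inclusion $U(W_1) \oplus \cdots \oplus U(W_r) \subseteq G$: if $\varphi \in U(V)$ satisfies $\varphi(W_i) \subseteq W_i$ for each $i$, then on $W_i$ the transformation $\alpha$ is the scalar $\gamma_i \cdot \mathrm{id}_{W_i}$, so $\alpha \circ \varphi = \gamma_i \varphi = \varphi \circ \alpha$ holds on $W_i$; since this is true on every summand, $\alpha$ and $\varphi$ commute on $V$.

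For the reverse inclusion $G \subseteq U(W_1) \oplus \cdots \oplus U(W_r)$: take $\varphi \in U(V)$ commuting with $\alpha$. For $v \in W_i$ we have $\alpha(\varphi(v)) = \varphi(\alpha(v)) = \gamma_i \varphi(v)$, so $\varphi(v)$ again lies in the $\gamma_i$-eigenspace, i.e.\ $\varphi(W_i) \subseteq W_i$. Because $\varphi$ is injective and $W_i$ is finite-dimensional, $\varphi$ restricts to a bijection of $W_i$, and this restriction preserves the inner product since $\varphi$ does; hence $\varphi|_{W_i} \in U(W_i)$. Therefore $\varphi = \varphi|_{W_1} \oplus \cdots \oplus \varphi|_{W_r}$ belongs to $U(W_1) \oplus \cdots \oplus U(W_r)$, which completes the proof.

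There is no genuine obstacle in this argument; the only points that require a moment's care are the orthogonality of the eigenspaces (without it the notation $U(W_1) \oplus \cdots \oplus U(W_r)$ would not identify with a subgroup of $U(V)$) and the elementary observation that a unitary endomorphism which maps a finite-dimensional subspace into itself actually restricts to a unitary automorphism of that subspace.
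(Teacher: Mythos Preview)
Your proof is correct and complete. The paper itself does not prove this lemma; it is introduced with the phrase ``We recall the following result from linear algebra'' and stated without proof, so your argument supplies exactly the standard verification that the paper omits.
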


\begin{remark}
	In the above expression, by $\varphi = \varphi_1 \oplus \cdots \oplus \varphi_r$ with $\varphi_i \in U(W_i)$ we mean the map $\varphi : V \rightarrow V$ defined by
\begin{equation}
\varphi(v) = \varphi_1(v_1) \oplus \cdots \oplus \varphi_r(v_r) 
\end{equation}
	for any $v = v_1 \oplus \cdots \oplus v_r \in W_1 \oplus \cdots \oplus W_r$.
\end{remark}

This implies the following:
\begin{proposition}
	\label{p:structure_G_k}
	The subgroup of unitary transformations of $V^k$ that preserves $E^k$ is exactly
\begin{equation} G_k = U(W_1) \oplus \cdots \oplus U(W_r) \subset U(V^k), \end{equation}
	where $V^k = W_1 \oplus \cdots \oplus W_r$ is the decomposition of $V^k$ into eigenspaces of $\alpha_k$.
	
	Each $W_i$ has dimension $n_{i}$ determined by the eigenvalues
\begin{equation} \underbrace{\gamma_{1,k} = \cdots = \gamma_{d_1,k}}_{n_1} > \underbrace{\gamma_{d_1 + 1,k} = \cdots = \gamma_{d_2,k}}_{n_2} > \cdots > \underbrace{\gamma_{d_{r-1} + 1,k} = \cdots = \gamma_{k,k}}_{n_r} 
\end{equation}
of $\alpha_k$. (``Horizontal'' equalities in the Gelfand--Cetlin diagram).
\end{proposition}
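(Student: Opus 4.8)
The plan is to obtain the proposition as a formal consequence of the two preceding lemmas, with only some minor bookkeeping in between. First I would recall that, by construction, $E^k = E_{\alpha_k}$ is a complex ellipsoid in the finite-dimensional inner product space $V^k$ (equipped with the Hermitian product inherited from $\C^n$), and that $\alpha_k$ is a positive definite Hermitian transformation of $V^k$. Applying part~(4) of Lemma~\ref{l:properties_ellipsoid} with $V = W = V^k$, $\phi = \varphi$ and $\alpha = \alpha_k$, a unitary transformation $\varphi \in U(V^k)$ preserves $E^k$ if and only if $\varphi$ commutes with $\alpha_k$. Hence $G_k$ is exactly the centralizer of $\alpha_k$ in $U(V^k)$.

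Next I would invoke Lemma~\ref{l:commuting_endomorphims} directly with $\alpha = \alpha_k$: if $V^k = W_1 \oplus \cdots \oplus W_r$ is the decomposition into eigenspaces of $\alpha_k$, then the centralizer of $\alpha_k$ in $U(V^k)$ equals $U(W_1) \oplus \cdots \oplus U(W_r)$. Combining the two steps yields $G_k = U(W_1) \oplus \cdots \oplus U(W_r)$, which is the first assertion of the proposition.

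For the claim about the dimensions, I would observe that the eigenvalues of the defining map $\alpha_k$ are, by definition of $\Gamma$ (Section~3), precisely the numbers $\gamma_{1,k} \geq \cdots \geq \gamma_{k,k}$ filling the $k$-th ``row'' of the Gelfand--Cetlin diagram. Thus $W_i$ is the eigenspace attached to the $i$-th distinct value among these, and $n_i = \dim W_i$ is the multiplicity of that value, i.e. the length of the maximal block of horizontal equalities $\gamma_{d_{i-1}+1,k} = \cdots = \gamma_{d_i,k}$; this is exactly the statement to be proved. I do not expect any genuine obstacle here: the proposition is a direct corollary of Lemmas~\ref{l:properties_ellipsoid} and~\ref{l:commuting_endomorphims}, and the only points requiring (minor) care are verifying that the positivity assumption $\lambda_n > 0$ makes each $\alpha_k$ positive definite, so that $E^k$ is a bona fide ellipsoid and the lemmas apply, and checking that the abstract eigenspace decomposition of $\alpha_k$ matches the combinatorics of equalities in the Gelfand--Cetlin triangle.
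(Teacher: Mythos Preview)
Your proposal is correct and follows essentially the same route as the paper: apply Lemma~\ref{l:properties_ellipsoid}(4) to reduce preservation of $E^k$ to commutation with $\alpha_k$, then invoke Lemma~\ref{l:commuting_endomorphims} to identify the centralizer, and finally read off the multiplicities $n_i$ from the eigenvalue list $\gamma_{1,k} \geq \cdots \geq \gamma_{k,k}$. The paper's own proof is just a terser version of exactly this argument.
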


\begin{proof}
	The first part of the proposition is immediate: by Lemma~\ref{l:properties_ellipsoid}, $\phi \in U(V^k)$ preserves $E_{\alpha_k}$ if and only if $\phi$ commutes with $\alpha_k$. We conclude using Lemma~\ref{l:commuting_endomorphims}.
	Since $V^\bullet$ is in $\Fcal(c)$, $\alpha_k$ has eigenvalues 
    $\gamma_{1,k} \geq \cdots \geq \gamma_{k,k}$, so the numbers $n_i$ correspond indeed to the dimensions of the different eigenspaces.
\end{proof}

For later use, we will need the subgroup $H_{k+1} \subset G_{k+1}$ of unitary transformations of $V^{k+1}$ which preserve not only $E^{k+1}$ but also $V^k$ (or equivalently, $E^k$), and $H'_{k+1}$ the subgroup of transformations in $H_{k+1}$ whose restriction to $L_k$ is the identity,
where $L_k$ denotes the orthogonal complement of $V^k$ in $V^{k+1}$.
Note that $\dim L_k = 1$.

\begin{lemma}
\label{l:alpha_k+1_depends_on_alpha_k}
Fix a vector $\ell \in L_k$ of length 1 and write 
$\alpha_{k+1}(\ell) = w \oplus a \ell$ with $w \in V^k$, $a \in \C$. 
Then for any $v \in V^k$ we have
\begin{equation}
\alpha_{k+1}(v) = \alpha_k(v) \oplus \dotprod{v}{w} \ell
\end{equation}
\end{lemma}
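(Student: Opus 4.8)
The plan is to express everything in terms of the Hermitian form defining the ellipsoids and to exploit the fact that $E^{k+1} \cap V^k = E^k$. First I would recall that $\alpha_{k+1}$ is the unique positive definite Hermitian endomorphism of $V^{k+1}$ with $E_{\alpha_{k+1}} = E^{k+1}$, and similarly $\alpha_k$ on $V^k$; by Proposition~\ref{p:intersection_ellipsoid-subspace}(1) and the uniqueness in Lemma~\ref{l:properties_ellipsoid}(2), the map $\alpha_k$ is precisely the one obtained from $\alpha_{k+1}$ by ``restriction of the quadratic form'' to $V^k$. Concretely, for $v, v' \in V^k$ one has $\dotprod{\alpha_k(v)}{v'} = \dotprod{\alpha_{k+1}(v)}{v'}$ — this is the key bilinear identity. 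One clean way to see this: both sides are Hermitian forms on $V^k$, and by polarization it suffices to check $\dotprod{\alpha_k(v)}{v} = \dotprod{\alpha_{k+1}(v)}{v}$ for all $v \in V^k$, which follows because $t v \in E^k \iff t v \in E^{k+1} \cap V^k$ forces $t^2\dotprod{\alpha_k(v)}{v} = 1 = t^2 \dotprod{\alpha_{k+1}(v)}{v}$ for the appropriate $t>0$ (Lemma~\ref{l:properties_ellipsoid}(1)).

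Next I would use the orthogonal decomposition $V^{k+1} = V^k \oplus L_k$ with $\dim L_k = 1$, fix $\ell \in L_k$ of length $1$, and write the block form of $\alpha_{k+1}$ with respect to this decomposition. For $v \in V^k$, write $\alpha_{k+1}(v) = u \oplus c\,\ell$ with $u \in V^k$ and $c \in \C$. Pairing with an arbitrary $v' \in V^k$ gives $\dotprod{\alpha_{k+1}(v)}{v'} = \dotprod{u}{v'}$, and by the identity from the first step this equals $\dotprod{\alpha_k(v)}{v'}$; since $v'$ is arbitrary in $V^k$, we get $u = \alpha_k(v)$. It remains to identify the scalar $c$. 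Pairing $\alpha_{k+1}(v)$ with $\ell$ gives $c = \dotprod{\alpha_{k+1}(v)}{\ell}$, and here I use that $\alpha_{k+1}$ is Hermitian: $\dotprod{\alpha_{k+1}(v)}{\ell} = \dotprod{v}{\alpha_{k+1}(\ell)} = \dotprod{v}{w \oplus a\ell} = \dotprod{v}{w}$, because $v \perp \ell$. Assembling, $\alpha_{k+1}(v) = \alpha_k(v) \oplus \dotprod{v}{w}\,\ell$, which is the claimed formula.

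The only real subtlety — the ``main obstacle'', though it is mild — is the first step: justifying rigorously that the form of $\alpha_{k+1}$ restricted to $V^k$ really is the form of $\alpha_k$, i.e. that the Hermitian transformation $\beta$ produced abstractly in Proposition~\ref{p:intersection_ellipsoid-subspace}(1) coincides with $\alpha_k$. This is exactly the content of the uniqueness clause in Lemma~\ref{l:properties_ellipsoid}(2) together with the equality of sets $E^{k+1} \cap V^k = E^k = E_{\alpha_k}$, so no new argument is needed; one just has to be careful that ``the Hermitian form on $V^k$ induced by $\alpha_{k+1}$'' and ``$\alpha_k$'' are compared as operators via the same inner product on $V^k$ (the restriction of $\dotprod{\cdot}{\cdot}$), which they are by construction. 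Everything else is linear algebra bookkeeping with the block decomposition and the Hermitian symmetry of $\alpha_{k+1}$.
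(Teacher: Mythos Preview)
Your proof is correct and follows essentially the same route as the paper's: write $\alpha_{k+1}(v)$ in the block decomposition $V^{k+1}=V^k\oplus L_k$, identify the $L_k$-coefficient via the Hermitian symmetry $\dotprod{\alpha_{k+1}(v)}{\ell}=\dotprod{v}{\alpha_{k+1}(\ell)}=\dotprod{v}{w}$, and identify the $V^k$-component with $\alpha_k(v)$ using $E_{\alpha_{k+1}}\cap V^k=E_{\alpha_k}$ together with the uniqueness clause of Lemma~\ref{l:properties_ellipsoid}(2). The only cosmetic difference is that you unwind the uniqueness step via polarization to get the bilinear identity $\dotprod{\alpha_{k+1}(v)}{v'}=\dotprod{\alpha_k(v)}{v'}$, whereas the paper appeals to Lemma~\ref{l:properties_ellipsoid}(2) directly on the operators $\beta$ and $\alpha_k$; these are equivalent.
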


\begin{proof}
	Using the decomposition $V^{k+1} = V^k \oplus L_k$, write $\alpha_{k+1}(v) = \beta(v) \oplus \lambda(v)\ell$ with $\beta : V^k \rightarrow V^k$ and $\lambda : V^k \rightarrow \C$. Since $V^k$ and $L_k$ are orthogonal, we have
	\[ \lambda(v) = \dotprod{\alpha_{k+1}(v)}{\ell} = \dotprod{v}{\alpha_{k+1}(\ell)} = \dotprod{v}{w}. \]
	It remains to show that $\beta = \alpha_k$. To do so, remark that for any $v \in V^k \subset V^{k+1}$,
	\[ \dotprod{\alpha_{k+1}(v)}{v} = \dotprod{\beta(v)}{v} \]
	hence $E_{\alpha_k} = E_{\alpha_{k+1}} \cap V^k = E_{\beta}$. By Lemma~\ref{l:properties_ellipsoid}, it follows that $\alpha_k = \beta$.
\end{proof}

We are now able to describe the groups $H'_{k+1}$ and $H_{k+1}$.

\begin{proposition}
	\label{p:structure_H_k+1}
	Let $V^k = W_1 \oplus \cdots \oplus W_r$ be the decomposition of $V^k$ into the eigenspaces of $\alpha_k$, and
\begin{equation} 
V^{k+1} = W_1 \oplus \cdots \oplus W_r \oplus L_k 
\end{equation}
	the induced decomposition of $V^{k+1}$, where $L_k$ is the orthogonal complement of $V^k$ in $V^{k+1}$. Denote by $\proj_{W_i} : V^{k+1} \rightarrow W_i$ the orthogonal projection on $W_i$. Then the group $H_{k+1}$ of 
unitary maps on $V^{k+1}$ which preserve $E^k$ and $E^{k+1}$ is 
the set of elements 
\begin{equation} \phi_1 \oplus \cdots \oplus \phi_r \oplus \xi.\mathrm{id}_{L_k} \in U(W_1) \oplus \cdots \oplus U(W_r) \oplus U(L_k) 
\end{equation}
	such that for any $1 \leq i \leq n$ and $w_i \in \proj_{W_i}(\alpha_{k+1}(L_k))$ we have $\phi_i(w_i) = \xi w_i$.

The subgroup $H'_{k+1}$ is the set of all above elements with $\xi$ equal to $1$.
\end{proposition}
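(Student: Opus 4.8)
The plan is to analyze the action of an arbitrary element $\phi \in H_{k+1}$ on the decomposition $V^{k+1} = W_1 \oplus \cdots \oplus W_r \oplus L_k$ and extract exactly the stated constraints. First I would note that since $\phi \in H_{k+1}$ preserves both $E^{k+1}$ and $V^k$, it preserves $E^{k+1} \cap V^k = E^k$ as well; by Lemma~\ref{l:properties_ellipsoid}(4) applied on $V^k$, the restriction $\phi|_{V^k}$ commutes with $\alpha_k$, so by Lemma~\ref{l:commuting_endomorphims} it splits as $\phi|_{V^k} = \phi_1 \oplus \cdots \oplus \phi_r$ with $\phi_i \in U(W_i)$. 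Since $\phi$ is unitary and preserves $V^k$, it must also preserve its orthogonal complement $L_k$, and since $\dim L_k = 1$, the restriction $\phi|_{L_k}$ is multiplication by a scalar $\xi \in U(1)$; thus $\phi = \phi_1 \oplus \cdots \oplus \phi_r \oplus \xi\cdot\mathrm{id}_{L_k}$ as claimed. This uses only that $\phi$ preserves $V^k$ and $E^k$, and that it is unitary.

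The second step is to bring in the extra condition that $\phi$ also preserves $E^{k+1}$, again via Lemma~\ref{l:properties_ellipsoid}(4): $\phi$ commutes with $\alpha_{k+1}$. I would evaluate the commutation relation $\phi \circ \alpha_{k+1} = \alpha_{k+1} \circ \phi$ on a unit vector $\ell \in L_k$. Write $\alpha_{k+1}(\ell) = w \oplus a\ell$ with $w \in V^k$, $a \in \C$, as in Lemma~\ref{l:alpha_k+1_depends_on_alpha_k}, and decompose $w = \sum_i w_i$ with $w_i = \proj_{W_i}(w) = \proj_{W_i}(\alpha_{k+1}(\ell))$. On one side, $\phi(\alpha_{k+1}(\ell)) = \sum_i \phi_i(w_i) \oplus \xi a\, \ell$. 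On the other, $\alpha_{k+1}(\phi(\ell)) = \alpha_{k+1}(\xi \ell) = \xi w \oplus \xi a\, \ell = \sum_i \xi w_i \oplus \xi a\, \ell$. The $L_k$-components agree automatically, and comparing the $W_i$-components gives $\phi_i(w_i) = \xi w_i$ for each $i$ — which is precisely the stated constraint for the distinguished vector $w_i \in \proj_{W_i}(\alpha_{k+1}(L_k))$.

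Then I would check the converse: any tuple $(\phi_1, \dots, \phi_r, \xi)$ satisfying $\phi_i(w_i) = \xi w_i$ for all $i$ defines a $\phi$ lying in $H_{k+1}$. Such a $\phi$ is manifestly unitary and preserves $V^k$ (hence $E^k$, since $\phi|_{V^k}$ commutes with $\alpha_k$, which is diagonal on the $W_i$). To see it preserves $E^{k+1}$ it suffices, by Lemma~\ref{l:properties_ellipsoid}(4), to verify $\phi$ commutes with $\alpha_{k+1}$. Using Lemma~\ref{l:alpha_k+1_depends_on_alpha_k}, $\alpha_{k+1}(v) = \alpha_k(v) \oplus \dotprod{v}{w}\ell$ for $v \in V^k$ and $\alpha_{k+1}(\ell) = w \oplus a\ell$; a direct computation of both $\phi\alpha_{k+1}$ and $\alpha_{k+1}\phi$ on $v \in V^k$ and on $\ell$ — using $\phi|_{V^k}$ commutes with $\alpha_k$, the relation $\dotprod{\phi(v)}{w} = \dotprod{\phi(v)}{\sum_i w_i} = \sum_i \dotprod{\phi_i(\proj_{W_i}v)}{w_i}$, unitarity of $\phi_i$, and the hypothesis $\phi_i^{-1}(w_i) = \bar\xi w_i$ — shows the two agree. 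Finally the claim about $H'_{k+1}$ is immediate from the definitions: $\phi \in H'_{k+1}$ means $\phi|_{L_k} = \mathrm{id}$, i.e. $\xi = 1$.

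The main obstacle is the converse direction, specifically verifying that the constraint $\phi_i(w_i) = \xi w_i$ on just the single vectors $w_i = \proj_{W_i}(\alpha_{k+1}(\ell))$ is enough to force $\phi$ to commute with $\alpha_{k+1}$ everywhere — this is where Lemma~\ref{l:alpha_k+1_depends_on_alpha_k} does the real work, since it shows $\alpha_{k+1}$ is completely determined by $\alpha_k$ together with the vector $w$, so the off-diagonal coupling between $V^k$ and $L_k$ only ``sees'' $\phi$ through its action on $w$. One should be slightly careful with the conjugate $\xi$ versus $\xi^{-1}$ (they coincide on $U(1)$) and with the anti-linearity convention for the Hermitian product when moving $\xi$ across $\dotprod{\cdot}{\cdot}$, but no genuine difficulty arises.
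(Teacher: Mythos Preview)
Your proof is correct and follows essentially the same approach as the paper: decompose $\phi$ using Lemma~\ref{l:commuting_endomorphims}, then use Lemma~\ref{l:properties_ellipsoid}(4) to reduce preservation of $E^{k+1}$ to commutation with $\alpha_{k+1}$, and check this commutation on $\ell$ and on vectors of $V^k$ via Lemma~\ref{l:alpha_k+1_depends_on_alpha_k}. The only organizational difference is that the paper handles both directions of the ``iff'' simultaneously by computing $\phi\alpha_{k+1}$ and $\alpha_{k+1}\phi$ on both $\ell$ and an arbitrary $v_i \in W_i$, whereas you extract necessity from $\ell$ alone and then verify sufficiency separately; this is a matter of presentation, not substance.
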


\begin{proof}
	Fix a unit vector $\ell \in L_k$ (the orthonormal complement of $V^k$ in $V^{k+1}$), and write
	\[ \alpha_{k+1}(\ell) = w_1 \oplus \cdots \oplus w_r \oplus a \ell \]
	with $w_i \in W_i$ and $a \in \C$. Then, by Lemma~\ref{l:alpha_k+1_depends_on_alpha_k}, for any $v_i \in W_i$,
	\[ \alpha_{k+1}(v_i) = \alpha_k(v_i) \oplus \dotprod{v_i}{w_1 \oplus \cdots \oplus w_r} \ell = \gamma_{ik} v_i \oplus \dotprod{v_i}{w_i} \ell. \]
	Note that $w_i$ depends of the choice of $\ell \in L_k$, but $\C w_i$ can be determined intrinsically as the subspace $\proj_{W_i}(\alpha_{k+1}(L_k))$.
	
	Let $\phi \in U(V^{k+1})$. Suppose $\phi$ preserves $V^k$, and more precisely $E^k$. Then 
	\[ \phi = \phi_1 \oplus \cdots \oplus \phi_r \oplus \xi.\mathrm{id}_{L_k} \quad \in U(W_1) \oplus \cdots \oplus U(W_r) \oplus U(L_k). \]
	On the other hand $\phi$ preserves $E^{k+1}$ if and only if $\phi$ commutes with $\alpha_{k+1}$.
	For $v_i \in W_i$ we have
	\[ \begin{cases}
		\alpha_{k+1}(\phi(v_i)) = \gamma_{ik} \phi_i(v_i) \oplus \dotprod{\phi_i(v_i)}{w_i} \ell = \gamma_{ik} \phi_i(v_i) \oplus \dotprod{v_i}{\phi_i^\ast(w_i)} \ell, \\
		\phi(\alpha_{k+1}(v_i)) = \gamma_{ik}\phi_i(v_i) \oplus \dotprod{v_i}{w_i}\xi \ell = \gamma_{ik}\phi_i(v_i) \oplus \dotprod{v_i}{\bar{\xi} w_i} \ell.
	\end{cases}
 \]
 Similarly,
 \[ \begin{cases}
	 	\alpha_{k+1}(\phi(\ell)) = \alpha_{k+1}(\xi \ell) = \xi w_1 \oplus \cdots \oplus \xi w_r \oplus \xi a \ell, \\
	 	\phi(\alpha_{k+1}(\ell)) = \phi(w_1 \oplus \cdots \oplus w_r \oplus a \ell) = \phi_1(w_1) \oplus \cdots \oplus \phi_r(w_r) \oplus \xi a \ell.
 \end{cases} \]
	It follows that $\phi$ preserves $E^{k+1}$ if and only if for any $1 \leq i \leq r$, $\phi_i(w_i) = \xi w_i$.
\end{proof}

\begin{remark}
	\label{rem:vertical_equalities}
	The space $\proj_{W_i}(\alpha_{k+1}(L_k))$ is trivial if $\alpha_{k+1}(L_k)$ is orthogonal to $W_i$. In this case, for any $w_i \in W_i$ we have
$\alpha_{k+1}(w_i) = \alpha_k(w_i)$, 
	so $W_i$ is also an eigenspace of $\alpha_{k+1}$ associated to the same eigenvalue, and there must be a vertical equality in the Gelfand--Cetlin diagram
    for $E^\bullet$.
	
	By contraposition, if the eigenvalue $\gamma_{i,k}$ associated to $W_i$ appears only once in the Gelfand--Cetlin diagram, then the space $\proj_{W_i}(\alpha_{k+1}(L_k))$ has necessarily dimension $1$.
	
	However, note that the converse is not true. A vertical inequality in the Gelfand--Cetlin diagram does not imply that the corresponding space $\proj_{W_i}(\alpha_{k+1}(L_k))$ is trivial. 
\end{remark}

\begin{remark}
	If $W'_i$ denotes the orthogonal complement of $W''_i = \proj_{W_i}(\alpha_{k+1}(L_k))$ in $W_i$, then $H_{k+1}$ can be written as the group of all elements of the form
	\begin{multline*}
	(\phi'_1 \oplus \xi.\mathrm{id}_{W''_1}) \oplus \cdots \oplus (\phi'_r \oplus \xi.\mathrm{id}_{W''_r}) \oplus \xi \mathrm{id}_{L_k} \\
	\in (U(W'_1) \oplus U(W''_1)) \oplus \cdots \oplus (U(W'_r) \oplus U(W''_r)) \oplus U(L_k).
	\end{multline*}
	This group is clearly isomorphic to
	\[ U(W'_1) \times \cdots \times U(W'_r) \times U(1) \]
	and each $W'_i$ has codimension at most 1 in $W_i$.
	In particular, the subgroup $H'_{k+1}$ is isomorphic to
	\[ U(W'_1) \times \cdots \times U(W'_r) \]
\end{remark}

Consider the group
\begin{equation}
G(E^\bullet) = G_1 \times \cdots \times G_n, 
\end{equation}
	where for each $1 \leq k \leq n$, $G_k$ is the group of unitary transformations of $V^k$ preserving $E^k = E_{D_\lambda} \cap V^k$.
Consider also the subgroups $H'(E^\bullet) < H(E^\bullet) < G(E^\bullet)$ 
defined by
\begin{equation} H'(E^\bullet) = H'_1 \times \cdots \times H'_n \quad \text{ and } \quad H(E^\bullet) = H_1 \times \cdots \times H_n 
\end{equation}
	where for each $1 \leq k \leq n$, $H_k$ is the group of unitary transformations of $V^k$ preserving both $E^k$ and $E^{k-1}$ (or equivalently, both $E^k$ and $V^{k-1}$) and $H'_k$ is the group of transformations in $H_k$ whose restriction to $(V^{k-1})^{\perp V^{k}}$ is the identity.

\begin{definition}
	\label{def:symmetries_ellipsoid_flag} 
The group $G(E^\bullet)$ is called the \textbf{coarse symmetry group}
of the ellipsoid flag $E^\bullet$.
The quotient manifold
\begin{equation}
S(E^\bullet) = G(E^\bullet)/H'(E^\bullet)
\end{equation}
with respect to the free right $H'(E^\bullet)$-action on $G(E^\bullet)$
defined by
	\begin{equation}
		\label{eq:action_on_symmetry_group}
		\phi \cdot f = (\restrict{(f_2^{-1})}{V^1}{V^1} \circ \phi_1 \circ f_1, \dots, \restrict{(f_n^{-1})}{V^{n-1}}{V^{n-1}} \circ \phi_{n-1} \circ f_{n-1}, \phi_n \circ f_n)
	\end{equation}
for any $\phi \in G(E^\bullet)$, $f \in H'(E^\bullet)$,     
is called the  \textbf{reduced symmetry space} of $E^\bullet$.
\end{definition}

\subsection{Push-forward of flags}

In this subsection, $V^\bullet$ is a complete flag in $\C^n$ and 
$E^\bullet = E_{D_\lambda} \cap V^\bullet$ is its intersection with the standard ellipsoid. 
Fix an element $C^\ast \in U(n)$  such that $Q(C^\ast) = V^\bullet$, i.e. columns $(u_1, \dots, u_n)$ of $C^\ast$
form a unitary basis of $V^\bullet$.

\begin{definition}
	Let $\phi = (\phi_1, \dots, \phi_n)$ in $G(E^\bullet)$.
	\begin{enumerate}
		\item The \textbf{push-forward of $V^\bullet$ by $\phi$} is the complete flag
 \begin{equation}   
        \phi_\ast V^\bullet = ( V_\phi^1 \subset \cdots \subset V_\phi^n = \C^n )         
 \end{equation}   
		defined by
 \begin{equation} 
 \label{eqn:pushV}
V_\phi^k = \phi_n(\phi_{n-1}(\dots\phi_{k}(V^k)\dots))
 \quad  \forall\ 1 \leq k \leq n.
  \end{equation} 
		
		\item The \textbf{push-forward of $C^\ast = (u_1, \dots, u_n)$ by $\phi$} is the family
 \begin{equation} 
 \phi_\ast C^\ast = (u_1^\phi, \dots, u_n^\phi)
   \end{equation}
		defined by
 \begin{equation} 
u^\phi_k = \phi_n(\phi_{n-1}(\dots\phi_{k}(u_k)\dots))  \quad  \forall\ 1 \leq k \leq n.. 
  \end{equation}        
	\end{enumerate}	
\end{definition}

The following proposition shows that the above definition makes sense.

\begin{proposition}
	\label{p:push_forward_commute_with_q}
	For any $\phi \in G(E^\bullet)$ and $C^\ast \in U(n)$ such that 
    $Q(C^\ast) = V^\bullet$:
	\begin{enumerate}
		\item $\phi_\ast C^\ast$ is a unitary matrix,
		\item $Q(\phi_\ast C^\ast) = \phi_\ast V^\bullet$.
	\end{enumerate}
	In other words, the map $Q : U(n) \rightarrow \Fcal$ intertwines the two push-forward operations defined above.
\end{proposition}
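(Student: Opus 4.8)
The plan is to reduce statement (2) to statement (1) by unwinding the definitions of $Q$, of the push-forward of a matrix, and of the push-forward of a flag, and then to prove (1) by induction on the number of maps $\phi_k$ applied, using that each $\phi_k \in G_k = U(V^k)$ is unitary on $V^k$ and that it fixes the ambient space in the sense that $\phi_k(V^k) = V^k$ together with the nesting $V^k \subset V^{k+1} \subset \cdots \subset V^n = \C^n$.

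First I would set up notation for the partial compositions. For $1 \le k \le n$ put $\Psi_k = \phi_n \circ \phi_{n-1} \circ \cdots \circ \phi_k$, viewed as a map defined on $V^k$ (this makes sense because $\phi_j \in U(V^j)$ preserves $V^j \supset V^{j-1}$, so $\phi_k(V^k) = V^k \subset V^{k+1}$ is in the domain of $\phi_{k+1}$, and so on; each $\Psi_k$ lands in $\C^n$). By definition $u_k^\phi = \Psi_k(u_k)$ and $V_\phi^k = \Psi_k(V^k)$. For (1), I claim $(u_1^\phi, \dots, u_n^\phi)$ is a unitary basis of $\C^n$, i.e. $\dotprod{u_i^\phi}{u_j^\phi} = \delta_{i,j}$. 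Assume $i \le j$. Then $u_i^\phi = \Psi_i(u_i)$ and $u_j^\phi = \Psi_j(u_j)$; writing $\Psi_i = \Psi_{i+1} \circ \cdots$, the key point is that $\Psi_j$ and $\Psi_i$ agree on $V^i$ after the ``extra'' maps $\phi_{i}, \dots, \phi_{j-1}$ are applied — more precisely, $\Psi_i|_{V^i} = \Psi_j \circ (\phi_{j-1} \circ \cdots \circ \phi_i)|_{V^i}$. So setting $\tilde{u}_i = (\phi_{j-1}\circ \cdots \circ \phi_i)(u_i) \in V^j$, we get $u_i^\phi = \Psi_j(\tilde u_i)$ and $u_j^\phi = \Psi_j(u_j)$, hence $\dotprod{u_i^\phi}{u_j^\phi} = \dotprod{\Psi_j(\tilde u_i)}{\Psi_j(u_j)} = \dotprod{\tilde u_i}{u_j}$, because $\Psi_j = \phi_n\circ\cdots\circ\phi_j$ is a composition of unitary maps $\phi_\ell \in U(V^\ell)$ and is therefore an isometry on $V^j$. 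Finally $\dotprod{\tilde u_i}{u_j} = \dotprod{(\phi_{j-1}\circ\cdots\circ\phi_i)(u_i)}{u_j}$, and since $\phi_{j-1}\circ\cdots\circ\phi_i$ is a unitary map of $V^{j-1}$ while $u_j \perp V^{j-1}$ (as $(u_1,\dots,u_n)$ is unitary and $u_j$ is orthogonal to $V^{j-1} = \Span(u_1,\dots,u_{j-1})$), we have $\dotprod{\tilde u_i}{u_j} = 0$ when $i < j$; when $i = j$ it equals $\dotprod{u_j}{u_j} = 1$. This proves (1).

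For (2), by definition $Q(\phi_\ast C^\ast)$ is the complete flag whose $k$-th space is $\Span(u_1^\phi, \dots, u_k^\phi)$, and $\phi_\ast V^\bullet$ has $k$-th space $V_\phi^k = \Psi_k(V^k)$, so I must show $\Span(u_1^\phi,\dots,u_k^\phi) = \Psi_k(V^k)$ for every $k$. Using the identity from the previous paragraph, $u_i^\phi = \Psi_k\big((\phi_{k-1}\circ\cdots\circ\phi_i)(u_i)\big)$ for $i \le k$, so each $u_i^\phi$ lies in $\Psi_k(V^k)$ because $(\phi_{k-1}\circ\cdots\circ\phi_i)(u_i) \in V^{k-1} \subset V^k$; hence $\Span(u_1^\phi,\dots,u_k^\phi) \subseteq \Psi_k(V^k)$. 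Both sides have dimension $k$: the left because by (1) the $u_i^\phi$ are orthonormal, the right because $\Psi_k$ is injective on $V^k$ (a composition of bijections $\phi_\ell \in U(V^\ell)$) and $\dim V^k = k$. Therefore the inclusion is an equality, proving (2).

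I do not expect a serious obstacle here; the only thing requiring care is the bookkeeping of which $\phi_\ell$'s act on which spaces and the fact that the ``tail'' composition $\phi_{j-1}\circ\cdots\circ\phi_i$ genuinely maps $V^{j-1}$ unitarily into itself (so that orthogonality to $u_j$ is preserved) — this is exactly where the assumption $\phi_\ell \in G_\ell \subset U(V^\ell)$, hence $\phi_\ell(V^\ell) = V^\ell$, is used. An alternative, perhaps cleaner, packaging is to prove by downward induction on $k$ the single statement ``$\phi_n\circ\cdots\circ\phi_k$ restricted to $V^k$ is a unitary isomorphism onto $V_\phi^k$ sending the basis $(\phi_{k-1}\circ\cdots\circ\phi_1)(u_1),\dots$ appropriately,'' but the direct computation above is short enough that I would simply carry it out.
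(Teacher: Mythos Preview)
Your argument is correct and follows essentially the same route as the paper: both prove (1) by peeling off the common unitary tail $\phi_n\circ\cdots\circ\phi_j$ to reduce $\dotprod{u_i^\phi}{u_j^\phi}$ to $\dotprod{(\phi_{j-1}\circ\cdots\circ\phi_i)(u_i)}{u_j}$, which vanishes because the first vector lies in $V^{j-1}$; and both deduce (2) from the containment $u_i^\phi \in V_\phi^k$ for $i\le k$ plus a dimension count. One cosmetic slip: in your proof of (2) the claim $(\phi_{k-1}\circ\cdots\circ\phi_i)(u_i)\in V^{k-1}$ fails when $i=k$ (the composition is empty and $u_k\in V^k\setminus V^{k-1}$), but the desired conclusion $u_k^\phi=\Psi_k(u_k)\in\Psi_k(V^k)$ is immediate anyway.
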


\begin{proof}
	Recall that $(u_1, \dots, u_n)$ is a unitary basis. Since the maps $\phi_1, \dots, \phi_n$ are unitary transformations, $u^\phi_1, \dots, u^\phi_n$ are also unit vectors. Moreover, for any $1 \leq i < j \leq n$,
	\begin{align*}
		\dotprod{u^\phi_i}{u^\phi_j} & = \dotprod{\phi_n(\phi_{n-1}(\cdots\phi_i(u_i)\cdots))}{\phi_n(\phi_{n-1}(\cdots\phi_j(u_j)\cdots))} \\
		& = \dotprod{\phi_{j-1}(\cdots\phi_i(u_i)\cdots)}{u_j} \\
		& = 0
	\end{align*}
	since $\phi_{j-1}(\cdots\phi_i(u_i)\cdots)$ lies in $V^{j-1} = \Span(u_1, \dots, u_{j-1})$. It follows that $\phi_\ast C^\ast$ is a unitary matrix. Moreover, it is clear that each $u_i$ lies in $V_\phi^i$ so $\phi_\ast C^\ast$ is a basis of the flag $\phi_\ast V^\bullet$.
\end{proof}

The push-forwards of flags and unitary bases can be rewritten more concisely in terms of applications $\bar{\phi}_1, \dots, \bar{\phi}_n$ defined in the following lemma.

\begin{lemma}
	\label{l:properties_bar_phi}
	Let $\phi \in G(E^\bullet)$.
	For $1 \leq k \leq n$, define $\bar{\phi}_k : V^k \rightarrow V_\phi^k$ recursively by:
\begin{equation}
\begin{cases}
			\bar{\phi}_n = \phi_n, & \\
			\bar{\phi}_k = \restrict{(\bar{\phi}_{k+1})}{V^k}{V^k_\phi} \circ \phi_k &\text{ for } 1 \leq k < n,
		\end{cases} 
\end{equation}
	Then for any $1 \leq k \leq n$:
	\begin{enumerate}
		\item $u^\phi_k = \bar{\phi}_k (u_k)$,
		\item $V_\phi^k = \bar{\phi}_k (V^k) = \bar{\phi}_{k+1}(V^k)$,
		\item $E_{D_\lambda} \cap V_\phi^k = \bar{\phi}_k(E^k) = \bar{\phi}_{k+1}(E^k)$
	\end{enumerate}
	(with the convention that $\bar{\phi}_{n+1}$ is the identity map on $\C^n$).
\end{lemma}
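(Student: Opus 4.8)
The plan is to prove the three assertions by a downward induction carried out through the recursive structure of the maps $\bar{\phi}_k$, using the fact that each $\phi_k$ lies in $G_k$ and therefore preserves $E^k$ and $V^k$ (the latter because $\phi_k$ is a unitary endomorphism of $V^k$). The base case $k = n$ is immediate: by definition $\bar{\phi}_n = \phi_n$, and $\phi_n \in G_n = G(E^\bullet)_n$ is a unitary self-map of $V^n = \C^n$ preserving $E^n = E_{D_\lambda}$, so (1), (2), (3) all hold for $k=n$ (using the convention $\bar{\phi}_{n+1} = \mathrm{id}$, so that $\bar{\phi}_{n+1}(V^n) = V^n = V_\phi^n$ and $\bar{\phi}_{n+1}(E^n) = E^n$, and noting $u^\phi_n = \phi_n(u_n) = \bar{\phi}_n(u_n)$).

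For the inductive step, suppose the three claims hold for $k+1$. First I would check that $\bar{\phi}_k$ is well-defined: the recursion sets $\bar{\phi}_k = \restrict{(\bar{\phi}_{k+1})}{V^k}{V^k_\phi}\circ \phi_k$, and this makes sense provided $\bar{\phi}_{k+1}$ maps $V^k$ into $V_\phi^k$; since $\phi_k(V^k) = V^k \subset V^{k+1}$ (as $\phi_k \in U(V^k)$), I need $\bar{\phi}_{k+1}(V^k) = V_\phi^k$, which is precisely the second half of assertion (2) at level $k+1$ — but wait, that is the claim I want to prove, so instead I will establish (2) as I go rather than assume it. The clean way: define $V_\phi^k := \bar{\phi}_k(V^k)$ by the formula $\bar{\phi}_k(V^k) = \bar{\phi}_{k+1}(\phi_k(V^k)) = \bar{\phi}_{k+1}(V^k)$ (using $\phi_k(V^k)=V^k$), and then match this against the definition \eqref{eqn:pushV}: by the inductive description $\bar{\phi}_{k+1}$ restricted to $V^{k+1}$ is $\phi_n \circ \cdots \circ \phi_{k+1}$ (read off from the recursion), so $\bar{\phi}_{k+1}(V^k) = \phi_n(\cdots \phi_{k+1}(V^k)\cdots) = V_\phi^k$ as defined in \eqref{eqn:pushV}. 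This simultaneously shows $\bar{\phi}_k$ is well-defined with target $V_\phi^k$ and proves (2) at level $k$. Assertion (1) at level $k$ then follows by unwinding: $\bar{\phi}_k(u_k) = \bar{\phi}_{k+1}(\phi_k(u_k)) = \phi_n(\cdots \phi_{k+1}(\phi_k(u_k))\cdots) = u_k^\phi$, using $u_k \in V^k$ so that the restriction of $\bar{\phi}_{k+1}$ applies.

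For assertion (3), I compute $\bar{\phi}_k(E^k) = \bar{\phi}_{k+1}(\phi_k(E^k)) = \bar{\phi}_{k+1}(E^k)$, using that $\phi_k \in G_k$ preserves $E^k = E_{D_\lambda}\cap V^k$. It remains to identify $\bar{\phi}_{k+1}(E^k)$ with $E_{D_\lambda}\cap V_\phi^k$. Here I invoke Lemma~\ref{l:properties_ellipsoid}(3): the map $\bar{\phi}_{k+1}$ restricted to $V^{k+1}$ is $\phi_n\circ\cdots\circ\phi_{k+1}$, a composition of unitary maps each preserving the ambient ellipsoid at its own stage; more directly, since $E^k \subset E^{k+1}$ and each $\phi_j$ ($j > k$) preserves $E^j \supset E^k$... — actually the cleanest route is: $\bar{\phi}_{k+1}(E^k) \subset \bar{\phi}_{k+1}(E^{k+1}) = E_{D_\lambda}$ by (3) at level $k+1$, and $\bar{\phi}_{k+1}(E^k)\subset \bar{\phi}_{k+1}(V^k) = V_\phi^k$ by (2) at level $k$, so $\bar{\phi}_{k+1}(E^k)\subset E_{D_\lambda}\cap V_\phi^k$; the reverse inclusion follows because $\bar{\phi}_{k+1}$ is a bijection $V^{k+1}\to V^{k+1}_\phi$ restricting to a bijection $V^k \to V^k_\phi$, and $E_{D_\lambda}\cap V_\phi^k = E_{\bar{\phi}_{k+1}(\alpha_k\text{-pushed})}$ has the right "size" — more carefully, apply Proposition~\ref{p:intersection_ellipsoid-subspace}(1) to see both sides are complex ellipsoids in $V_\phi^k$ and use property (1) of Lemma~\ref{l:properties_ellipsoid} (every ray meets the ellipsoid once) together with $\bar{\phi}_{k+1}(E_{D_\lambda}\cap V^{k+1}\cap V^k) = E_{D_\lambda}\cap V^{k+1}_\phi \cap V^k_\phi$, obtained by combining (3) at level $k+1$ with the pushforward of the intersection.

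The main obstacle I anticipate is the bookkeeping in assertion (3): keeping straight which ellipsoid lives in which subspace and justifying the surjectivity (reverse inclusion) $E_{D_\lambda}\cap V_\phi^k \subset \bar{\phi}_k(E^k)$ without circularity. The point is that $\bar{\phi}_{k+1}$ preserves the big ellipsoid $E_{D_\lambda} = E^{k+1}$ and sends $V^k$ bijectively onto $V_\phi^k$, hence must send $E_{D_\lambda}\cap V^k$ bijectively onto $E_{D_\lambda}\cap V_\phi^k$ — but this uses exactly that $\bar\phi_{k+1}$ preserves $E_{D_\lambda}$, which is (3) at level $k+1$, so the induction is consistent. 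The rest is routine unwinding of the recursive definition.
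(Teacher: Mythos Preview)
Your proof is correct and follows essentially the same route as the paper: both unwind the recursion to $\bar\phi_k = \phi_n\circ\cdots\circ\phi_k$ on $V^k$, which gives (1) and (2) at once, and both handle (3) via the observation that each $\phi_\ell$ is a bijection of $V^\ell$ preserving $E_{D_\lambda}\cap V^\ell$, hence sends $E_{D_\lambda}\cap W$ to $E_{D_\lambda}\cap\phi_\ell(W)$ for any $W\subset V^\ell$ (the paper phrases this as a direct iteration, you as a downward induction). One small slip: you write ``the big ellipsoid $E_{D_\lambda}=E^{k+1}$'', but these coincide only for $k+1=n$; since what you actually invoke is (3) at level $k+1$, namely $\bar\phi_{k+1}(E^{k+1})=E_{D_\lambda}\cap V_\phi^{k+1}$, the argument goes through unchanged.
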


\begin{proof}
	By definition of $\bar{\phi}_k$, for any subset $S \subset V^k$ we have 
	\[ \bar{\phi}_k(S) = \phi_n(\phi_{n-1}(\cdots\phi_k(S)\cdots)). \]
	Taking $S = \{ u_k \}$ proves (1) while $S = V^k = \phi_k^{-1}(V^k)$ proves (2). Now, recall that by definition $\phi_k : V^k \rightarrow V^k$ preserves $E^k = E_{D_\lambda} \cap V^k$. Then for any $\ell \geq k$, by writing
	\[ E^k = E_{D_\lambda} \cap V^k = E_{D_\lambda} \cap V^\ell \cap V^k \]
	we get
	\[ \phi_\ell(E^k) = \phi_\ell(E_{D_\lambda} \cap V^\ell) \cap \phi_\ell(V^k) = E_{D_\lambda} \cap V^\ell \cap \phi_\ell(V^k) = E_{D_\lambda} \cap \phi_\ell(V^k). \]
	Iterating this process, we can show that $\bar{\phi}_\ell(E^k) = E_{D_\lambda} \cap \bar{\phi}_\ell(V^k)$.
	Take $\ell = k$ or $\ell = k+1$ to prove (3).
\end{proof}

The push-forwards defined in this section do not provide a one-to-one correspondence between the group of symmetries $G(E^\bullet)$ and the set of flags with given eigenvalues. Actually, two symmetries give the same push-forwards if the are equal up to an element in the groups $H'(E^\bullet)$ or $H(E^\bullet)$ defined above. More precisely, we have:

\begin{proposition}
	For any $\phi, \psi \in G(E^\bullet)$,
	\begin{enumerate}
		\item $\phi_\ast V^\bullet = \psi_\ast V^\bullet$ if and only if $\psi = \phi \cdot f \text{ for some } f \in H(E^\bullet)$,
		\item $\phi_\ast C^\ast = \psi_\ast C^\ast$ if and only if $\psi = \phi \cdot f \text{ for some } f \in H'(E^\bullet)$,
	\end{enumerate}
	where the actions of $H(E^\bullet)$ and $H'(E^\bullet)$ on $G(E^\bullet)$ are given by Formula~\eqref{eq:action_on_symmetry_group} in Definition~\ref{def:symmetries_ellipsoid_flag}.
\end{proposition}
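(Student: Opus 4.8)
The plan is to factor both equivalences through a single identity describing how the ``bar maps'' $\bar\phi_1,\dots,\bar\phi_n$ of Lemma~\ref{l:properties_bar_phi} transform under the action~\eqref{eq:action_on_symmetry_group}, and then to read off statement~(2) by looking at the columns $u^\phi_k=\bar\phi_k(u_k)$ and statement~(1) by looking at the subspaces $V^k_\phi=\bar\phi_k(V^k)$. Along the way one checks that for $f\in H(E^\bullet)$ the right-hand side of~\eqref{eq:action_on_symmetry_group} is again an element of $G(E^\bullet)$ --- its $k$-th component $\restrict{(f_{k+1}^{-1})}{V^k}{V^k}\circ\phi_k\circ f_k$ makes sense because $f_{k+1}\in H_{k+1}$ preserves $V^k$, it is unitary, and it preserves $E^k$ because each of $f_k,\phi_k,f_{k+1}$ does --- that $\phi\cdot(fg)=(\phi\cdot f)\cdot g$, and that the action is free by the downward induction $f_n=\mathrm{id}\Rightarrow f_{n-1}=\mathrm{id}\Rightarrow\cdots$ already noted in Definition~\ref{def:symmetries_ellipsoid_flag}. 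I will also use freely that each $\bar\phi_k$ is a unitary isomorphism $V^k\to V^k_\phi$ (Lemma~\ref{l:properties_bar_phi}).

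The key lemma is: for $\phi,\psi\in G(E^\bullet)$ and $f\in H(E^\bullet)$ one has $\psi=\phi\cdot f$ if and only if $\bar\psi_k=\bar\phi_k\circ f_k$ on $V^k$ for every $1\le k\le n$. For the forward direction I write $\bar\psi_k(v)=\psi_n(\psi_{n-1}(\cdots\psi_k(v)\cdots))$, substitute $\psi_j=\restrict{(f_{j+1}^{-1})}{V^j}{V^j}\circ\phi_j\circ f_j$ for $j<n$ and $\psi_n=\phi_n\circ f_n$, and observe that at each stage the factor $f_{j+1}$ produced by $\psi_{j+1}$ cancels the factor $\restrict{(f_{j+1}^{-1})}{V^j}{V^j}$ produced by $\psi_j$ --- this is legitimate precisely because $f_{j+1}\in H_{j+1}$ restricts to an automorphism of $V^j$, so that $f_{j+1}\circ\restrict{(f_{j+1}^{-1})}{V^j}{V^j}=\mathrm{id}_{V^j}$; the telescoping collapses to $\bar\psi_k(v)=\phi_n(\cdots\phi_k(f_k(v))\cdots)=\bar\phi_k(f_k(v))$. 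For the converse I feed $\bar\psi_k=\bar\phi_k\circ f_k$ into the recursions $\bar\psi_k=\restrict{(\bar\psi_{k+1})}{V^k}{V^k_\psi}\circ\psi_k$ and $\bar\phi_k=\restrict{(\bar\phi_{k+1})}{V^k}{V^k_\phi}\circ\phi_k$, cancel the injective $\bar\phi_{k+1}$, and solve for $\psi_k$, again using that $f_{k+1}$ preserves $V^k$ so that the composition obtained is exactly the $k$-th component of $\phi\cdot f$.

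Granting the key lemma, statement~(2) follows. If $\psi=\phi\cdot f$ with $f\in H'(E^\bullet)$, then $u^\psi_k=\bar\psi_k(u_k)=\bar\phi_k(f_k(u_k))=\bar\phi_k(u_k)=u^\phi_k$, since $f_k$ is the identity on $L_{k-1}=(V^{k-1})^{\perp V^k}=\C u_k$ (the last equality because $\dim L_{k-1}=1$ and $u_k$ is a unit vector of $V^k$ orthogonal to $V^{k-1}$). Conversely, if $\phi_\ast C^\ast=\psi_\ast C^\ast$, i.e.\ $u^\psi_k=u^\phi_k$ for all $k$, then $V^k_\phi=\Span(u^\phi_1,\dots,u^\phi_k)=V^k_\psi$ (Proposition~\ref{p:push_forward_commute_with_q}), so $f_k:=\bar\phi_k^{-1}\circ\bar\psi_k$ is a well-defined unitary self-map of $V^k$; by Lemma~\ref{l:properties_bar_phi} it preserves $E^k$ (because $\bar\phi_k(E^k)=E_{D_\lambda}\cap V^k_\phi=E_{D_\lambda}\cap V^k_\psi=\bar\psi_k(E^k)$) and $V^{k-1}$ (because $\bar\phi_k(V^{k-1})=V^{k-1}_\phi=V^{k-1}_\psi=\bar\psi_k(V^{k-1})$), and $f_k(u_k)=\bar\phi_k^{-1}(u^\psi_k)=\bar\phi_k^{-1}(u^\phi_k)=u_k$; hence $f_k\in H'_k$ and $\psi=\phi\cdot f$ by the key lemma. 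Statement~(1) is the same argument with $u^\phi_k$ replaced throughout by $\zeta_k u^\phi_k$ for suitable unimodular $\zeta_k$: when $\phi_\ast V^\bullet=\psi_\ast V^\bullet$, orthonormality of the columns of $\phi_\ast C^\ast$ and $\psi_\ast C^\ast$ (Proposition~\ref{p:push_forward_commute_with_q}) forces $u^\psi_k=\zeta_k u^\phi_k$ with $|\zeta_k|=1$, whence $f_k(u_k)=\zeta_k u_k$ and $f_k\in H_k$ (but not necessarily $H'_k$); conversely, for $f\in H(E^\bullet)$ each $f_k$, being unitary and preserving $V^{k-1}$, preserves the line $L_{k-1}$ and scales it by a unimodular constant, so $u^\psi_k=\bar\phi_k(f_k(u_k))$ is a unimodular multiple of $u^\phi_k$ and the two push-forward flags coincide.

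The only genuinely delicate point is the bookkeeping in the key lemma: one must ensure at every step that each map is applied only to vectors of the subspace on which it is defined --- in particular that $\restrict{(f_{k+1}^{-1})}{V^k}{V^k}$ is really the inverse of $\restrict{(f_{k+1})}{V^k}{V^k}$, which is exactly where $f\in H(E^\bullet)$ (rather than an arbitrary element of $G(E^\bullet)$) is used --- and keep track of which restriction of which $\bar\phi_j$ or $\bar\psi_j$ is in play. Once the identity $\bar\psi_k=\bar\phi_k\circ f_k$ is available, the rest is a direct unwinding of Lemma~\ref{l:properties_bar_phi} and of the definitions of $G_k$, $H_k$ and $H'_k$.
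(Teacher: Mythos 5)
Your proof is correct and follows essentially the same route as the paper's: both hinge on the identity $\bar\psi_k=\bar\phi_k\circ f_k$ for $f_k=\bar\phi_k^{-1}\circ\bar\psi_k$, verify $f_k\in H_k$ (resp.\ $H'_k$) via Lemma~\ref{l:properties_bar_phi}, and unwind the recursion defining the $\bar\phi_k$ to recover Formula~\eqref{eq:action_on_symmetry_group}. Your handling of part~(1) via unimodular scalars $\zeta_k$ is a cosmetic variant of the paper's direct subspace computation $V^k_\psi=\bar\phi_{k+1}(f_{k+1}(V^k))=V^k_\phi$, and both are fine.
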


\begin{proof}
	(1) We have $V_\phi^n = \C^n = V_\psi^n$.
	Suppose that $V_\phi^k = V_\psi^k$ for all $1 \leq k \leq n$. Then $f_k = \bar{\phi}_k^{-1} \circ \bar{\psi}_k : V^k \rightarrow V^k$ is well-defined and preserves $V^{k-1}$. By Lemma~\ref{l:properties_bar_phi}, $f_k$ preserves also $E^k$, 
    hence $f_k \in H_k$. We then have, for any $1 \leq k < n$,
	\[ f_k^{-1} = \bar{\psi}_k^{-1} \circ \bar{\phi}_k = (\restrict{(\bar{\psi}_{k+1})}{V^k}{V^k_\psi} \circ \psi_k)^{-1} \circ (\restrict{(\bar{\phi}_{k+1})}{V^k}{V^k_\phi} \circ \phi_k) = \psi_k^{-1} \circ \restrict{(f_{k+1}^{-1})}{V^k}{V^k} \circ \phi_k, \]
	that is $\psi_k = \restrict{(f_{k+1}^{-1})}{V^k}{V^k} \circ \phi_k \circ f_k$.
	
	Conversely, if $\psi = \phi \cdot f$ for some $f=(f_1, \dots, f_n) \in H(E^\bullet)$, then for any $1 \leq k \leq n$, $\bar{\psi}_k = \bar{\phi}_k \circ f_k$ and hence
	\[ V_\psi^k = \bar{\psi}_{k+1}(V^k) = \bar{\phi}_{k+1} \circ f_{k+1}(V^k) = \bar{\phi}_{k+1}(V^k) = V_\phi^k. \]
	
	(2)
	If $\phi_\ast C^\ast = \psi_\ast C^\ast$, then $\phi_\ast V = \psi_\ast V$ by Proposition~\ref{p:push_forward_commute_with_q}, and hence $\psi = \phi \cdot f$ 
    for some $f \in H(E^\bullet)$. By definition of the action of $H(E^\bullet)$ on $G(E^\bullet)$ we have
	\[ u^\psi_k = \bar{\psi}_k(u_k) = \bar{\phi}_k \circ f_k(u_k). \]
	
	Finally, for any $1 \leq k \leq n$,
	\[ u^\psi_k = u^\phi_k \iff \bar{\phi}_k \circ f_k(u_k) = \bar{\phi}_k(u_k) \iff f_k(u_k) = u_k, \] 
	hence $\psi_\ast C^\ast = \phi_\ast C^\ast$ if and only if $f_k$ is the identity on $\Span(u_k) = (V^{k-1})^{\perp V^k}$ for any $k$, that is $f \in H'(E^\bullet)$.
\end{proof}

We want to prove now that the push-forward operation allows, starting from a flag $V^\bullet$, to reach all the flags lying in the same fiber (in the sense of Definition \ref{defn:fiber}) as $V^\bullet$,and similarly for a unitary basis formed by the columns of $C^\ast$. 

To do so, we first need the following result, stating that if a complex ellipsoid $E$ contains two codimension 1 ellipsoids $E_1$ and $E_2$ with same radii, then there exists a symmetry of $E$ mapping $E_1$ onto $E_2$.

\begin{lemma}
\label{l:moving_ellipsoids_inside_ellipsoid}
	Let $V_1, V_2$ be two codimension $1$ subspaces of some Hermitian space $V$. Consider $\alpha : V \rightarrow V$ a positive definite Hermitian form and $E_\alpha$ the corresponding ellipsoid in $V$. For $i=1,2$, let $\beta_i : V_i \rightarrow V_i$ be the positive Hermitian form defining the ellipsoid $E_{\beta_i} = E_\alpha \cap V_i$. Then there exists a unitary map $\varphi : V \rightarrow V$ such that
\begin{equation}
\begin{cases}
		\varphi(E_\alpha) = E_\alpha, \\
		\varphi(V_1) = V_2.
	\end{cases} 
\end{equation}
if and only if $\beta_1$ and $\beta_2$ have same eigenvalues.
\end{lemma}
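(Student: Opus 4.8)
The plan is to prove the ``only if'' direction first, then construct the unitary map $\varphi$ for the ``if'' direction. For the necessity: suppose $\varphi : V \to V$ is unitary with $\varphi(E_\alpha) = E_\alpha$ and $\varphi(V_1) = V_2$. By Lemma~\ref{l:properties_ellipsoid}(4), $\varphi$ commutes with $\alpha$. I would then observe that $\varphi$ restricts to a unitary map $V_1 \to V_2$ which intertwines the defining forms: indeed, for $v \in V_1$ we have $\dotprod{\beta_2(\varphi v)}{\varphi v} = \dotprod{\alpha(\varphi v)}{\varphi v} = \dotprod{\varphi \alpha(v)}{\varphi v} = \dotprod{\alpha(v)}{v} = \dotprod{\beta_1(v)}{v}$, and since this holds for all $v \in V_1$, polarization (as in the proof of Lemma~\ref{l:properties_ellipsoid}(2), using that $\restrict{\varphi}{V_1}{V_2}$ is unitary) gives $\restrict{\varphi}{V_1}{V_2} \circ \beta_1 = \beta_2 \circ \restrict{\varphi}{V_1}{V_2}$. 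Hence $\beta_1$ and $\beta_2$ are conjugate by a unitary map, so they have the same eigenvalues.

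For the sufficiency, suppose $\beta_1$ and $\beta_2$ have the same eigenvalues. Let $L_i = V_i^{\perp}$ be the $1$-dimensional orthogonal complement of $V_i$ in $V$, and fix unit vectors $\ell_i \in L_i$. The idea is to build $\varphi$ from a unitary $\psi : V_1 \to V_2$ conjugating $\beta_1$ to $\beta_2$, together with a suitable choice on the line $L_1$; but one must be careful because $\alpha$ does not respect the splitting $V = V_i \oplus L_i$ in general, so $\varphi$ cannot be chosen as a direct sum. Instead I would use Lemma~\ref{l:alpha_k+1_depends_on_alpha_k} in the form adapted to this setting: writing $\alpha(\ell_i) = w_i \oplus a_i \ell_i$ with $w_i \in V_i$, $a_i \in \C$, the lemma gives $\alpha(v) = \beta_i(v) \oplus \dotprod{v}{w_i}\ell_i$ for $v \in V_i$. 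The whole transformation $\alpha$ is thus determined by the triple $(\beta_i, w_i, a_i)$, and in fact $a_i = \tr\alpha - \tr\beta_i$ is forced. So I would choose $\psi : V_1 \to V_2$ unitary with $\psi \beta_1 \psi^{-1} = \beta_2$ \emph{and} additionally $\psi(w_1) = w_2$; this extra condition can be arranged provided $\norm{w_1} = \norm{w_2}$ and provided $w_1, w_2$ sit ``the same way'' relative to the eigenspaces of $\beta_1, \beta_2$.

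The main obstacle — and the step I would spend the most care on — is verifying that $\norm{w_1} = \norm{w_2}$, and more precisely that $w_1$ and $w_2$ occupy corresponding positions in the eigenspace decompositions, so that a single unitary $\psi$ can simultaneously conjugate $\beta_1$ to $\beta_2$ and send $w_1$ to $w_2$. The key point is that these data are encoded in $E_\alpha$ itself: the eigenvalues of $\alpha$ (the ``top row'' $\gamma_{1,n} \geq \cdots \geq \gamma_{n,n}$) together with the eigenvalues of $\beta_i$ determine $\norm{w_i}^2$ and its components along the eigenspaces of $\beta_i$ via the interlacing/characteristic-polynomial relation
\begin{equation}
\det(x\,\mathrm{id} - \alpha) = (x - a_i)\det(x\,\mathrm{id} - \beta_i) - \sum_{j} \abs{\dotprod{w_i}{e_j^{(i)}}}^2 \prod_{m \neq j}(x - \gamma_m(\beta_i)),
\end{equation}
where $e_j^{(i)}$ runs over an eigenbasis of $\beta_i$. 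Since $\alpha$ is the same on both sides and $\beta_1, \beta_2$ have the same eigenvalues, comparing these expressions shows that the squared components $\abs{\dotprod{w_1}{e_j^{(1)}}}^2$ and $\abs{\dotprod{w_2}{e_j^{(2)}}}^2$ agree within each eigenspace of a repeated eigenvalue (and agree outright for simple eigenvalues). This is exactly the data needed to choose the unitary $\psi : V_1 \to V_2$ with $\psi\beta_1\psi^{-1} = \beta_2$ and $\psi(w_1) = w_2$. Finally I would set $\varphi = \psi \oplus (\zeta\cdot\mathrm{id}_{L_1\to L_2})$ for an appropriate phase $\zeta$, or more precisely define $\varphi$ on $V_1$ by $\psi$ and on $\ell_1$ by $\varphi(\ell_1) = \ell_2$, and check directly from $\alpha(v) = \beta_i(v)\oplus\dotprod{v}{w_i}\ell_i$ and $\alpha(\ell_i) = w_i \oplus a_i\ell_i$ (with $a_1 = a_2$) that $\varphi$ commutes with $\alpha$; Lemma~\ref{l:properties_ellipsoid}(4) then yields $\varphi(E_\alpha) = E_\alpha$, while $\varphi(V_1) = V_2$ holds by construction.
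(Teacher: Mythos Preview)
Your proposal is correct and follows essentially the same route as the paper: both arguments write $\alpha$ in block form relative to $V_i \oplus L_i$, use the characteristic polynomial identity to show that the components of $w_i$ along each eigenspace of $\beta_i$ have the same norm for $i=1,2$, then build $\psi$ eigenspace-by-eigenspace and extend by $\varphi(\ell_1)=\ell_2$. The only place to tighten your write-up is the repeated-eigenvalue case: your characteristic-polynomial formula only determines the \emph{total} squared norm $\sum_{j \in I_p} \abs{\dotprod{w_i}{e_j^{(i)}}}^2$ within each eigenspace, not the individual components, so the correct statement is that these block norms agree --- which is exactly what you need, since you then have full $U(\abs{I_p})$ freedom inside each eigenspace to send $w_1$ to $w_2$ (this is the paper's choice of $g^p$).
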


\begin{proof} The ``only if" part is obvious. Let us now assume
that $\beta_1$ and $\beta_2$ have the same eigenvalues
$\lambda_1 \geq \cdots \geq \lambda_n$, where $n = \dim V - 1$. 
For $i=1,2$, denote by $(v_i^1, \dots, v_i^n)$ a unitary basis of eigenvectors of $\beta_i$. Fix a vector $\ell_i$ of length 1 n $L_i = (V_i)^\perp$ and set $\Bcal_i = (v_i^1, \dots, v_i^n, \ell_i)$. The matrix of $\alpha$ in the unitary basis $\Bcal_i$ has the form
	\[ A_i = \myblockmatrix{D_\lambda}{x_i}{x_i^\ast}{a_i} \]
	where $D_\lambda = \diag(\lambda_1, \dots, \lambda_n)$, $x_i = (x_i^1, \dots, x_i^n) \in \C^n$ and $a_i \in \R$.
	
	For every $X \in \R \setminus \{ \lambda_1, \dots, \lambda_n \}$, $D_\lambda - X I_{n}$ is invertible. Set $y_i = (D_\lambda - X I_{n})^{-1} x_i$. Then $y_i^\ast = x_i^\ast (D_\lambda - X I_{n})^{-1}$. It follows that
	\[ \myblockmatrix{I_n}{0}{y_i^\ast}{(a_i - X) - y_i^\ast x_i} \myblockmatrix{D_\lambda - X I_n}{x_i}{0}{1} = A_i - X I_{n+1}. \]
	Thus we obtain the following relation between the characteristic polynomial $P_\alpha(X)$ of $\alpha$ and the characteristic polynomial $P_\lambda(X) = (\lambda_1 - X) \cdots (\lambda_n - X)$ of $\beta_i$:
	\[ P_\alpha(X) = ((a_i - X) - y_i^\ast x_i) P_\lambda(X). \]
	But observe that
	\[ a_i = \tr A_i - \tr D_\lambda = \tr \alpha - (\lambda_1 + \cdots + \lambda_n) \]
	does not depend on $i \in \{ 1, 2 \}$. It follows that $y_1^\ast x_1 = y_2^\ast x_2$, that is
	\[ \sum_{j=1}^n \frac{\abs{x_1^j}^2}{\lambda_j - X} = \sum_{j=1}^n \frac{\abs{x_2^j}^2}{\lambda_j - X}. \]
	
	Write $\{ 1, \dots, n \} = I_1 \sqcup \cdots \sqcup I_k$ such that $j,j'$ are in a same $I_p$ if and only if $\lambda_j = \lambda_{j'} =: \lambda_{I_p}$. Define $\psi : V^1 \rightarrow V^2$ as follows. Identifying the coefficients in the above equality between rational functions, we obtain that for any $1 \leq p \leq k$,
	\[ r = \norm{x_i^{I_p}} = \sum_{j \in I_p} \abs{x_i^j}^2 \]
	does not depend on $i$. In other words $x_1^{I_p}$ and $x_2^{I_p}$ lie in sphere of same radius in $\C^{\abs{I_p}}$, hence there exists a matrix $g^p \in U(\abs{I_p})$ such that $x_2^{I_p} = g^p \cdot x_1^{I_p}$. More precisely, for any $j \in I_p$,
	\[ x_2^j = \sum_{j' \in I_p} g^p_{j,j'} x_1^{j'}. \]
	Then set for any $j \in I_p$,
	\[ \psi(v_1^j) = \sum_{j' \in I_p} g^p_{j',j} v_2^{j'} \]
	(note that $\psi(v_1^j)$ is an eigenvector of $\beta_2$ associated to the eigenvalue $\lambda_{I_p}$). Then $\psi$ is unitary and satisfies $\psi(x_1^1 v_1^1 + \cdots + x_1^n v_1^n) = x_2^1 v_2^1 + \cdots + x_2^n v_2^n$.
	
	Now extend $\psi$ to a map $\varphi : V \rightarrow V$ 
    by setting for all $v_1 \in V_1$ and $a \in \C$,
	\[ \varphi(v_1 \oplus a \ell_1) = \psi(v_1) \oplus a \ell_2. \]
	It is clear that $\varphi$ maps $V_1$ onto $V_2$. Let us check now that is preserves $E_\alpha$, or equivalently, that $\varphi \circ \alpha = \alpha \circ \varphi$.
	
	For any eigenvector $v_1^j \in V_1$, $j \in I_p$,
	\[ \alpha \circ \varphi(v_1^j) = \alpha \left( \sum_{j' \in I_p} g^p_{j',j} v_2^{j'} \right) = \sum_{j' \in I_p} g^p_{j',j} (\lambda_{I_p} v_2^{j'} \oplus \bar{x}_2^{j'} \ell_2) = \lambda_j \psi(v_1^j) \oplus c_j \ell_2  \]
	with $c_j = \sum_{j' \in I_p} g^p_{j',j} \bar{x}_2^{j'}$. But since the matrix $(g_{j,j'})$ is in $U(\abs{I_p})$, its inverse is $(\bar{g}_{j',j})$ and we have $c_j = \bar{x}_1^{j}$. Hence, $\alpha \circ \varphi(v_1^j)$ is equal to
	\[ \varphi \circ \alpha(v_1^j) = \varphi(\lambda_j v_1^j \oplus \bar{x}_1^j \ell_1) = \psi(\lambda_j v_1^j) \oplus \bar{x}_1^j \ell_2. \]
	Moreover,
	\[ \alpha \circ \varphi(\ell_1) = \alpha(\ell_2) = \sum_{j=1}^n x_2^j v_2^j \oplus a_2 \ell_2 = \varphi \left( \sum_{j=1}^{n} x_1^j v_1^j \oplus a_2 \ell_1 \right) = \varphi \circ \alpha(\ell_1) \]
	so $\phi : V \rightarrow V$ is the unitary transform given by the lemma.
\end{proof}

\begin{proposition}
	\label{p:push_forward_describes_full_fiber}
	Let $V_1^\bullet, V_2^\bullet \in \Fcal$ be two complete flags and $C_1^\ast, C_2^\ast \in U(n)$ bases for these flags, i.e., $Q(C_i^\ast) = V_i^\bullet$. Then the following three conditions are equivalent:
	\begin{enumerate}
		\item $\Gamma_\lambda(V_1^\bullet) = \Gamma_\lambda(V_2^\bullet)$,
		\item $V_2^\bullet = \phi_\ast V_1^\bullet \text{ for some } \phi \in G(E_1^\bullet)$,
		\item $C_2^\ast = \phi_\ast C_1^\ast \text{ for some } \phi \in G(E_1^\bullet)$.
	\end{enumerate}
\end{proposition}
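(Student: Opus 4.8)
The plan is to prove the four implications $(3)\Rightarrow(2)$, $(2)\Rightarrow(1)$, $(1)\Rightarrow(2)$ and $(2)\Rightarrow(3)$, which together give $(1)\Leftrightarrow(2)\Leftrightarrow(3)$; all the substance is in $(1)\Rightarrow(2)$. The implication $(3)\Rightarrow(2)$ is immediate from Proposition~\ref{p:push_forward_commute_with_q}: if $C_2^\ast=\phi_\ast C_1^\ast$ then $V_2^\bullet=Q(C_2^\ast)=Q(\phi_\ast C_1^\ast)=\phi_\ast V_1^\bullet$. For $(2)\Rightarrow(1)$: if $V_2^\bullet=\phi_\ast V_1^\bullet$, then by Lemma~\ref{l:properties_bar_phi} each $\bar{\phi}_k\colon V_1^k\to V_\phi^k=V_2^k$ is a unitary isomorphism carrying $E_1^k=E_{D_\lambda}\cap V_1^k$ onto $E_{D_\lambda}\cap V_2^k=E_2^k$; since conjugating a Hermitian map by a unitary does not change its spectrum (Lemma~\ref{l:properties_ellipsoid}(3)), the maps defining $E_1^k$ and $E_2^k$ share all their eigenvalues for every $k$, i.e.\ $\Gamma_\lambda(V_1^\bullet)=\Gamma_\lambda(V_2^\bullet)$.

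For $(1)\Rightarrow(2)$ I would argue by induction on $n$, the case $n=1$ being trivial. Assume $\Gamma_\lambda(V_1^\bullet)=\Gamma_\lambda(V_2^\bullet)$. Comparing level $n-1$, the codimension-one ellipsoids $E_{D_\lambda}\cap V_1^{n-1}$ and $E_{D_\lambda}\cap V_2^{n-1}$ have the same eigenvalues $\gamma_{1,n-1}\geq\cdots\geq\gamma_{n-1,n-1}$, so Lemma~\ref{l:moving_ellipsoids_inside_ellipsoid} (with $V=\C^n$, $\alpha=D_\lambda$, $V_1=V_1^{n-1}$, $V_2=V_2^{n-1}$) yields $\phi_n\in U(n)$ with $\phi_n(E_{D_\lambda})=E_{D_\lambda}$ --- hence $\phi_n\in G_n$ --- and $\phi_n(V_1^{n-1})=V_2^{n-1}$. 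Then $W^\bullet=\bigl(\phi_n(V_1^1)\subset\cdots\subset\phi_n(V_1^{n-1})\bigr)$ is a complete flag inside $V_2^{n-1}$, and since $\phi_n$ is unitary with $\phi_n(E_{D_\lambda}\cap V_1^k)=E_{D_\lambda}\cap\phi_n(V_1^k)$, the ellipsoid flags obtained by intersecting both $W^\bullet$ and $(V_2^1\subset\cdots\subset V_2^{n-1})$ with the ``standard ellipsoid'' $E_{D_\lambda}\cap V_2^{n-1}$ of $V_2^{n-1}$ have the same eigenvalues. Identifying $V_2^{n-1}$ unitarily with $\C^{n-1}$ so that $E_{D_\lambda}\cap V_2^{n-1}$ becomes $E_{D_\mu}$, where $\mu=(\gamma_{1,n-1}\geq\cdots\geq\gamma_{n-1,n-1}>0)$ is automatically a positive decreasing spectrum, the induction hypothesis supplies $\psi=(\psi_1,\dots,\psi_{n-1})$, with each $\psi_k$ a unitary self-map of $\phi_n(V_1^k)$ preserving $E_{D_\lambda}\cap\phi_n(V_1^k)$, such that $\psi_\ast W^\bullet=(V_2^1\subset\cdots\subset V_2^{n-1})$. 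Setting $\phi_k:=\phi_n^{-1}\circ\psi_k\circ\phi_n|_{V_1^k}\in G_k$ for $k<n$, a telescoping computation gives $\bar{\phi}_k=\bar{\psi}_k\circ\phi_n|_{V_1^k}$, hence $V_\phi^k=\bar{\psi}_k(\phi_n(V_1^k))=\bar{\psi}_k(W^k)=V_2^k$ for all $k$. Thus $\phi=(\phi_1,\dots,\phi_n)\in G(E_1^\bullet)$ satisfies $\phi_\ast V_1^\bullet=V_2^\bullet$.

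It remains to promote $(2)$ to $(3)$. Given $\phi\in G(E_1^\bullet)$ with $\phi_\ast V_1^\bullet=V_2^\bullet$, the columns $u_k^\phi$ of $\phi_\ast C_1^\ast$ (a unitary basis of $V_2^\bullet$, by Proposition~\ref{p:push_forward_commute_with_q}) and the columns $u_k'$ of $C_2^\ast$ are two unitary bases adapted to the same complete flag $V_2^\bullet$, so $u_k'=c_k u_k^\phi$ with $|c_k|=1$ for each $k$. Put $f=(c_1\,\mathrm{id}_{V_1^1},\dots,c_n\,\mathrm{id}_{V_1^n})$; each $c_k\,\mathrm{id}_{V_1^k}$ is unitary, commutes with the defining map of $E_1^k$ and preserves $V_1^{k-1}$, so $f\in H(E_1^\bullet)$. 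Then $\psi:=\phi\cdot f\in G(E_1^\bullet)$ and, using the identity $\bar{\psi}_k=\bar{\phi}_k\circ f_k$ from the proof of the preceding proposition together with Lemma~\ref{l:properties_bar_phi}, $u_k^\psi=\bar{\psi}_k(u_k)=\bar{\phi}_k(c_k u_k)=c_k u_k^\phi=u_k'$ for every $k$, i.e.\ $C_2^\ast=\psi_\ast C_1^\ast$, which is $(3)$.

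The main obstacle is the inductive step of $(1)\Rightarrow(2)$, and specifically the bookkeeping around the conjugation by $\phi_n$: the recursion naturally produces maps $\psi_k$ acting on the pushed-down subspaces $\phi_n(V_1^k)$, whereas the $k$-th component of an element of $G(E_1^\bullet)$ must act on the original space $V_1^k$, so one must conjugate and then verify --- through the nested-composition structure of the push-forward and the telescoping identity $\bar{\phi}_k=\bar{\psi}_k\circ\phi_n|_{V_1^k}$ --- that the resulting $\phi$ pushes $V_1^\bullet$ exactly onto $V_2^\bullet$; one must also keep track of the fact that the ambient ``standard ellipsoid'' changes from $E_{D_\lambda}$ in $\C^n$ to $E_{D_\mu}$ in $\C^{n-1}$ when invoking the induction hypothesis. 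By comparison, the step $(2)\Rightarrow(3)$ is routine once the structure of $H(E_1^\bullet)$ from Proposition~\ref{p:structure_H_k+1} and the preceding proposition are available.
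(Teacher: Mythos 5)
Your proof is correct and follows essentially the same route as the paper: the equivalence is driven by Lemma~\ref{l:moving_ellipsoids_inside_ellipsoid} applied once per level, and the basis ambiguity in passing from flags to unitary matrices is absorbed by a diagonal element of $H(E_1^\bullet)$, exactly as in the paper. The only cosmetic difference is that you package the construction of $\phi$ in $(1)\Rightarrow(2)$ as an induction on $n$ with conjugation by $\phi_n$ (and close the loop via $(2)\Rightarrow(3)$ rather than $(1)\Rightarrow(3)$), whereas the paper runs the same recursion downward inside $\C^n$ by pulling $V_2^{k-1}$ back through the already-constructed maps.
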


\begin{proof}
	Set $E_i^\bullet = E_{D_\lambda} \cap V_i^\bullet$, so $\Gamma_\lambda(V_i^\bullet) = \Gamma(E_i^\bullet)$, and let $\alpha_\bullet = (\alpha_1, \dots, \alpha_n)$ (res. $\beta_\bullet = (\beta_1, \dots, \beta_n)$) be the family of unitary maps defining $E_1^\bullet$ (res. $E_2^\bullet$).
	
	(3) $\Rightarrow$ (2) is just a consequence of Proposition~\ref{p:push_forward_commute_with_q}.
	
	(2) $\Rightarrow$ (1): if $V_2^\bullet = \phi_\ast V_1^\bullet$, then by Lemma~\ref{l:properties_bar_phi},
	\[ E_{\beta_k} = E_2^k = E_{D_\lambda} \cap V_2^k = \bar{\phi}_k(E_1^k) = \bar{\phi}_k(E_{\alpha_k}) \]
	hence $\beta_k = \bar{\phi}_k \circ \alpha_k \circ \bar{\phi}_k^\ast$ and so $\alpha_k$ and $\beta_k$ have same eigenvalues.
	
	(1) $\Rightarrow$ (2): suppose $\Gamma_\lambda(V_1^\bullet) = \Gamma_\lambda(V_2^\bullet)$ and let us construct $\phi \in G(E^\bullet)$ recursively. Applying Lemma~\ref{l:moving_ellipsoids_inside_ellipsoid} to $V_1^{n-1}, V_2^{n-1} \subset \C^n$, we obtain a map $\phi_n : \C^n \rightarrow \C^n$ preserving $E_1^n$ and such that $\phi_n(V_1^{n-1}) = V_2^{n-1}$.
	
	Suppose constructed $\phi_n, \phi_{n-1}, \dots, \phi_{k+1}$ such that $V_2^{i-1} = \bar{\phi}_i(V_1^{i-1})$ for all $k < i \leq n$. Then 
	\[ W_2^{k-1} = \phi_{k+1}^{-1}(\phi_{k+2}^{-1}( \cdots \phi_n^{-1}(V_2^{k-1}) \cdots )) \subset V_1^k \]
	gives a ellipsoid $E_1^{k} \cap W_2^{k-1}$ in $V_1^k$ with same eigenvalues as $E_2^{k-1}$, and then same eigenvalues as $E_1^{k-1}$. Applying Lemma~\ref{l:moving_ellipsoids_inside_ellipsoid}, we obtain a map $\phi_k : V_1^k \rightarrow V_1^k$ preserving $E_1^k$ and such that $\phi_k(V_1^{k-1}) = W_2^{k-1}$, or equivalently,
	\[ \phi_n(\phi_{n-1}( \cdots \phi_k(V_1^{k-1}) \cdots )) = V_2^{k-1}. \]
	
	(1) $\Rightarrow$ (3): suppose $\Gamma_\lambda(V_1^\bullet) = \Gamma_\lambda(V_2^\bullet)$, then we just showed that there exists $\phi \in G(E_1^\bullet)$ such that $V_2^\bullet = \phi_\ast V_1^\bullet$. Set $C_2'^\ast = \phi_\ast C_1^\ast$. We have
	\[ Q(C_2'^\ast) = \phi_\ast Q(C_1^\ast) = \phi_\ast V_1^\bullet = V_2^\bullet = Q(C_2^\ast) \]
	hence there exists $T = \diag(\xi_1, \dots, \xi_n)$ such that $C_2^\ast = C_2'^\ast T$. For any $1 \leq i \leq n$, let $f_i : V_1^i \rightarrow V_1^i$ be the multiplication by the scalar $\xi_i \in U(1)$. Then $f = (f_1, \dots, f_n)$ is an element of $H(E_1^\bullet)$. Consider $\psi = \phi \cdot f \in G(E_1^\bullet)$. We have clearly $\bar{\psi}_i = \xi_i \bar{\phi}_i$, hence $\psi_\ast C_1^\ast = (\phi_\ast C_1^\ast) T = C_2'^\ast T = C_2^\ast$.
\end{proof}

	\subsection{The symmetry groupoid of ellipsoid flags}
    
Let us fix a value $\Lambda = q (V_0^\bullet) \in \mathbb{R}^N$
given by a complete flag $V_0^\bullet$,
and denote by $\mathcal{F}_\Lambda \ni V^\bullet$ the preimage of
$\Lambda$ under the map $Q$. For each complete flag $V^\bullet \in \Fcal$, let us denote by $G_{\lambda}(V^\bullet) = G(E^\bullet)$ the symmetry group of the ellipsoid flag $E^\bullet = E_{D_\lambda} \cap V^\bullet$, and let
\begin{equation}
\Gcal_\Lambda = {\Large\sqcup}_{V^\bullet \in \mathcal{F}_\Lambda} G_\lambda(V^\bullet) = \{ (V^\bullet, \phi) \mid V^\bullet \in \Fcal_\Lambda,\ \phi \in G_\lambda(V^\bullet) \} 
\end{equation}
be the disjoint union of those groups as $V^\bullet$ varies in $\Fcal_\Lambda$. Define the source and the target maps $s, t : \Gcal_\Lambda \rightarrow \Fcal_\Lambda$ by the following formula for $(V^\bullet, \phi)$ in $\Gcal_\Lambda$: 
\begin{equation}
\begin{cases}
	s(V^\bullet, \phi) = V^\bullet, \\
	t(V^\bullet, \phi) = \phi_\ast V^\bullet,
\end{cases} 
\end{equation}
where $\phi_\ast V^\bullet$ denotes the push-forward of flags defined by Formula \eqref{eqn:pushV}. The composition $\phi \circ \psi$ of two elements $(V^\bullet, \phi)$ and 
$(W^\bullet, \psi)$
such that $V = \psi_\ast W^\bullet$ is defined in an obvious way. For example, $(\phi \cdot \psi)_n = (\phi_n \circ \psi)_n$, $(\phi \cdot \psi)_{n-1} = ({\psi_n}|_{W_{n-1}})^{-1}
\circ \phi_{n-1} \circ ({\psi_n}|_{W_{n-1}}) \circ \psi_{n-1}.$
Similarly, there is a natural way to inverse the elements in
$\Gcal_\Lambda$. 
We have the following proposition, whose proof follows immediately
from the previous propositions: 

\begin{proposition}
	$\Gcal_\Lambda$ with the above natural maps and a natural smooth structure is a transitive Lie groupoid. 
\end{proposition}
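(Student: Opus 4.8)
The plan is to verify the axioms of a Lie groupoid one by one, relying on the propositions already established in this section. First I would check that $\Gcal_\Lambda \rightrightarrows \Fcal_\Lambda$ is a set-theoretic groupoid. The source map $s(V^\bullet,\phi)=V^\bullet$ is obviously well-defined; for the target map $t(V^\bullet,\phi)=\phi_\ast V^\bullet$ I would invoke Proposition~\ref{p:push_forward_describes_full_fiber}, which says that $\Gamma_\lambda(\phi_\ast V^\bullet)=\Gamma_\lambda(V^\bullet)$, hence $Q$ sends both flags to the same point, i.e. $\phi_\ast V^\bullet \in \Fcal_\Lambda$ again; so $t$ is well-defined. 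Composition is defined exactly when $s(V^\bullet,\phi)=t(W^\bullet,\psi)$, i.e. $V^\bullet=\psi_\ast W^\bullet$, and the composite is the pair $(W^\bullet, \phi\circ\psi)$ with $\phi\circ\psi$ given by the recursive formula indicated in the text; one checks $(\phi\circ\psi)_\ast W^\bullet = \phi_\ast(\psi_\ast W^\bullet)$ and that $\phi\circ\psi$ again preserves the relevant ellipsoid flag $E_W^\bullet$, so it lies in $G_\lambda(W^\bullet)$. Associativity, the identity bisection $V^\bullet\mapsto (V^\bullet,\mathrm{id})$, and inverses $(V^\bullet,\phi)^{-1}=(\phi_\ast V^\bullet,\phi^{-1})$ are then routine to verify from these definitions; the compatibility $s\circ\iota = t$, $t\circ\iota=s$ for the inversion $\iota$ follows because $\phi^{-1}_\ast(\phi_\ast V^\bullet)=V^\bullet$.

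Next I would address transitivity. Transitivity of a Lie groupoid over $\Fcal_\Lambda$ means that the anchor $(s,t):\Gcal_\Lambda\to\Fcal_\Lambda\times\Fcal_\Lambda$ is surjective, i.e. any two flags in the same fiber are connected by an arrow. This is precisely the content of the implication $(1)\Rightarrow(2)$ in Proposition~\ref{p:push_forward_describes_full_fiber}: if $V_1^\bullet,V_2^\bullet\in\Fcal_\Lambda$ then $\Gamma_\lambda(V_1^\bullet)=\Gamma_\lambda(V_2^\bullet)$ (both equal $\Lambda$ pulled back appropriately), so there is $\phi\in G(E_1^\bullet)$ with $V_2^\bullet=\phi_\ast V_1^\bullet$, i.e. an arrow $(V_1^\bullet,\phi)$ from $V_1^\bullet$ to $V_2^\bullet$. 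Hence the groupoid is transitive, and I should note in passing that the isotropy group at $V^\bullet$ is $G_\lambda(V^\bullet)$ itself.

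The remaining, and genuinely more delicate, task is the smooth structure: I need to exhibit a manifold structure on $\Gcal_\Lambda$ making $s$ a submersion and all the structure maps smooth, which in the transitive case automatically makes it a Lie groupoid. The natural approach is to use the principal bundle picture: fix the base point $V_0^\bullet$, let $P = t^{-1}(V_0^\bullet)$ (arrows \emph{ending} at $V_0^\bullet$, or dually $s^{-1}(V_0^\bullet)$), which carries a free action of the isotropy group $G_0 := G_\lambda(V_0^\bullet)$, and observe that $\Gcal_\Lambda \cong (P\times P)/G_0$ via the standard gauge-groupoid construction, with $\Fcal_\Lambda \cong P/G_0$. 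Concretely, from Figure~\ref{fig:cd_change_of_pov} and Proposition~\ref{p:push_forward_describes_full_fiber}(3), one identifies $P$ with the set of $C^\ast\in U(n)$ whose flag lies in $\Fcal_\Lambda$ — via push-forward $\phi\mapsto \phi_\ast C_0^\ast$, the fiber of arrows is parametrized by such matrices, and $\Fcal_\Lambda = Q(\{C^\ast : \Gamma_\lambda(Q(C^\ast))=\Lambda\})$ is a submanifold of $\Fcal$. I would show $P$ is a smooth manifold (a fiber of the map $\Gamma_\lambda\circ Q$ on $U(n)$, or better, built from the smoothness statement of Theorem~\ref{t:GC_fibers_are_manifolds} which identifies fibers with quotient manifolds), that the $G_0$-action is free and proper (automatic since $G_0$ is compact) with quotient $\Fcal_\Lambda$, and then the gauge groupoid $(P\times P)/G_0$ has its canonical Lie groupoid structure; finally I identify it with $\Gcal_\Lambda$ as defined, checking the identification intertwines source, target, composition and inversion.

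I expect the main obstacle to be precisely this last point: making the "natural smooth structure" genuinely canonical and verifying that $s$ is a submersion. The set-theoretic and transitivity parts are essentially bookkeeping on top of Proposition~\ref{p:push_forward_describes_full_fiber}, but to run the gauge-groupoid argument cleanly I must know that $\Fcal_\Lambda$ is an embedded submanifold of $\Fcal$ and that the disjoint union $\sqcup_{V^\bullet} G_\lambda(V^\bullet)$ assembles into a smooth bundle of Lie groups over it — a priori the groups $G_\lambda(V^\bullet)=U(W_1)\oplus\cdots\oplus U(W_r)$ have dimensions that could jump as $V^\bullet$ varies. The resolution is that on a single fiber $\Fcal_\Lambda$ the eigenvalue multiplicities (the "horizontal equalities" of Proposition~\ref{p:structure_G_k}) are constant by definition of $\Gamma_\lambda$, so the isotropy groups are all isomorphic and fit into a locally trivial bundle; this is what the phrase "a natural smooth structure" in the statement is quietly invoking, and spelling it out — e.g. by transporting the structure from $U(n)$ along the smooth local sections of $Q$ over $\Fcal_\Lambda$ — is the one place where real care is needed.
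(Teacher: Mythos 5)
Your set-theoretic verification and the transitivity argument are correct, and they are exactly what the paper's terse ``follows immediately from the previous propositions'' is pointing at: well-definedness of $t$ is the implication $(2)\Rightarrow(1)$ of Proposition~\ref{p:push_forward_describes_full_fiber}, and transitivity of the anchor is $(1)\Rightarrow(2)$. Your closing observation --- that the eigenvalue multiplicities are constant along $\Fcal_\Lambda$, so the groups $G_\lambda(V^\bullet)$ are mutually isomorphic and can fit into a locally trivial bundle --- is also the right explanation of why a ``natural smooth structure'' exists at all.

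The gauge-groupoid step, however, is set up with the wrong principal bundle, and as written it would produce a different groupoid. The isotropy group of $\Gcal_\Lambda$ at $V_0^\bullet$ is \emph{not} $G_\lambda(V_0^\bullet)$: it is $\{\phi \in G_\lambda(V_0^\bullet) : \phi_\ast V_0^\bullet = V_0^\bullet\}$, which by part (1) of the unnumbered proposition preceding Proposition~\ref{p:push_forward_describes_full_fiber} is the (free) orbit of the identity under $H(E_0^\bullet)$, hence a copy of $H(E_0^\bullet)$; the full group $G_\lambda(V_0^\bullet)$ is the source fiber $s^{-1}(V_0^\bullet)$, not the isotropy. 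Relatedly, the map $\phi \mapsto \phi_\ast C_0^\ast$ from the source fiber to $U(n)^\ast_\Lambda$ is onto but has fibers equal to the $H'(E_0^\bullet)$-orbits (part (2) of the same proposition), so it identifies $U(n)^\ast_\Lambda$ with the quotient $G_\lambda(V_0^\bullet)/H'_\lambda(V_0^\bullet)$ rather than with the arrow fiber, except in the regular case where $H'$ is trivial. Consequently $G_\lambda(V_0^\bullet)$ does not act freely on $U(n)^\ast_\Lambda$, and the bundle $Q : U(n)^\ast_\Lambda \to \Fcal_\Lambda$ is a principal $\T^n$-bundle (its fiber consists of diagonal rescalings of the columns), whose gauge groupoid has isotropy $\T^n$ instead of $H(E^\bullet)$; for a diamond singularity $H(E^\bullet)$ contains a factor $U(l-1)\times U(1)$, so the two groupoids genuinely differ. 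The repair is direct: take $P = s^{-1}(V_0^\bullet) \cong G_\lambda(V_0^\bullet)$, which is a compact Lie group and hence already a manifold, equipped with the free right action of $H_\lambda(V_0^\bullet)$ given by Formula~\eqref{eq:action_on_symmetry_group}; its quotient is $\Fcal_\Lambda$ by Proposition~\ref{p:push_forward_describes_full_fiber}, and $\Gcal_\Lambda \cong \bigl(G_\lambda(V_0^\bullet)\times G_\lambda(V_0^\bullet)\bigr)/H_\lambda(V_0^\bullet)$ is the gauge groupoid of this bundle. With that substitution the remainder of your argument goes through.
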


We refer to~\cite{mackenzie1987lie} for the general theory of transitive Lie groupoids.
We will call  $\Gcal_\Lambda \rightrightarrows \Fcal_\Lambda$  it the \textbf{\textit{symmetry groupoid of ellipsoid flags}} for a given $\Lambda$. According to Proposition \ref{p:push_forward_describes_full_fiber}, the base manifold 
$\Fcal_\Lambda$ of this groupoid is diffeomorphic to
the quotient of the group $G_\lambda(V_0^\bullet)$ by the right
action of the group $H_\lambda(V_0^\bullet)$ given by
Formula \eqref{eq:action_on_symmetry_group}.

Denote by $U(n)^\ast_\Lambda$ the set of all $C^\ast \in U(n)$
such that  the flag $Q(C^\ast)$ generated by its columns belongs
to $\Fcal_\Lambda$. According to Proposition \ref{p:push_forward_describes_full_fiber}, we have that 
$U(n)^\ast_\Lambda$ is diffeomorphic to the quotient 
$S(E_0^\bullet) = G_\lambda(V_0^\bullet)/H'_\lambda(V_0^\bullet)$
of the group $G_\lambda(V_0^\bullet)$ by the right
action of the group $H'_\lambda(V_0^\bullet)$ given by
Formula \eqref{eq:action_on_symmetry_group}. 
Recall that this quotient space is called the reduced symmetry space of the ellipsoid flag 
$E_0^\bullet$, by Definition \ref{def:symmetries_ellipsoid_flag}. Moreover, the map
$q$ restricted to $U(n)^\ast_\Lambda$ is a submersion from
$U(n)^\ast_\Lambda$ to $\Fcal_\Lambda$. 

There is a natural smooth action of the Lie groupoid $\Gcal_\Lambda \rightrightarrows \Fcal_\Lambda$ on the submersion
$Q: U(n)^\ast_\Lambda \rightarrow \Fcal_\Lambda$ defined as follows: if $\phi \in G_\lambda(V^\bullet)$ and $C^\ast 
\in U(n)^\ast_\Lambda$ such that $Q(C) = V^\bullet$ then
\begin{equation}
\phi \cdot C^* = \phi_\ast C^*.
\end{equation}
Moreover, this action is transitive on  $U(n)^\ast_\Lambda$.

The above discussions lead to the following result on the smoothness of singular fibers of the Gelfand-Cetlin system:

\begin{theorem}
	\label{t:GC_fibers_are_manifolds}
    For any value $\Lambda$ of the Gelfand-Cetlin map $F_\lambda$,
the fiber $F_\lambda^{-1}(\Lambda)$ of the Gelfand--Cetlin system on $\orbit{\lambda}$ is an embedded smooth manifold diffeomorphic to the quotient manifold
\begin{equation}U(n)^\ast_\Lambda / U_\lambda 
\end{equation}
	where $U_\lambda$ is the group of unitary matrices commuting with $D_\lambda = \diag(\lambda_1, \dots, \lambda_n)$ and its free (right) action on $U(n)^\ast_\Lambda$ is defined by
\begin{equation}
    C^* \cdot P = P^* C^*
\end{equation}    
	for all $P \in U_\lambda$ and $C^* \in U(n)^\ast_\Lambda$.
\end{theorem}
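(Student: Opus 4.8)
The plan is to derive the theorem by combining the commutative diagram of Figure~\ref{fig:cd_change_of_pov} with the description of $U(n)^\ast_\Lambda$ as the reduced symmetry space $S(E_0^\bullet)$ established just above. Write $F_\lambda$ for the Gelfand--Cetlin map on $\orbit{\lambda}$. The first observation I would record is that $\pi\colon U(n)\to\orbit{\lambda}$, $\pi(C)=CD_\lambda C^\ast$, is exactly the quotient map for the free right action of $U_\lambda$ on $U(n)$ by right translation: $\pi(C_1)=\pi(C_2)$ precisely when $C_2^\ast C_1$ commutes with $D_\lambda$, i.e.\ lies in $U_\lambda$, so under the identification $\orbit{\lambda}\approx U(n)/U_\lambda$ of Proposition~\ref{p:model_for_coadjoint_orbits} the map $\pi$ is a principal $U_\lambda$-bundle.

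Next comes the bookkeeping with the diagram. Commutativity reads $F_\lambda\circ\pi=\Gamma_\lambda\circ Q\circ t$, with $t(C)=C^\ast$ an involution, so
\[
\pi^{-1}\bigl(F_\lambda^{-1}(\Lambda)\bigr)
= t\Bigl(Q^{-1}\bigl(\Gamma_\lambda^{-1}(\Lambda)\bigr)\Bigr)
= t\bigl(Q^{-1}(\Fcal_\Lambda)\bigr)
= t\bigl(U(n)^\ast_\Lambda\bigr),
\]
using $\Gamma_\lambda^{-1}(\Lambda)=\Fcal_\Lambda$, the definition of $U(n)^\ast_\Lambda$, and $t=t^{-1}$. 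I would then invoke the results preceding the statement: $\Fcal_\Lambda$ is an embedded submanifold of the compact flag manifold $\Fcal$ by Proposition~\ref{p:push_forward_describes_full_fiber}, hence $U(n)^\ast_\Lambda=Q^{-1}(\Fcal_\Lambda)$ is a compact embedded submanifold of $U(n)$ (preimage under the submersion $Q$), and so is $t(U(n)^\ast_\Lambda)$. The latter equals $\pi^{-1}(F_\lambda^{-1}(\Lambda))$, hence is saturated for the principal bundle $\pi$; therefore $F_\lambda^{-1}(\Lambda)=\pi\bigl(t(U(n)^\ast_\Lambda)\bigr)$ is an embedded submanifold of $\orbit{\lambda}$ and $\pi$ restricts to a principal $U_\lambda$-bundle over it.

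It remains to match the quotient with the stated action. Transport the right-translation $U_\lambda$-action on $t(U(n)^\ast_\Lambda)$ through the anti-automorphism $t$: from $t(P^\ast C^\ast)=CP=t(C^\ast)\cdot P$ one reads that right translation by $P$ corresponds on $U(n)^\ast_\Lambda$ to $C^\ast\mapsto P^\ast C^\ast$, which is a free right $U_\lambda$-action (if $P^\ast C^\ast=C^\ast$ then $P=I_n$) and, as one checks using the diagram together with $PD_\lambda P^\ast=D_\lambda$, preserves $U(n)^\ast_\Lambda$. Hence $t$ descends to a diffeomorphism $U(n)^\ast_\Lambda/U_\lambda\xrightarrow{\sim}t(U(n)^\ast_\Lambda)/U_\lambda$, and composing with the diffeomorphism induced by $\pi$ gives $F_\lambda^{-1}(\Lambda)\approx U(n)^\ast_\Lambda/U_\lambda$, explicitly $[C^\ast]\mapsto CD_\lambda C^\ast$, which is the assertion.

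The one non-formal ingredient, which I expect to be the main obstacle, is the embeddedness of $\Fcal_\Lambda$ in $\Fcal$ used above. To establish it I would show that the orbit map $G_\lambda(V_0^\bullet)\to\Fcal$, $\phi\mapsto\phi_\ast V_0^\bullet$, with image $\Fcal_\Lambda$, has constant rank: by the homogeneity encoded in the symmetry groupoid $\Gcal_\Lambda$ its rank at $\phi_0$ equals, after conjugating by $\bar\phi_0$, the rank at the identity of the analogous orbit map attached to the flag $(\phi_0)_\ast V_0^\bullet$, whose fibre is a coset of $H_\lambda\bigl((\phi_0)_\ast V_0^\bullet\bigr)$, a group whose dimension is independent of the flag by Propositions~\ref{p:structure_G_k} and~\ref{p:structure_H_k+1}. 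Constancy of the rank together with compactness of $G_\lambda(V_0^\bullet)$ then forces the image to be an embedded submanifold, and the theorem follows.
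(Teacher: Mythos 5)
Your proposal is correct, and its set-theoretic core coincides with the paper's entire written proof: the paper's argument is precisely your first observation, namely that $\pi(C_1)=\pi(C_2)$ iff $P=C_2^\ast C_1$ commutes with $D_\lambda$, i.e.\ $P\in U_\lambda$, so that the fiber is the quotient of $U(n)_\Lambda$ by right translation by $U_\lambda$. Where you diverge is in how the smooth and embedded structure is obtained. The paper delegates this entirely to the preceding groupoid discussion: $U(n)^\ast_\Lambda$ is declared diffeomorphic to the reduced symmetry space $S(E_0^\bullet)=G_\lambda(V_0^\bullet)/H'_\lambda(V_0^\bullet)$ (a quotient by a free action of a compact group), and the embeddedness of the fiber in $\orbit{\lambda}$ is not argued explicitly. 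You instead run the commutative diagram backwards to identify $\pi^{-1}(F_\lambda^{-1}(\Lambda))$ with $t(U(n)^\ast_\Lambda)$, prove that $\Fcal_\Lambda$ is embedded in $\Fcal$ via a constant-rank argument for the orbit map $\phi\mapsto\phi_\ast V_0^\bullet$, pull back under the submersion $Q$, and then use saturation for the principal $U_\lambda$-bundle $\pi$. This buys a genuinely more complete justification of the word ``embedded'' in the statement, at the cost of the constant-rank/homogeneity argument, which the paper avoids by working directly with the free $H'$-action. One small imprecision to fix: the fiber of the orbit map through $\phi_0$ is the orbit of $\phi_0$ under the twisted right action \eqref{eq:action_on_symmetry_group} of $H(E_0^\bullet)$ (by the proposition preceding Lemma~\ref{l:moving_ellipsoids_inside_ellipsoid}), not a coset of $H_\lambda((\phi_0)_\ast V_0^\bullet)$; since these groups are conjugate via $\bar\phi_0$ the dimension count you need is unaffected, and freeness of the $H$-action plus injectivity and compactness of $G/H$ do upgrade constant rank to an embedding as you claim.
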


Remark that, instead of looking at $U(n)^\ast_\Lambda$, we can also look at its inversion $U(n)_\Lambda$ in $U(n)$, and write the above quotient as $U(n)_\Lambda / U_\lambda $ with the right action
$C \cdot P = CP$.
\begin{proof} The proof is straightforward: two matrices
$C_1, C_2 \in U(n)_\Lambda$ give rise to the same element
in the fiber $F^{-1}(\Lambda) \in \orbit{\lambda}$ of the 
Gelfand-Cetlin system if and only if $C_1 D_\lambda C_1^*
= C_2 D_\lambda C_2^*$, which means that $P D_\lambda = D_\lambda P$, where $P = C_2^*C_1$, that is, $P \in U_\lambda$.
(Recall that $C^* = C^{-1}$ for unitary matrices $C$).
\end{proof}


\section{Dimensions, decomposition and examples of the fibers}

In this section, based on the results of the previous section,
we compute the dimensions of the fibers of 
Gelfand--Cetlin systems. We also show a topological decomposition
of these fibers into direct products, and describe explicitly 
some examples.

As before, we denote by $\lambda = (\lambda_1 \geq \cdots \geq \lambda_n > 0)$ a fixed positive spectrum, by $2N$ the dimension of the coadjoint orbit $\orbit{\lambda}$ and by $F_\lambda : \orbit{\lambda} \rightarrow \R^N$ the momentum map of the Gelfand--Cetlin system on $\orbit{\lambda}$. We also consider $\Fcal$ the set of complete flags in $\C^n$ and $\Gamma_\lambda : \Fcal \rightarrow \R^N$ the map that associates to a flag $V^\bullet$ the eigenvalues $\Gamma_\lambda(V^\bullet)$ of the ellipsoid flag $E_{D_\lambda} \cap V^\bullet$. We denote by $p : U(n) \rightarrow \orbit{\lambda}$ and $q : U(n) \rightarrow \Fcal$ the natural projections. Together with the involution $t : U(n) \rightarrow U(n)$, $t(C) = C^\ast$, these maps define a commutative diagram given in Figure~\ref{fig:cd_change_of_pov}.

In each case we fix a value $\Lambda = (\lambda_{i,j})_{1 \leq i \leq j < n}$ in $\R^N$, and we fix a flag $V^\bullet$ in $\Gamma_\lambda^{-1}(\Lambda)$. We denote by $\alpha_\bullet = (\alpha_1, \dots, \alpha_n)$ the family of Hermitian operators $\alpha_k:V^k \to V^k$ defining the ellipsoid flag $E^\bullet = E_{D_\lambda} \cap V^\bullet$. By our assumptions, for each $1 \leq k \leq n$, $\alpha_k$ has eigenvalues $\lambda_{1,k} \geq \lambda_{2,k} \geq \cdots \geq \lambda_{k,k}$.

\subsection{Dimensions of the fibers}

Using the notations and the results of the previous section, we have the following formula:
\begin{equation}
\dim F^{-1}(\Lambda) = \dim 
S(E^\bullet) - \dim U_\lambda = \dim G_\lambda(V^\bullet) - \dim H'_\lambda(V^\bullet) - \dim U_\lambda, 
\end{equation}
which can also be written
as
\begin{equation}
\dim F^{-1}(\Lambda) =  \sum_{k=1}^n (\dim G_k  - \dim H'_k) - \dim U_\lambda, 
\end{equation}
where $G_k$ is the group of unitary transformations of $V^k$ preserving $E^k = E_{D_\lambda} \cap V^k$; $H_k$ is the group of unitary transformations of $V^k$ preserving both $E^k$ and $E^{k-1}$
and such that their restriction to $(V^{k-1})^{\perp V^{k}}$ is the identity; $U_\lambda$ is the subgroup of unitary matrices which commute
with $\diag(\lambda_1,\hdots,\lambda_n)$.

\subsection{Regular fibers on generic coadjoint orbits}
In the regular case on a generic coadjoint orbit, i.e. where all the inequalities
in the Gelfand-Cetlin triangle are strict, all the eigenvalues of each
ellipsoid in the ellipsoid flags are distinct and are different from
those of the subsequent ellipsoid (i.e. the one of one dimension smaller), for each $k=1,\hdots,n$ we have that $H'_k$ is trivial
and $G_k = U(W_{1,k}) \oplus \cdots \oplus U(W_{k,k})$ where each
$W_{i,k}$ is a 1-dimensional eigenspace of $\alpha_k$, so we have
$G_k \cong U(1)^k \cong \mathbb{T}^k$. Similarly, $U_\lambda$ consists
of only diagonal unitary matrices, so $U_\lambda \cong \mathbb{T}^n$.
Thus, in the regular case on a generic coadjoint orbit, we have 
\begin{equation}
\dim F^{-1}(\Lambda) =  (\sum_{k=1}^n k) - n = \dfrac{n(n+1)}{2} - n 
= \dfrac{n(n-1)}{2} = N, 
\end{equation}
which is exactly half the dimension $n(n-1)$ of a generic coadjoint orbit of $U(n)$. This fact is not surprising, since we know that
connected regular fibers of an integrable Hamiltonian systems are 
Lagrangian tori whose dimension is equal to half the dimension 
of the symplectic manifold. In our regular generic case, 
$S(E^\bullet) \cong U(n)_\Lambda$ is  a torus of dimension $n(n+1)/2$
on which a torus $U(n)_\lambda \cong \mathbb{T}^n$ of dimension $n$
acts freely, and the quotient space $F^{-1}(\Lambda)$ is a torus
of dimension $n(n+1)/2 - n = n(n-1)/2 = N$.

\subsection{Elliptic nondegenerate singular fibers}

In the case when all the horizontal inequalities in the Gelfand-Cetlin triangle are strict, but there are some diagonal inequalities
(of the types $\lambda_{i,k} = \lambda_{i-1,k-1}$ or $\lambda_{i,k} = \lambda_{i,k-1}$), we still have $G_k \cong \mathbb{T}^k$ for each $k$
and $U_\lambda \cong \mathbb{T}^n$, but now some of the groups
$H'_k$ are non-trivial tori. For example, if $\lambda_{i,k} = \lambda_{i-1,k-1}$ then it means that the corresponding eigenspaces
of $\alpha_{k-1}$ and $\alpha_k$ coincide: $W_{i,k} = W_{i-1,k-1}$
which implies that $H'_{k}$ contains $U(W_{i,k})$. In general, if there are $s(k) \geq 0$ equalities between the numbers $\lambda_{1,k},\hdots,
\lambda_{k,k}$ and the numbers $\lambda_{1,k-1},\hdots,
\lambda_{k-1,k-1}$ then the group $H'_k$ is isomorphic to $\mathbb{T}^{s(k)}$. Thus, in this case we have
\begin{equation}
S(E^\bullet) \cong \mathbb{T}^{\frac{n(n+1)}{2} - \sum_k s(k)}
\end{equation}
and the fiber
\begin{equation}
F^{-1}(\Lambda) \cong \mathbb{T}^{N - \sum_k s(k)}
\end{equation}
is a torus of dimension $N - \sum_k s(k)$, where $N = \dfrac{n(n-1)}{2}$ is the dimension of  regular fibers and $\sum_k s(k)$ is the total number of diagonal equalities in the Gelfand-Cetlin triangle. 

Remark that, in this case, the momentum map $F$ is smooth at 
$F^{-1}(\Lambda)$, and $F^{-1}(\Lambda)$ is an elliptic singularity
of corank $\sum_k s(k)$, so the fact that it is a torus of dimension
$N - \sum_k s(k)$ fits well with the general theory of nondegenerate singularities of integrable Hamiltonian systems (see \cite{zung1996symplectic}).

\subsection{Spherical singularity of a generic orbit of $\uu(3)^\ast$} 
\label{subsection:spherical}

Take $n = 3$ and consider a generic coadjoint orbit of dimension $2N = 6$. Choose $\Lambda = (\lambda_{1,1}, \lambda_{2,1}, \lambda_{2,2})$ such that $\lambda_{2,1} = \lambda_{2,2}$. The inequalities in the Gelfand--Cetlin diagram implies that we have $\lambda_{1,1} = \lambda_{2,1} = c_{2,2} = \lambda_2$.

\subsubsection*{Computation of $G(E^\bullet)$}
We simply apply Proposition~\ref{p:structure_G_k} to obtain:
\begin{itemize}
	\item $G_1 = U(V^1)$,
	\item $G_2 = U(V^2)$,
	\item $G_3 = U(W_1) \oplus U(W_2) \oplus U(W_3)$ where each $W_i$ has dimension 1.
\end{itemize}

\subsubsection*{Computation of $H'(E^\bullet)$} For $0 \leq k < 3$, let $L_k$ be the orthogonal complement of $V^k$ in $V^{k+1}$. Set $A_k = \proj_{V^k}(\alpha_{k+1}(L_k))$ and denote by $A'_k$ its orthogonal complement in $V^k$. By Proposition~\ref{p:structure_H_k+1}, $H'_{k+1}$ is the set of all transforms
\[ \phi_k \oplus \mathrm{id}_{L_k} \]
with $\phi_k \in U(V^k)$ satisfying $\phi_k(v) = v$ for any $v \in A_k$. We then compute the dimension of $A_k$ to determine whether the latter condition on $\phi_k$ is trivial or not.
\begin{itemize}
	\item By definition, $H_1 = G_1 = \{ \xi . \mathrm{id}_{V^1} \}$ and $H'_1 = \{ \mathrm{id}_{V^1} \}$.
	
	\item Fix $\ell_1$ a unit vector in $L_1$ and denote $\alpha_2(\ell_1) = w_1 \oplus a_2 \ell_1$. Then by Lemma~\ref{l:alpha_k+1_depends_on_alpha_k}, for all $v_1 \in V^1$,
	\[ \alpha_2(v_1) = \alpha_1(v_1) \oplus \dotprod{v_1}{w_1} \ell_1. \]
	But remark that both $\alpha_1$ and $\alpha_2$ are simply the scalar multiplication by $\lambda_2$, so finally we have $\dotprod{v_1}{w_1} = 0$ for all $v_1 \in V^1$, that is $w_1 = 0$. It follows that $A_1 = \Span(w_1) = \{ 0 \}$, and then
	\[ H'_2 = \{ \phi_1 \oplus \mathrm{id}_{L_1} \mid \phi_1 \in U(V^1) \}. \]
	
	\item Since $A_2$ has dimension at most $1$, its complement $A'_2$ has dimension $1$ or $2$. But recall that $H_3$ is diffeomorpic to $U(A'_2) \times U(1)$, so
	\[ (\dim_\C A'_2)^2 + 1 = \dim_\R H_3 \leq \dim_\R G_3 = 3. \]
	It follows that $\dim_\C A'_2 = 1 = \dim_\C A$. Hence $H'_3$ rewrites as the subgroup
	\[ H'_3 = \{ \xi_3 . \mathrm{id}_{A'_2} \oplus \mathrm{id}_{A_2} \oplus \mathrm{id}_{L_3} \mid \xi_3 \in U(1) \}. \]
\end{itemize}

\subsubsection*{Quotient space $S(E^\bullet) = G(E^\bullet) / H'(E^\bullet)$}

We are going to write all the transformations in $G_1, G_2, G_3, H'_2, H'_3$ in the same basis $C = (u_1, u_2, u_3)$, which by definition satisfies $V^k = \Span(u_1, \dots, u_k)$ for all $1 \leq k \leq 3$.

First let us remark that $W_2 \subset V^2$ and is actually equal to $A'_2$. Indeed suppose $v_3 \in W_2$ and write
\[ v_3 = v_2 \oplus c \ell_2, \quad v_2 \in V^2, \ c \in \C \]
where $\ell_2$ is some fixed unit vector in $L_2$. Recall that by Lemma~\ref{l:alpha_k+1_depends_on_alpha_k}, if we write $\alpha_3(\ell_2) = w_2 \oplus a_3 \ell_2$ with $w_2 \in V^2$ and $a_3 \in \R$, then $\alpha_3(v_2) = \lambda_2 v_2 \oplus \dotprod{v_2}{w_2} \ell_2$. Since $v_3 \in W_2$, it satisfies $\alpha_3(v_3) = \lambda_2 v_3$, which rewrites as
\[ \begin{cases}
	\lambda_2 v_2 + c w_2 = \lambda_2 v_2, \\
	\dotprod{v_2}{w_2} + c a_3 = \lambda_2 c.
\end{cases} \]
We saw previously that $A_2 = \proj_{V^2} (\alpha_3(L_2)) = \Span(w_2)$ has dimension $1$, that is $w_2 \neq 0$. It follows that $c = 0$, hence $v_3 = v_2 \in V^2$. Moreover the above system gives also $\dotprod{v_3}{w_2} = \dotprod{v_2}{w_2} = 0$, that is $v_3$ is orthogonal to $A_2$. Since the dimensions agree, we obtain that $W_2 = (A_2)^{\perp V^2} = A'_2$.

Then, note that without loss of generality we can assume that $V^1 = A_2$. Indeed, take $\phi_2 \in U(V^2)$ mapping $V^1$ onto $A_2$, and set $\phi = (\mathrm{id}_{V^1}, \phi_2, \mathrm{id}_{V^3}) \in G(E^\bullet)$. The push-forward $\phi_\ast V^\bullet$ satisfies $V^2_\phi = V^2$ and $V^3_\phi = V^3$, hence only the $1$-dimensional ellipsoid $E^1$ has been modified. In particular the spaces $L_k, A_k, A'_k$ remain unchanged for $k \geq 2$. But according to the Gelfand--Cetlin diagram, the eigenvalue of $E_\phi^1$ is necessarily $\lambda$, hence $C' = \phi_\ast C$ lies in the same fiber of $\Gamma_\lambda$ as $C$.

\begin{figure}
	\centering
	\includegraphics{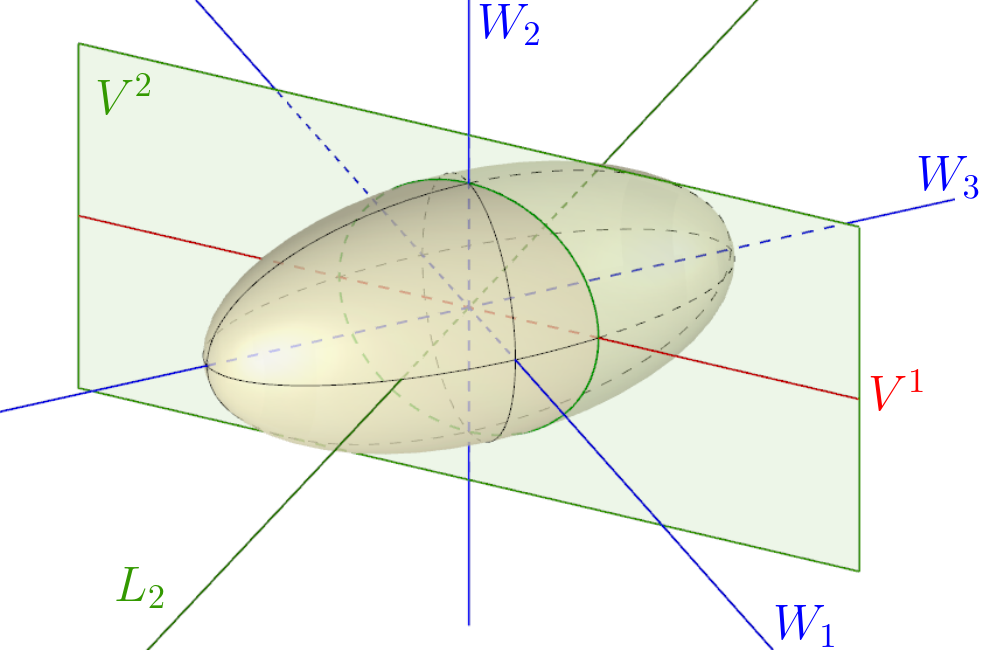}
	\caption{A complete flag in $\C^3$ corresponding to a matrix in the spherical singular fiber on $\uu(3)^\ast$}
	\label{f:spherical_singularity_flag}
\end{figure}
Figure~\ref{f:spherical_singularity_flag} provides a geometric interpretation of the above two facts. Consider $E^3$ a generic ellipsoid in $\C^3$ with semi-principal axes $v_1, v_2, v_3$ of lengths $a_1 < a_2 < a_3$ (with $a_i = 1/\sqrt{\lambda_i}$). If $E^2 = E^3 \cap V^2$ is a circle of radius $a_2$ contained in $E^3$, then $v_2$ must be an axis of $E^2$, giving the condition $W_2 = \Span(v_2) \subset V^2$. Moreover, any ellipsoid $E^1 = E^2 \cap V^1$ has radius $a_2$ too, so the value $c = \Gamma_\lambda(E^\bullet)$ does not depend on the choice of $V^1$.

Finally, if we denote by $J_1$ the identity on $A_2 = V_1 = \Span(u_1)$, by $J_2$ the identity on $W_2 = A'_2 = L_1 = \Span(u_2)$ and by $J_3$ the identity on $L_2 = \Span(u_3)$, we have:
\begin{itemize}
	\item $G_1 = \{ \zeta_1 J_1 \}$,
	\item $G_2 = U(V^2) = U(\C u_1 \oplus \C u_2)$,
	\item $G_3 = \{ \zeta_3 J_2 \oplus \psi_3 \mid \zeta_3 \in U(1),\ \psi_3 \in T \}$,
	\item $H'_2 = \{ \xi_2 J_1 \oplus J_2 \mid \xi_2 \in U(1) \}$,
	\item $H'_3 = \{ J_1 \oplus \xi_3 J_2 \oplus J_3 \mid \xi_3 \in U(1) \}$.
\end{itemize}
where $T$ is a subgroup of $U(\C u_1 \oplus \C u_3)$ diffeomorphic to $\T^2$. Define a map $\tilde{\varphi} : G(E^\bullet) \rightarrow G_2 \times T$ by setting for all $\phi = (\zeta_1 J_1, \phi_2, \zeta_3 J_2 \oplus \psi_3) \in G(E^\bullet)$,
\[ \tilde{\varphi}(\phi) = ((J_1 \oplus \zeta_3 J_2) \circ \phi_2 \circ (\zeta_1 J_1 \oplus J_2), \psi_3). \]
For any $f = (J_1, \xi_2 J_1 \oplus J_2, J_1 \oplus \xi_3 \oplus J_3) \in H'(E^\bullet)$,
\[ \phi \cdot f = (\xi_2^\ast \zeta_1 J_1, (J_1 \oplus \xi_3^\ast J_2) \circ \phi_2 \circ (\xi_2 J_1 \oplus J_2), \xi_3 \zeta_3 J_2 \oplus \psi_3). \]
The map $\tilde{\varphi}$ is $H'(E^\bullet)$-invariant and induces a diffeomorphism
\[ \varphi : S(E^\bullet) \overset{\sim}{\longrightarrow} G_2 \times T \isomorphic U(2) \times \T^2  \]

\subsubsection*{Quotient space $S(E^\bullet)/U_\lambda$}

Note that modulo $H'(E^\bullet)$, any $\phi \in G(E^\bullet)$ can be written
\[ \phi = (J_1, \phi_2, J_2 \oplus \psi_3), \quad \phi_2 \in G_2 = U(V^2),\ \psi_3 \in T \]
in a unique way, providing an explicit inverse $\varphi^{-1} : G_2 \times T \rightarrow S(E^\bullet)$. For such a $\phi$, the push-forward $\phi_\ast C = (u_1^\phi, u_2^\phi, u_3^\phi)$ is given by
\[ \begin{cases}
	u_3^\phi = \psi_3(u_3), \\
	u_2^\phi = (J_2 \oplus \psi_3)(\phi_2(u_2)), \\
	u_1^\phi = (J_2 \oplus \psi_3)(\phi_2(u_1)).
\end{cases} \]
This can be written in the more convenient form:
\[ \Forall 1 \leq k \leq 3, \quad u_k^\phi = (J_2 \oplus \psi_3) \circ (\phi_2 \oplus J_3)(u_k). \]

Let us now describe the action of $U_\lambda$ on $S(E^\bullet)$ explicitly. For any $P = \diag(\xi_1^\ast, \xi_2^\ast, \xi_3^\ast) \in U_\lambda$, the matrix $C' = P^\ast (\phi_\ast C)$ has columns $(u'_1, u'_2, u'_3)$ given by the relation
\[ \Forall 1 \leq k \leq 3, \quad u'_k = (\xi_1 I_1 \oplus \xi_2 I_2 \oplus \xi_3 I_3)(u_k^\phi), \]
where $I_j$ denote the identity map on $W_j = \C e_j$. Note that $J_2 = I_2$ and recall that $\psi_3 = \zeta_1 I_1 \oplus \zeta_3 I_3$ for some $\zeta_1, \zeta_2 \in U(1)$. It follows that
\[ (\xi_1 I_1 \oplus \xi_2 I_2 \oplus \xi_3 I_3) \circ (J_2 \oplus \psi_3) \circ (\phi_2 \oplus J_3) = (J_2 \oplus \psi'_3) \circ (\phi_2' \oplus J_3) \]
where $\psi_3' = (\xi_1 I_1 \oplus \xi_3 I_3) \circ \psi_3 = \xi_1 \zeta_1 I_1 \oplus \xi_3 \zeta_3 I_3 \in T$ and
\[ \phi'_2 = (J_1 \oplus \xi_2 J_2) \circ \phi_2 \in G_2. \]
Hence $P^\ast (\phi_\ast C) = \phi'_\ast C$ with $\phi' = (J_1, \phi'_2, J_2 \oplus \psi'_3)$.

Consider the smooth map
\[ \begin{array}{r|rcl}
	\tilde{\Psi} & S(E^\bullet) & \longrightarrow & \sphere{3} \\
		& (J_1, \phi_2, J_2 \oplus \psi_3) & \longmapsto & \phi_2^{-1}(u_1)
\end{array} \]
with values in the $3$-sphere $\sphere{3}$. It is clearly onto. With the above notation, note that
\[ (\phi'_2)^{-1}(u_1) = \phi_2^{-1} \circ (J_1 \oplus \xi_2^\ast)(u_1) = \phi_2^{-1}(u_1), \]
so $\tilde{\Psi}$ is constant along the orbits of the action of $U_\lambda$ on $S(E^\bullet)$. Moreover, suppose $(\phi'_2)^{-1}(u_1) = \phi_2^{-1}(u_1) = v_1$ for two maps $\phi_2, \phi'_2 \in G_2 = U(V^2)$. Let $v_2 = \phi_2^{-1}(u_2)$ and $v'_2 = (\phi'_2)^{-1}(u_2)$. Since $\phi_2 \in U(V^2)$, $(v_1, v_2)$ and $(v_1, v'_2)$ are two unitary bases of $V^2$. It follows that there exists $\xi_2^\ast \in U(1)$ such that $v'_2 = \xi_2 v_2$, hence $(\phi'_2)^{-1} = \phi_2^{-1} \circ (J_1 \oplus \xi_2^\ast J_2)$. Finally, the level-sets of $\tilde{\Psi}$ are exactly the orbits of $U_\lambda$ since for any $\psi_3 = \zeta_1 I_1 \oplus \zeta_3 I_3$ and $\psi'_3 = \zeta'_1 I_1 \oplus \zeta'_3 I_3$ we have $\psi'_3 = (\xi_1^\ast I_1 \oplus \xi_3^\ast I_3) \circ \psi_3$. Hence $\tilde{\Psi}$ induces a diffeomorphism $\Psi : S(E^\bullet)/ U_\lambda \rightarrow \sphere{3}$ and by Theorem~\ref{t:GC_fibers_are_manifolds} we recover the following result of the  thesis of Alamiddine~\cite{alamiddine2009GelfandCeitlin}:
\begin{proposition}
	Let $F_\lambda : \orbit{\lambda} \rightarrow \R^3$ be the Gelfand--Cetlin system on the generic coadjoint orbit $\mathfrak{u}(3)^\ast$ corresponding to eigenvalues $\lambda_1 > \lambda_2 > \lambda_3$.
Then the singular fiber $F_\lambda^{-1}(\lambda_2, \lambda_2, \lambda_2)$ is an embedded submanifold of $\orbit{\lambda}$ diffeomorphic to the $3$-dimensional spĥere $\sphere{3}$.
\end{proposition}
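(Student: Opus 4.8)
The plan is to deduce the statement directly from Theorem~\ref{t:GC_fibers_are_manifolds}, which identifies $F_\lambda^{-1}(\lambda_2,\lambda_2,\lambda_2)$ with the quotient $S(E^\bullet)/U_\lambda$ of the reduced symmetry space by the residual $U_\lambda$-action, together with the explicit model $S(E^\bullet) \isomorphic G_2 \times T \isomorphic U(2) \times \T^2$ built above. Thus the whole task reduces to making the $U_\lambda$-action on $G_2 \times T$ explicit and recognizing the quotient as $\sphere{3}$.

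First I would transport the $U_\lambda$-action through the identification $S(E^\bullet) \cong G_2 \times T$. Since $\lambda$ is generic, $U_\lambda \isomorphic \T^3$ is the group of diagonal unitary matrices $P = \diag(\xi_1^\ast,\xi_2^\ast,\xi_3^\ast)$ acting by $C^\ast \mapsto P^\ast C^\ast$. Writing any class in $S(E^\bullet)$ uniquely as $\phi = (J_1,\phi_2,J_2\oplus\psi_3)$ and using the column formula $u_k^\phi = (J_2\oplus\psi_3)\circ(\phi_2\oplus J_3)(u_k)$, one computes that $P$ sends $(\phi_2,\psi_3)$ to $\bigl((J_1\oplus\xi_2 J_2)\circ\phi_2,\ (\xi_1 I_1\oplus\xi_3 I_3)\circ\psi_3\bigr)$. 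In particular the $(\xi_1,\xi_3)$-part acts freely and transitively on the $\T^2$-factor $T$, while $\xi_2$ acts on the $U(2)$-factor $G_2 = U(V^2)$ by composition with the circle subgroup $\{J_1\oplus\xi_2 J_2\} = \mathrm{Stab}_{U(V^2)}(u_1)$; hence the action is free.

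Next I would produce the candidate diffeomorphism. Realizing $\sphere{3}$ as the unit sphere of the Hermitian plane $V^2$, set $\tilde{\Psi} : S(E^\bullet)\to\sphere{3}$, $(J_1,\phi_2,J_2\oplus\psi_3)\mapsto\phi_2^{-1}(u_1)$. It is well defined ($\phi_2$ unitary, $u_1$ a unit vector) and onto ($U(V^2)$ acts transitively on unit vectors). It is $U_\lambda$-invariant since $\psi_3$ does not enter the formula and $(J_1\oplus\xi_2 J_2)(u_1)=u_1$ (as $u_1\in V^1 = A_2$, on which $J_1$ is the identity), so $\bigl((J_1\oplus\xi_2 J_2)\circ\phi_2\bigr)^{-1}(u_1)=\phi_2^{-1}(u_1)$. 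Conversely, if $\phi_2^{-1}(u_1)=(\phi_2')^{-1}(u_1)=:v_1$, then $\phi_2'\circ\phi_2^{-1}$ fixes $u_1$, hence equals some $J_1\oplus\xi_2 J_2$, and the two $\psi_3$-components differ by a suitable $(\xi_1,\xi_3)\in\T^2$; thus the fibers of $\tilde{\Psi}$ are exactly the $U_\lambda$-orbits. Therefore $\tilde{\Psi}$ descends to a diffeomorphism $\Psi : S(E^\bullet)/U_\lambda \overset{\sim}{\longrightarrow}\sphere{3}$, and combined with Theorem~\ref{t:GC_fibers_are_manifolds} this proves that $F_\lambda^{-1}(\lambda_2,\lambda_2,\lambda_2)$ is an embedded submanifold of $\orbit{\lambda}$ diffeomorphic to $\sphere{3}$.

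The delicate point, deserving the most care, is the exact matching between level sets of $\tilde{\Psi}$ and $U_\lambda$-orbits: one must check that the $U(2)$-direction collapsed by $\tilde{\Psi}$ is precisely the $\xi_2$-direction of $U_\lambda$ (so nothing is lost) and that the $\psi_3\in\T^2$ ambiguity invisible to $\tilde{\Psi}$ is exactly absorbed by the $(\xi_1,\xi_3)$-part of $U_\lambda$ (so nothing spurious is added). This is where the concrete normal form $\phi=(J_1,\phi_2,J_2\oplus\psi_3)$ and the earlier reductions — that $W_2 = A'_2 = L_1$ and that one may assume $V^1 = A_2$ — are essential; without them the bookkeeping between the three circle factors of $U_\lambda$ and the factors of $S(E^\bullet)$ becomes opaque. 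Everything else (smoothness and the embedded-submanifold assertion) is automatic from Theorem~\ref{t:GC_fibers_are_manifolds}.
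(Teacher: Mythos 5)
Your proposal is correct and follows essentially the same route as the paper: the same identification $S(E^\bullet)\isomorphic G_2\times T\isomorphic U(2)\times\T^2$, the same explicit transport of the $U_\lambda$-action sending $(\phi_2,\psi_3)$ to $\bigl((J_1\oplus\xi_2 J_2)\circ\phi_2,(\xi_1 I_1\oplus\xi_3 I_3)\circ\psi_3\bigr)$, and the same map $\tilde{\Psi}(\phi)=\phi_2^{-1}(u_1)$ whose level sets are shown to coincide with the $U_\lambda$-orbits. The "delicate point" you flag at the end is precisely the verification carried out in the paper's final paragraph of the computation, so nothing is missing.
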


\begin{remark} An important contribution of \cite{alamiddine2009GelfandCeitlin} is the symplectic normal form proved for a neighborhood of these fibers: The symplectic model is that of the geodesic flow of the canonical (symmetric) metric on $S^3$.
\end{remark}

\subsection{Diamond singularities}

Consider now the case when there are some horizontal inequalities
in the Gelfand-Cetlin triangle of eigenvalues. Say $\lambda_{i,k} = \lambda_{i+1,k}$. In this case, the corresponding fiber $F^{-1}(\Lambda)$ will be called a \textit{\textbf{degenerate singularity}} of the Gelfand-Cetlin system. Notice that the components 
$F_{i,k}$ and $F_{i+1,k}$ of the Gelfand-Cetlin momentum map
(whose values at $F^{-1}(\Lambda)$ are equal, $\lambda_{i,k} = \lambda_{i+1,k}$)
are non-smooth at $F^{-1}(\Lambda)$, though their sum and their product
are still smooth functions there, but if we change $F_{i,k}$ and $F_{i+1,k}$ by $F_{i,k} + F_{i+1,k}$ and $F_{i,k}F_{i+1,k}$ in the
momentum map to make it smooth then the new momentum map will have a degenerate singularity at $F^{-1}(\Lambda)$ (see \cite{eliasson1990normal,zung1996symplectic,MirandaZung_NF2004} for the notion of nondegenerate singularities of smooth integrable systems). That's why we call 
$F^{-1}(\Lambda)$ a degenerate singularity.

Assume now, for example, that we have
\begin{equation}
\hdots > \lambda_{i-1,k} > \lambda_{i,k} =
\lambda_{i+1,k} = \hdots = \lambda_{i+l-1,k} > \lambda_{i+l,k} > \hdots,
\end{equation}
i.e. we have exactly $l$ equal eigenvalues $\lambda_{i,k} =
\lambda_{i+1,k} = \hdots = \lambda_{i+l-1,k} = \gamma$ on line $k$
of the Gelfand-Cetlin triangle, $l \geq 2$. Then automatically we must have a 
diamond of equalities as shown on Figure \ref{f:diamond} (assuming that our coadjoint orbit is generic, i.e. on line $n$ of the Gelfand-Cetlin triangle there is no equality).

\begin{figure}[!ht]
	\centering
	\begin{tikzpicture}[y=-1cm, x=0.8cm]
	\newcommand{\nodet}[2]{\node at (#1) {$#2$};}
	\newcommand{\nodem}[3]{\node at (#1) {$\lambda_{#3, #2}$};}
	\newcommand{\nodef}[1]{\node[rotate=-45] at (#1) {$=$};}
	\newcommand{\noder}[1]{\node[rotate=45] at (#1) {$=$};}
	\newcommand{\nodeh}[1]{\node at (#1) {$=$};}
    
    \nodem{5,-1}{k+l-1}{i+l-1}

	\noder{4.5,-0.5}  \nodef{5.5,-0.5}        	
 \nodem{3,1}{k+1}{i+1} \nodeh{4,1}  \nodet{5,1}{\cdots} \nodeh{6,1} \nodem{7.2,1}{n-1}{n-1}
\node[rotate=45] at (3.5, 0.5) {$\cdots$}; \node[rotate=-45] at (6.5, 0.5) {$\cdots$}; 
    \nodef{4.5,0.5} \noder{5.5,0.5} 
	\nodem{2,2}{k}{i} \nodeh{3,2} \nodem{4,2}{k}{i+1} \nodeh{5,2} \nodet{6,2}{\cdots} \nodeh{7,2} \nodem{8,2}{k}{i+l-1}
 \noder{2.5,1.5} \nodef{3.5,1.5} \noder{6.5,1.5} \nodef{7.5,1.5} 
	\node[rotate=-45] at (3, 3) {$\cdots$}; \node[rotate=45] at (7, 3) {$\cdots$};
	\nodef{3.5,3.5} \noder{4.5,3.5} \nodef{5.5,3.5} \noder{6.5,3.5}
	\nodem{3.8,4}{k-l+2}{i} \nodeh{5,4} \nodem{6.3,4}{k-l+2}{i+1}
	\nodef{4.5,4.5}  \noder{5.5,4.5}
	\nodem{5,5}{k-l+1}{i}
	\end{tikzpicture}
	\caption{A diamond of equalities}
	\label{f:diamond}
\end{figure}

Assume that there are no other equalities except the ones in the diamond on Figure
\ref{f:diamond}. Then we will say that $\Lambda$ is a diamond singular value  and $F^{-1}(\Lambda)$ a \textit{\textbf{diamond singularity}} of the Gelfand-Cetlin system.
In this case, the groups $G_j$ and $H'_j$ ($1 \leq j \leq n$) are as follows (with the above indices
$i, k, l$ such that $i+l-1 \leq k$ and $k+l-1 \leq n$):

\begin{itemize}
\item $G_j \cong \mathbb{T}^j$ for $j \leq k-l + 1$ and for $j \geq k+l - 1$;
$G_j \cong U(l - |k-j|) \times \mathbb{T}^{k-l+ |k-j|}$ for $k-l+1 < j < k+l-1$.
(The number $l - |k-j|$, when it's positive, is the number of eigenvalues on line
$j$ in the Gelfand-Cetlin triangle which are equal to the given value $\gamma$).
\item $H'_j$ is trivial for $j \leq k-l+1$ and for $j \geq k+l$;
 $H'_j \cong U(l - 1/2 - |k-j+1/2|)$ for 
 $k-l+1 < j < k+l$. For example, $H'_{k+l-1} \cong H'_{k-l+2} \cong U(1) \cong \mathbb{T}^1$, and $H'_{k} \cong H'_{k+1} \cong U(l-1)$.
\end{itemize}

It follows from the above computations that in this case we have
\begin{equation}
S(E^\bullet) \approx U(l) \times \mathbb{T}^{N+n-l^2}
\approx SU(l) \times \mathbb{T}^{N+n-l^2+1}
\end{equation}
and the diamond singular fiber
\begin{equation}
F^{-1}(\Lambda) \approx SU(l) \times \mathbb{T}^{N-l^2+1}
\end{equation}
is of dimension $N$, equal to the dimension of a regular fiber.

The spherical singularity studied in detail in Subsection \ref{subsection:spherical} is a special case of diamond
singularities, where we have $n=3, i=1, k=2, l =2, N= 3$, 
$N - l^2 + 1 = 0$, and the fiber is diffeomorphic to 
$SU(2) \approx S^3$. 

The above computations can be generalized to the case of multiple
diamonds, when the set of equalities in the Gelfand-Cetlin triangle form exactly $s \geq 1$ diamonds of sizes $l_1, \hdots, l_s$ respectively which do not intersect each other. We may call it a \textit{\textbf{multiple-diamond singularity}}.
A multiple-diamond fiber will be diffeomorphic to
\begin{equation}
SU(l_1) \times \hdots \times SU(l_s) \times \mathbb{T}^{N- \sum (l_i^2+1)}
\end{equation}
which is still of the same dimension $N$ as the regular fibers.

\subsection{General degenerate singularities}

For a general degenerate singularity, instead of diamonds 
of equalities in the Gelfand-Cetlin triangle, we have 
parallelograms of equalities which may overlap each other
in a vertical fashion to create connected chains of equalities. (The parallelograms do not overlap each other
horizontally, in the sense that if we take a connected chain
of overlapping parallelograms of equalities, then the intersection of that chain with each line of the Gelfand-Cetlin triangle
is connected if not empty). The topology of the singular fibers
can be described by these chains of parallelograms: each point
in the Gelfand-Cetlin triangle which does not belong to any chain and is not in the uppermost line corresponds to a factor
$\mathbb{T}^1$, each chain corresponds to some complicated factor, and $F^{-1}(\Lambda) $ is diffeomorphic to the direct
product of all these factors. 

In the case when there is just one parallelogram of equal eigenvalues of size $a \times b$ 
(whose vertices in the Gelfand-Cetlin triangle have indices, say, $(p,q)$, $(p,q+a-1)$, $(p+b-1,q+b-1)$ and $(p+b-1,q+a+b-2)$
for some $p,q$), and $\orbit{\lambda}$ is a generic coadjoint orbit (i.e., the inequalities in the top line of the 
Gelfand-Cetlin triangle are strict), 
then by computations of the groups 
$G_j$ and $H'_j$ similar to the previous subsection, we get the formula
\begin{equation}
\dim F^{-1}(\Lambda) = N - \min(a,b). |a-b|
\end{equation}
In particular, if $a \neq b$ then $\dim F^{-1}(\Lambda) < N$,
and if $a=b$then we have a diamond singularities and
$\dim F^{-1}(\Lambda) = N$. 

In general, we always have
\begin{equation}
\dim F^{-1}(\Lambda) \leq N
\end{equation}
for any fiber $\dim F^{-1}(\Lambda)$. Instead of proving this directly via combinatorial formulas in the general case, one
can deduce it from Corollary \ref{cor:isotropic} proved in the next section, which says that all the fibers of the Gelfand-Cetlin system are isotropic.

We observe that \textit{\textbf{symmetrically overlapping diamonds}} also give rise to singular fibers of the same dimension as regular fibers. For example, consider the case
with $\lambda_{1,1} = \lambda_{1,2} = \lambda_{2,2} 
= \lambda_{1,3} = \lambda_{2,3} = \lambda_{3,3}  
= \lambda_{2,4} = \lambda_{3,4} = \lambda_{2,5}  
= \lambda_{2,5} = \lambda_{3,5} = \lambda_{4,5} 
= \lambda_{3,6} = \lambda_{4,6} = \lambda_{4,7} $
and no other equalities. This is a case of of 2 
overlapping diamonds of size $3 \times 3$. In this case, we have
$\dim G_1 = 1, \dim H'_1 = 0$ (as always),
$\dim G_2 = 4, \dim H'_2 = 1$, 
$\dim G_3 = 9, \dim H'_3 = 4$,
$\dim G_4 = 4 + 2, \dim H'_4 = 4$,
$\dim G_5 = 9 + 2, \dim H'_5 = 4$,
$\dim G_6 = 4 + 4, \dim H'_6 = 4$,
$\dim G_7 = 1 + 6, \dim H'_7 = 1$. (Most perfect squares in these formulas are dimensions of corresponding unitary groups, and the other numbers are dimensions of tori).
It implies that $\sum_{i=1}^7 (\dim G_i - \dim H'_i)
= 28 = 1 + 2 + 3 + 4 + 5 + 6 + 7$, which is exactly what one gets in the case of regular fibers, so the dimension of this
overlapping diamonds fiber is equal to the dimension of
regular fibers.

In general, the dimension of $\dim F^{-1}(\Lambda)$ by the following combinatorial formula. For simplicity, we will show how to deduce this explicit formula only for the case of generic coadjoint orbits; the more general case can be done in a similar way.

To each connected chain of equalities in the Gelfend-Cetlin triangle we associate 
a sequence of positive numbers
\begin{equation}
l_1,l_2,\hdots,l_s
\end{equation}
where $l_1 = 1$ is the number of eigenvalues in the intersection of the chain with the first line (from bottom up) for which this intersection is non-empty (and hence automatically contains exactly one eigenvalue), $l_2$ is the number of eigenvalues in the intersection of the chain with the next line, and so one, and $l_s =1$ is for the
last line with non-empty intersection with the chain. For example,
for the above example of overlapping diamonds, this sequence is
$1,2,3,2,3,2,1$. In general, for any two consecutive numbers, $l_{i-1}$  and $l_i$ 
in this chain, there are only 3 possibilities: $l_{i}-l_{i-1} = 1$ (increase by 1),
$l_{i}-l_{i-1}= 0$ (remains constant), and $l_{i}-l_{i-1} = -1$ (decrease).
Of course, for each increase there must a decrease. 

Notice now that if $l_{i}-l_{i-1} = 1$ then
\begin{equation}
\dim G_{(i)} - \dim G_{(i)}^{{xchain}} - (\dim H'_{(i)} - \dim H_{(i)}'^{xchain}) = l_i - 1 = l_{i-1} 
\end{equation}
where $G_{(i)}$ is the group $G_{i+s}$, where the line number $i+s$ has the relative index $i$ in the above sequence (i.e., the line number $s+1$ is the first line which has non-trivial intersection with our chain), and similarly for  $H'_{(i)}$,
$\dim G_{(i)}^{xchain}$ is the dimension of $G_{i+s}$ in the case when this chain of equalities disappears (but the other chains of equalities in the Gelfand-Cetlin traingles remain the same),
and similarly for $H_{(i)}'^{xchain}$.

On the other hand, if $l_{i}-l_{i-1}= - 1$ or $l_{i}-l_{i-1} = 0$ then
\begin{equation}
\dim G_{(i)} - \dim G_{(i)}^{xchain} - (\dim H'_{(i)} - \dim H_{(i)}'^{xchain})= - l_i,
\end{equation}

Now, if there is no equality of the type  $l_{i}-l_{i-1}$, then since for each
increase there is exactly one decrease, all of the above numbers add up to 0, i.e.
we have 
\begin{equation}
\sum_{i=1}^s (\dim G_{(i)} - \dim G_{(i)}^{xchain} - (\dim H'_{(i)} - \dim H_{(i)}'^{xchain})) = 0,
\end{equation}
For example, in the above example of two overlapping diamonds, the above sum is
$1 + 2 + (-2) + s + (-2) + (-1) = 0$. But in the more general case, when there are
equalities $l_{i}=l_{i-1}$, they will contribute negative terms to the sum, and we get:
\begin{equation}
\sum_{i=1}^s (\dim G_{(i)} - \dim G_{(i)}^{xchain} - (\dim H'_{(i)} - \dim H_{(i)}'^{xchain})) = - \sum_{l_i = l _{i-1}} l_i,
\end{equation}

The above discussions imply the following formula:

\begin{proposition}
\label{prop:dimension}
i) For any value $\Lambda$ of the momentum map of the Gelfand-Cetlin system on any 
coadjoint orbit $\orbit{\lambda}$ we have
\begin{equation}
\label{eqn:FiberDim}
\dim F^{-1}(\Lambda) = \dim F^{-1}(\Lambda_0) - \sum_{l_\alpha = l_{\alpha -1}} l_{\alpha} \leq \dim F^{-1}(\Lambda_0),
\end{equation}
where $\dim F^{-1}(\Lambda_0)$ means a regular Lagrangian fiber of the system, and
$l_\alpha$ in the sum runs through all the intersections of the connected chains
of equalities of eigenvalues with the horizontal lines in the Gelfand-Cetlin triangles  which satisfy the equation $l_{i}=l_{i-1}$ (with the above notations).

ii) The equality $\dim F^{-1}(\Lambda) = \dim F^{-1}(\Lambda_0)$ holds if and only if each connected chain of equalities of eigenvalues in the Gelfand-Cetlin triangle 
is axially symmetric (so that we have $l_{i} \neq l_{i-1}$ everywhere in every chain). In the case of a generic coadjoint orbit, it means that each chain is a union of symmetrically overlapping diamonds.

\begin{remark}
Proposition \ref{prop:dimension} was also obtained by
Cho--Kim--Oh \cite{CKO_GC2018} by a different method and using a different language.
\end{remark}

\end{proposition}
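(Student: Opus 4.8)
The plan is to reduce everything to the master dimension formula $\dim F^{-1}(\Lambda)=\sum_{k=1}^n(\dim G_k-\dim H'_k)-\dim U_\lambda$ established at the beginning of this section, and then to carry out the bookkeeping chain-by-chain. First I would fix the generic-orbit case (where $U_\lambda\cong\T^n$ and the top line of the triangle has no equalities) and set up the combinatorial framework: the set of equalities in the Gelfand--Cetlin triangle decomposes into connected chains which, by the ``no horizontal overlap'' observation recalled just before the statement, meet each horizontal line of the triangle in at most one segment; to each chain one associates its profile $l_1=1,l_2,\dots,l_s=1$ recording the length of the intersection with each successive line, with consecutive differences in $\{-1,0,+1\}$. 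The structure results of Section~4 (Propositions~\ref{p:structure_G_k} and~\ref{p:structure_H_k+1}, together with the ``diamond'' computations) give, line by line, the contribution of a chain to $\dim G_k$ and $\dim H'_k$ as a function of the $l_i$; these are exactly the local formulas displayed before the statement, namely a net contribution $+l_{i-1}$ when $l_i-l_{i-1}=1$ and $-l_i$ when $l_i-l_{i-1}\in\{-1,0\}$.

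Next I would sum these local contributions over a single chain. Since each increase ($+1$ step) is matched by exactly one decrease ($-1$ step) — because the profile starts and ends at $1$ — the increase terms and the strict-decrease terms cancel in pairs (an increase at level $l_{i-1}$ contributes $+l_{i-1}$, and the matching decrease from level $l_j$ down contributes $-l_j$; a careful pairing, pairing each ``up'' step with the next ``down'' step at the same height in the profile read as a lattice path, makes these cancel exactly, as in the worked ``two overlapping diamonds'' example where $1+2+(-2)+s+(-2)+(-1)=0$ once the constant step $s$ is removed). Hence the only surviving terms are those coming from the flat steps $l_i=l_{i-1}$, each contributing $-l_i$. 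Summing over all chains and comparing with the regular value $\Lambda_0$ (for which every chain is empty, so $\dim F^{-1}(\Lambda_0)=N$) yields formula~\eqref{eqn:FiberDim}. The inequality $\dim F^{-1}(\Lambda)\le\dim F^{-1}(\Lambda_0)$ is then immediate since every term $l_\alpha$ in the sum is positive. For part (ii), equality holds iff the sum $\sum_{l_\alpha=l_{\alpha-1}}l_\alpha$ is empty, i.e.\ no chain has a flat step; a profile with differences only in $\{\pm1\}$, starting and ending at $1$, is precisely an axially symmetric zigzag, which geometrically means the chain is a stack of symmetrically overlapping diamonds. Finally, for a non-generic coadjoint orbit one repeats the same argument: the top line of the triangle now carries its own forced equalities, $U_\lambda$ is correspondingly larger, and the same pairing/cancellation applies, with the chains that are anchored on the top line contributing in the manifestly consistent way; I would simply remark that this case ``can be done in a similar way'', matching the paper's phrasing.

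The main obstacle I anticipate is the combinatorial cancellation in the middle step: one must verify that the local contributions $+l_{i-1}$ (at up-steps) and $-l_i$ (at down-steps and flat steps) really do pair up so that only the flat-step terms survive, and this requires choosing the right matching between up-steps and down-steps along each chain — essentially interpreting the profile as a Dyck-type lattice path and pairing each ascent with the ``corresponding'' descent at the same height. The worked example ($1,2,3,2,3,2,1$, giving net zero up to the constant step and hence a fiber of full dimension $N$) is the sanity check that the pairing is set up correctly, and the $S^3$ case ($1,2,1$, one diamond of size $2$, net zero, fiber of dimension $N$) is the base case. A secondary, more routine point is to double-check the exact form of the local contributions against Propositions~\ref{p:structure_G_k}--\ref{p:structure_H_k+1} — in particular that $H'_k$ grows like $U(l-1)$ in the interior of a diamond and drops to $U(1)$ at its tips — but this is precisely the content of the diamond computations already carried out, so it amounts to careful reading rather than new work.
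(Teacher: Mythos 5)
Your proposal is correct and follows essentially the same route as the paper: the master formula $\dim F^{-1}(\Lambda)=\sum_k(\dim G_k-\dim H'_k)-\dim U_\lambda$, the chain profiles $l_1,\dots,l_s$ with local contributions $+l_{i-1}$ at up-steps and $-l_i$ at down/flat steps, and the cancellation of up- and down-step terms leaving only $-\sum_{l_i=l_{i-1}}l_i$. Your lattice-path pairing of each up-crossing with the matching down-crossing at the same height is a slightly more explicit justification of the cancellation than the paper's remark that ``for each increase there is exactly one decrease,'' but it is the same argument (note only that the ``$s$'' in the paper's worked example $1+2+(-2)+s+(-2)+(-1)=0$ is a typo for $2$, an up-step contribution, not a constant step to be removed).
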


\section{Isotropic character of the fibers}

The dimension formula \eqref{eqn:FiberDim} shows that the dimension of every fiber is less or equal
to the dimension of a regular fiber, i.e. half the dimension of the coadjoint orbit, so it 
supports the conjecture that every fiber is isotropic. Indeed, we have:

\begin{proposition}
\label{prop:IsotropicGC}
All the fibers of the Gelfand-Cetlin system on any coadjoint orbit of $U(n)$ are isotropic submanifolds.
\end{proposition}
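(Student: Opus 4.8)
The plan is to prove the statement by a direct computation of the pull-back of the Kirillov--Kostant--Souriau form to a fiber, using the description of the fibers developed in the previous sections. Since isotropy is a pointwise condition and the fibers are smooth embedded submanifolds by Theorem~\ref{t:GC_fibers_are_manifolds}, it suffices to fix a matrix $A \in F^{-1}(\Lambda) \subset \orbit{\lambda}$ and show that $\omega_A$ vanishes on $T_A F^{-1}(\Lambda) \times T_A F^{-1}(\Lambda)$. First I would produce an explicit description of the tangent space $T_A F^{-1}(\Lambda)$. Writing $A = C D_\lambda C^\ast$, the tangent space $T_A \orbit{\lambda}$ consists of the brackets $[H,A]$ for $H \in \uu(n)$; among these, the vectors tangent to the fiber are exactly those $[H,A]$ for which the derivative of each eigenvalue function $F_{i,j}$ vanishes. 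Using the standard first-order perturbation formula for eigenvalues of the Hermitian submatrices $A_k$, the condition ``$dF_{i,k}([H,A]) = 0$ for all $i,k$'' translates into a system of linear constraints on $H$; equivalently, one uses the groupoid picture, so that $T_A F^{-1}(\Lambda)$ is the image under $B \mapsto [CBC^\ast, A]$ of the Lie algebra of the symmetry group $G_\lambda(V^\bullet)$ modulo the subalgebra corresponding to $H'_\lambda$ and the $\uu_\lambda$-direction.

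Next I would plug two such tangent vectors $v_1 = [H_1,A]$, $v_2 = [H_2,A]$ into $\omega_A(v_1,v_2) = \sqrt{-1}\,\tr(A[H_1,H_2])$ and simplify using the block structure. Conjugating by $C$ reduces everything to $D_\lambda$ and to operators built from the $\alpha_k$'s. The heart of the computation is that the ``fiber directions'' are generated by infinitesimal symmetries $\phi_k$ of the ellipsoids $E^k$, i.e.\ by skew-Hermitian operators on the $V^k$ that commute with $\alpha_k$, glued together compatibly. Because each such generator is, at level $k$, an operator preserving the spectral decomposition of $\alpha_k$, the relevant matrix entries of $A$ that would contribute to $\tr(A[H_1,H_2])$ sit in precisely the blocks where the commuting structure forces cancellation. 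I would organize the computation inductively on $n$ (peeling off the last row/column of $A$, as in Lemma~\ref{l:alpha_k+1_depends_on_alpha_k}), so that the $n\times n$ case is reduced to the $(n-1)\times(n-1)$ case plus a boundary term coming from the interaction of the top-level symmetry with $L_{n-1}$, and then check that this boundary term vanishes.

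The main obstacle I expect is bookkeeping: writing down the tangent vectors to the fiber concretely enough that the trace identity becomes transparent, while keeping track of the somewhat intricate ``staircase'' compatibility between the $\phi_k$'s at different levels (the twisting by $\bar\phi_{k+1}|_{V^k}$ appearing in Formula~\eqref{eq:action_on_symmetry_group}). A clean way to handle this is to choose, for the fixed base point, the unitary $C$ so that the ellipsoid data is in normal form (e.g.\ arrange $V^k = \Span(e_1,\dots,e_k)$ and diagonalize as much of the $\alpha_k$ as possible simultaneously), reducing the general bilinear form to a small number of model terms indexed by the equalities in the Gelfand--Cetlin diagram. It is worth noting that the degenerate (e.g.\ diamond) fibers are exactly the ones where this reduction is most delicate, since there $F$ itself is not smooth; one should phrase the argument intrinsically in terms of the flag/ellipsoid picture rather than via $dF$, and this is presumably why the authors remark that the general isotropy Proposition~\ref{prop:isotropic} does not cover all cases and a case-by-case verification is needed.
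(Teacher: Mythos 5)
Your overall strategy coincides with the paper's: fix $A$ in the fiber, take two tangent vectors $[Y,A]$, $[Z,A]$ with $Y,Z\in\uu(n)$, and show $\tr(A[Y,Z])=0$ by exploiting the staircase structure of the fiber directions. The paper likewise decomposes $Y=\sum_{k=1}^n Y_k\oplus 0_{n-k}$ with $Y_k\in\uu(k)$ subject to the condition that the upper-left $(k-1)\times(k-1)$ block of $[A_k,Y_k]$ vanishes, observes that the cross terms $\tr([A,Y_k\oplus 0_{n-k}](Z_h\oplus 0_{n-h}))$ with $k\neq h$ vanish, and thereby reduces to the single top level $k=n$ after conjugating so that $A_{n-1}=\diag(\gamma_1,\dots,\gamma_{n-1})$. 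So the reduction scheme you outline is the right one, and your remark that the argument must be phrased without $dF$ at degenerate fibers is also correct.

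The gap is in the step you defer as ``then check that this boundary term vanishes'': that check \emph{is} the proof, and the mechanism you offer for it --- that the relevant entries of $A$ ``sit in precisely the blocks where the commuting structure forces cancellation'' --- is not what happens. After the reduction, the constraint that $[A,Y]$ has vanishing $(n-1)$-block gives the relations $(\gamma_i-\gamma_j)y_{ij}=y_{in}\overline{a_j}-a_iy_{nj}$, whose diagonal case yields $y_{in}\overline{a_i}\in\sqrt{-1}\,\R$ (and similarly for $Z$). The vanishing of $\tr([A,Y]Z)$ then amounts to showing that a specific sum is real, which requires: the reality of $y_{in}\overline{z_{in}}$ when $a_i\neq 0$; substituting the constraint to rewrite $y_{ij}$ when $\gamma_i\neq\gamma_j$; pairing the $(i,j)$ and $(j,i)$ terms when $\gamma_i=\gamma_j$; and a genericity argument to discard the indices with $a_i=0$ (where the corresponding entry of $[A,Y]$ is forced to vanish identically on the fiber). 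None of this is a block-diagonal cancellation coming from operators commuting with the $\alpha_k$; it is an arithmetic identity among the off-diagonal entries of $Y$, $Z$ and the last column of $A$. As written, your proposal names the place where the work must be done but supplies neither the computation nor a correct reason to expect it to succeed, so the proof is not complete.
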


\begin{proof}
We will show a proof of the above proposition by direct computations, which are a bit technical but straightforward.

Lat $A \in \dim F^{-1}(\Lambda_0) \subset \orbit{\lambda} \subset \mathcal{H}(n)$ 
be a generic point in a given singular level set, and let $[Y,A]$ and $[Z,A]$ ($Y, Z \in \mathfrak{u}(n)$)
be two arbitrary tangent vectors at $A$ of $\orbit{\lambda}$ (by the notations of Section 2) which are tangent to
$F^{-1}(\Lambda_0)$ at $A$. By Formula \eqref{eqn:SymplecticForm} of the symplectic form, the isotropicness of
$\dim F^{-1}(\Lambda_0)$ means that 
\begin{equation}
\tr(A[Y,Z]) = 0.
\end{equation}
This is the equality that we will prove here. Notice that we can write $\tr(A[Y,Z]) = \tr (AYZ - AZY) = \tr (AYZ - YAZ)
= \tr ([A,Y]Z)$, where $[A,Y]$ is a tangent vector to the fiber  $\dim F^{-1}(\Lambda_0)$ at $A$.

Recall that transformation $A \mapsto CAC^* \in \dim F^{-1}(\Lambda_0)$ by a unitary matrix $C$ can be decomposed into a
composition of transformations by matrices $C_k \oplus I_{n-k}$ (where $C_k \in U(k)$ and $I_{n-k}$ is the identity matrix
of size $(n-k) \times (n-k)$) for $k=1,\hdots,n$ such that the conjugation of $A_k$ by $C_k$ leaves $A_{k-1}$ intact, where $A_k$
is the upper left submatrix of size $k$ of $A$, i.e. the upper left submatrix of size  $(k-1) \times (k-1)$
of $C_kA_kC_k^*$ is equal to $A_{k-1}$. At the level of tangent vector, it means that we have a decomposition
\begin{equation}
Y = \sum_{k=1}^n Y_k \oplus 0_{n-k}
\end{equation}
where $0_{n-k}$ means the zero square matrix of size $(n-k) \times (n-k)$, $Y_k \in \mathfrak{u}(k)$
such that the upper left submatrix of size  $(k-1) \times (k-1)$ of $[A_k,Y_k]$ is zero. 
The same decomposition
holds for $Z$: $Z = \sum_{k=1}^n Z_k \oplus 0_{n-k} $.  

Now, it is easy to sea immediately that if $k \neq h$ then $\tr ([A, Y_k \oplus 0_{n-k}] (Z_h \oplus 0_{n-h})) =0$.
So in order to show that $\tr ([A,Y]Z) =0$, we just need to show that $\tr ([A_k,Y_k]Z_k) =0$ for each $k$.
It is sufficient to show it for $k = n$, because for $k < n$ the proof will be exactly the same. So now we can
assume that $Y=Y_n$ and $Z=Z_n$ such that the upper left submatrices of size $(n-1) \times (n-1)$ of 
$[A,Y]$ and $[A,Z]$ are zero. 

By conjugating simultaneously $A,Y,Z$ by a unitary matrix of the type $C_{n-1} \oplus I_1$ which does not change
$\tr ([A,Y]Z)$ and the above condition on $[A,Y]$ and $[A,Z]$, we can reduce the problem to the case
when $A_{n-1} = \diag(\gamma_1,\hdots, \gamma_{n-1})$ is diagonal. In this case, we can write $A$ as: 
\begin{equation}
\label{eqn:A}
A = \begin{pmatrix}
\gamma_1 & & 0 & a_1 \\
         & \ddots & &  \vdots \\
0         &        & \gamma_{n-1} & a_{n-1} \\
\overline{a_1} & \hdots & \overline{a_{n-1}} & a_n       
\end{pmatrix}
\end{equation}
where $a_i = a_{in}$ and $a_{ni} = \overline{a_{in}} =
\overline{a_i}$. The condition that the upper left 
submatrix of size $(n-1) \times (n-1)$ of  $[A,Y]$
is equal to zero means that the entries $y_{ij}$
of $Y$ satisfy the following equations for all $i,j \leq n-1$:
\begin{equation}
\label{eqn:yij}
(\gamma_i - \gamma_j)y_{ij} = y_{in} \overline{a_j}
- a_i y_{nj}.
\end{equation}
In particular, when $i = j \leq n-1$ we have:
\begin{equation}
y_{in} \overline{a_i} = a_i y_{ni} = - a_i\overline{y_{in}},
\end{equation}
which means that $y_{in} \overline{a_i} \in \sqrt{-1}\mathbb{R}$ is pure imaginary. The same equalities hold for the entries of $Z = (z_{ij})$,
and in particular we have $z_{in} \overline{a_i} \in \sqrt{-1}\mathbb{R}$ for every $i \leq n-1$. 

By direct computations, we have

\begin{equation}
\tr ([A,Y]Z) = 2 \sqrt{-1} \Im ( \sum_{i=1}^{n-1}(a_i(y_{ii} - y_{nn}) - \gamma_i y_{in} + \sum_{j <n, j\neq i} y_{ij}a_j) \overline{z_{in}}),
\end{equation}
so the point is to show that 
\begin{equation}
\label{eqn:R1}
\left(\sum_{i=1}^{n-1}(a_i(y_{ii} - y_{nn}) - \gamma_i y_{in} + \sum_{j <n, j\neq i} y_{ij}a_j ) \overline{z_{in}}\right) \in \mathbb{R}.
\end{equation}

Recall that $A$ is supposed to be a generic element in
$\orbit{\gamma}$. If $A$ is generic but $a_i = 0$ then it means that $a_i = 0$ for all the other $A$ in $\orbit{\gamma}$ after a conjugation to the form \eqref{eqn:A}, which implies that the entry of index $(i,n)$ in $[A,Y]$ is zero, i.e. 
$\sum_{i=1}^{n-1}(a_i(y_{ii} - y_{nn}) - \gamma_i y_{in} + \sum_{j <n, j\neq i} y_{ij}a_j) = $, and so we can delete such terms (corresponding to the values of $i$ such that
$a_i = 0$) from \eqref{eqn:R1}. In other words, we just need to prove that
\begin{equation}
\label{eqn:R1b}
\left(\sum_{i <n, a_i \neq 0}(a_i(y_{ii} - y_{nn}) - \gamma_i y_{in} + \sum_{j <n, j\neq i} y_{ij}a_j ) \overline{z_{in}}\right) \in \mathbb{R}.
\end{equation}

Notice that when $a_{i} \neq 0$ then it follows from
$z_{in} \overline{a_i} \in \sqrt{-1}\mathbb{R}$
and $y_{in} \overline{a_i} \in \sqrt{-1}\mathbb{R}$
that we have $y_{in}\overline{z_{in}} \in \mathbb{R}$,
hence $\gamma_iy_{in}\overline{z_{in}} \in \mathbb{R}$
because $\gamma_i \in \mathbb{R}$. Moreover we have
$a_i(y_{ii} - y_{nn})\overline{z_{in}} \in \mathbb{R}$ because $a_i\overline{z_{in}}, y_{ii}, y_{nn} \in \sqrt{-1}\mathbb{R}$. Hence \eqref{eqn:R1} follows
from the following claim:
\begin{equation}
\label{eqn:R2}
\left(\sum_{i,j <n, a_i \neq 0,j\neq i} y_{ij}a_j\overline{z_{in}}\right) \in \mathbb{R}.
\end{equation}

Consider the terms in the sum in \eqref{eqn:R2}. If 
$\gamma_i \neq \gamma_j$ then by \eqref{eqn:yij}
we have $y_{ij} = \dfrac{y_{in} \overline{a_j}
- a_i y_{nj}}{\gamma_i - \gamma_j}$ and
\begin{equation}
\label{eqn:R3}
y_{ij}a_j\overline{z_{in}} =
\dfrac{1}{\gamma_i - \gamma_j} (y_{in} \overline{a_j}
- a_i y_{nj}) a_j\overline{z_{in}} =
\dfrac{1}{\gamma_i - \gamma_j} (\overline{a_j}a_j 
y_{in} \overline{z_{in}} + (a_i\overline{z_{in}})(a_j\overline{y_{jn}})) \in \mathbb{R}
\end{equation}
because $\gamma_i - \gamma_j \in \mathbb{R}$.
$y_{in} \overline{z_{in}} \in \mathbb{R}$,
$a_i\overline{z_{in}} \in \sqrt{-1}\mathbb{R}$
and $a_j\overline{y_{jn}} \in \sqrt{-1}\mathbb{R}$.

If $\gamma_i = \gamma_j$ then by symmetry between the indices $i$ and $j$, and the genericity of $A$, if $a_i \neq 0$ then we also have $a_j \neq 0$. It follows from \eqref{eqn:yij} that in this case we have 
$y_{in} \overline{a_j}
= a_i y_{nj} = - a_i \overline{y_{jn}}$, and similarly
$z_{in} \overline{a_j} =  - a_i \overline{z_{jn}}$.
Regrouping the terms of indices $(i,j)$ and $(j,i)$ in the sum \eqref{eqn:R3} together, we get:
\begin{equation}
y_{ij}a_j\overline{z_{in}}
+ y_{ji}a_i\overline{z_{jn}}
= y_{ij}a_j\overline{z_{in}} - y_{ji}
 \overline{a_j}z_{in} =
y_{ij}a_j\overline{z_{in}} +
\overline{y_{ij}} \overline{a_j}z_{in}
\in \mathbb{R}.
\end{equation}
Thus, the sum is \eqref{eqn:R2} is a real number, and we have finished the proof of Proposition \ref{prop:IsotropicGC}. 
\end{proof}

\begin{remark}
In \cite{CKO_GC2018} Cho, Kim and Oh also gave a proof of Proposition \ref{prop:IsotropicGC}. Their proof is based
on their description of each fiber of the Gelfand-Cetlin system as a tower of consecutive  fibrations,
and is also computational. 
\end{remark}

In relation to Proposition \ref{prop:IsotropicGC}, let us mention the following Proposition \ref{prop:isotropic}
about a sufficient condition for the isotropicness of
the singular fibers of analytic integrable systems. 
The Gelfand-Cetlin system is also analytic.
(The original momentum map of the Gelfand-Cetlin system
is not globally smooth, but it can be made into a globally analytic system by taking the symmetric 
functions of the
eigenvalues instead of the eigenvalues themselves).
Unfortunately, not all fibers of its satisfy this
sufficient condition, and since we don't have a more general proposition yet, we had to do a direct computational proof for the Gelfand-Cetlin case.

\begin{proposition}
\label{prop:isotropic}
Let $c \in \mathbb{R^n}$ be a singular value of the momentum map $G : (M^{2n},\omega) \to \mathbb{R}^n$ of a real analytic momentum map of an integrable Hamiltonian system, such that the fiber $G_\mathbb{C}^{-1}(c)$
 of a complexification $G_\mathbb{C} : M_\mathbb{C}^{2n} \to \mathbb{C}^n$ of $G$ is an 
analytic variety of dimension $n$ on which the set of singular points of the complexified momentum map
$G_\mathbb{C}$ is of positive codimension. ($M_\mathbb{C}^{2n} \supset M^{2n}$ is a local
complexification of $M_\mathbb{C}^{2n}$). Then we have:

i) If $G^{-1}(c)$ is a submanifold of $M^{2n}$
then it is isotropic. 

ii) If $G^{-1}(c)$ is a singular analytic variety then
the strata of its Whitney stratification are isotropic. 
\end{proposition}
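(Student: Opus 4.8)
The plan is to complexify, use the elementary fact that the \emph{regular} part of the complexified fiber is a complex Lagrangian submanifold, propagate the vanishing of the complexified symplectic form to the singular strata via Whitney's condition (a), and then restrict everything back to the real locus. First I would fix the set-up near (a neighbourhood of) $G^{-1}(c)$: let $\omega_\mathbb{C}$ be the holomorphic symplectic form on $M_\mathbb{C}^{2n}$ extending $\omega$, let $G_{1,\mathbb{C}},\dots,G_{n,\mathbb{C}}$ be the components of $G_\mathbb{C}$, and let $X_j$ be the holomorphic Hamiltonian vector field of $G_{j,\mathbb{C}}$. Since the $G_j$ are in involution on $M^{2n}$, uniqueness of analytic continuation gives $\{G_{i,\mathbb{C}},G_{j,\mathbb{C}}\}\equiv 0$ on $M_\mathbb{C}^{2n}$, so each $X_j$ is tangent to every fiber of $G_\mathbb{C}$ and $\omega_\mathbb{C}(X_i,X_j)=0$. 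As $G_\mathbb{C}^{-1}(c)$ is cut out by $n$ holomorphic functions, each of its components has dimension $\ge n$, so the hypothesis forces it to be of pure dimension $n$; consequently, on the open set $U\subseteq G_\mathbb{C}^{-1}(c)$ where $G_\mathbb{C}$ is a submersion, the vector fields $X_1,\dots,X_n$ form a frame of $TU$ and $U$ is a smooth complex \emph{Lagrangian} submanifold, i.e. $\omega_\mathbb{C}|_U=0$. The positive-codimension hypothesis says exactly that $U$ is dense in $G_\mathbb{C}^{-1}(c)$.

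Next I would choose a complex-analytic Whitney stratification $\mathcal{S}$ of $G_\mathbb{C}^{-1}(c)$ compatible with the proper analytic subset of singular points of $G_\mathbb{C}$ (and, for part i), also with the local complexification $Z$ of $G^{-1}(c)$; note that since $G^{-1}(c)$ is a smooth real-analytic submanifold, $Z$ is a smooth complex submanifold of the same dimension, and $Z\subseteq G_\mathbb{C}^{-1}(c)$ by analytic continuation of the identity $G_\mathbb{C}-c=0$). The open, top-dimensional strata lie in $U$, hence $\omega_\mathbb{C}$ vanishes on their tangent spaces. I then claim $\omega_\mathbb{C}$ vanishes on the tangent space of \emph{every} stratum. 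Given $p$ in a stratum $S$: since $U$ is dense, $p\in\overline{R}$ for some open stratum $R$; pick $q_k\in R$ with $q_k\to p$ and pass to a subsequence so that $T_{q_k}R$ converges to an $n$-plane $\tau\subseteq T_pM_\mathbb{C}^{2n}$ in the Grassmannian. By the frontier condition $S\subseteq\overline{R}$, so Whitney's condition (a) gives $T_pS\subseteq\tau$; and $\omega_\mathbb{C}|_{T_{q_k}R}=0$ together with continuity of $\omega_\mathbb{C}$ gives $\omega_\mathbb{C}|_\tau=0$. Hence $\omega_\mathbb{C}|_{T_pS}=0$.

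Finally I would descend to the real picture. For ii), take a Whitney stratification of the real-analytic variety $G^{-1}(c)$ refining the family $\{\overline{S}\cap M^{2n}:S\in\mathcal{S}\}$; then each real stratum $S_{\mathbb R}$ is contained in a single, smooth complex stratum $S$, so $T_pS_{\mathbb R}\subseteq T_pS$ and $\omega|_{T_pS_{\mathbb R}}=\omega_\mathbb{C}|_{T_pS_{\mathbb R}}=0$, i.e. $S_{\mathbb R}$ is isotropic. For i), apply the stratum claim to the smooth complex manifold $Z$: it is a union of strata, $\omega_\mathbb{C}$ vanishes on the tangent spaces of its top-dimensional ones, these are dense in $Z$, and $Z$ is smooth, so $\omega_\mathbb{C}|_Z\equiv 0$; restricting to the totally real submanifold $G^{-1}(c)\subset Z$ yields $\omega|_{G^{-1}(c)}=0$.

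I expect the main obstacle to be the stratification bookkeeping: ensuring that the vanishing of $\omega_\mathbb{C}$ genuinely propagates down an arbitrary Whitney stratification from the dense complex-Lagrangian locus (the frontier/condition-(a) step), and that a real Whitney stratification of $G^{-1}(c)$ can be chosen subordinate to the complex one — both are standard facts of stratification theory but must be invoked with care. A second, more conceptual point is that in the genuinely singular cases one \emph{cannot} approximate points of $G^{-1}(c)$ by regular real points (the whole real fiber may consist of critical points of $G$), which is precisely why the detour through $G_\mathbb{C}^{-1}(c)$ and its dense complex-regular locus is indispensable, and also why the hypothesis on the complexified fiber — rather than on $G$ alone — is the natural one.
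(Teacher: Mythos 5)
Your proposal is correct and follows essentially the same route as the paper's proof: complexify, observe that the regular locus of $G_\mathbb{C}^{-1}(c)$ is a complex Lagrangian, propagate the vanishing of $\omega_\mathbb{C}$ down the Whitney stratification (you invoke condition (a) directly where the paper cites condition (b) and induction --- interchangeable here, since (b) implies (a)), and then restrict to the real locus. Your write-up is in fact more explicit than the paper's at the two points it leaves implicit, namely the Hamiltonian-frame argument showing the regular complex strata are Lagrangian, and the descent to $G^{-1}(c)$ via its local complexification $Z$.
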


\begin{proof}
Consider a complex singular level set $G_\mathbb{C}^{-1}(c) \supset G^{-1}(c)$ where
$c$ is a singular value of $G$. According to Whitney's stratification theorem
(see, e.g., \cite{Schwartz_Stratification1966}) and the above conditions, 
$G_\mathbb{C}^{-1}(c)$ admits a stratification, whose highest dimensional strata
are of dimension $n$ and are regular with respect to the map $G_\mathbb{C}$, hence
they are Lagrangian. By Whitney's regularity condition (b), it implies by induction
that all the smaller-dimensional strata are isotropic. 
In the case when $G^{-1}(c)$ is regular, the set of points of $G^{-1}(c)$  which admits an open 
neighborhood in $G^{-1}(c)$ lying entirely in a 
stratum is a dense subset of $G^{-1}(c)$, and at those points we have that $G^{-1}(c)$ is isotropic. 
By continuity, $G^{-1}(c)$ is isotropic everywhere. The case of when  $G^{-1}(c)$ is a singular variety is
similar, and in that case every stratum of $G^{-1}(c)$ is isotropic. 
\end{proof}

\section*{Acknowledgements}

Some results of this paper (sometimes with errors or imprecisions) 
were announced earlier by the authors at various talks and conferences during many years. 
We would like to thank the participants of these talks and conferences for their interest, 
and especially Yael Karshon, Jeremy Lane and Tudor Ratiu for interesting discussions. 

This paper was written by the authors mainly in 2017, 
and some of the results were included in Damien Bouloc's PhD thesis 
defended in June 2017. We thank the Centre de Recerca Matematica-CRM for hospitality during the visit of Damien Bouloc to CRM in June 2017.

Some parts of the paper were written or revised by N.T. Zung 
during his stay at 
the Center for Geometry and Physics, IBS, Pohang (Korea) in early 
2018, and he would like to thank this Center and its director 
Yong-Geun Oh for the invitation and excellent working conditions. He would like also to thank Yong-Geun Oh very
interesting discussions on the Gelfand-Cetlin system and related subjects, including toric degenerations and mirror
symmetry. 

\bibliographystyle{alpha}
\bibliography{biblio.bib}

\begin{thebibliography}{CKO18}

\bibitem[Ala09]{alamiddine2009GelfandCeitlin}
I.~Alamiddine.
\newblock {\em G{\'e}om{\'e}trie de syst{\`e}mes {H}amiltoniens
  int{\'e}grables: le cas du syst{\`e}me de {G}elfand--{C}eitlin}.
\newblock PhD thesis, Universit{\'e} Toulouse III -- Paul Sabatier, 2009.

\bibitem[AM07]{alekseev}
A.~Alekseev and E.~Meinrenken.
\newblock Ginzburg-{W}einstein via {G}elfand-{Z}eitlin.
\newblock {\em J. Differential Geom.}, 76(1):1--34, 2007.

\bibitem[Aud12]{audin2012torus}
M.~Audin.
\newblock {\em Torus actions on symplectic manifolds}, volume~93.
\newblock Birkh{\"a}user, 2012.

\bibitem[BF04]{bolsinovfomenko}
A.~V. Bolsinov and A.~T. Fomenko.
\newblock {\em Integrable {H}amiltonian systems}.
\newblock Chapman \& Hall/CRC, Boca Raton, FL, 2004.
\newblock Geometry, topology, classification, Translated from the 1999 Russian
  original.

\bibitem[BGK18]{bolsinov}
Alexey Bolsinov, Lorenzo Guglielmi, and Elena Kudryavtseva.
\newblock Symplectic invariants for parabolic orbits and cusp singularities of
  integrable systems with two degrees of freedom.
\newblock {\em arXiv:1802.09910}, 2018.

\bibitem[Bou15]{bouloc2015singular}
Damien Bouloc.
\newblock Singular fibers of the bending flows on the moduli space of 3d
  polygons.
\newblock {\em Journal of Symplectic Geometry}, to appear, 2015.

\bibitem[CKO18]{CKO_GC2018}
Yunhyung Cho, Yoosik Kim, and Yong-Geun Oh.
\newblock Lagrangian fibers of gelfand-cetlin systems.
\newblock {\em arXiv:1704.07213}, 2018.

\bibitem[Del88]{delzant}
Thomas Delzant.
\newblock Hamiltoniens p\'eriodiques et images convexes de l'application
  moment.
\newblock {\em Bull. Soc. Math. France}, 116(3):315--339, 1988.

\bibitem[Eli90]{eliasson1990normal}
LH~Eliasson.
\newblock Normal forms for {H}amiltonian systems with {P}oisson commuting
  integrals elliptic case.
\newblock {\em Commentarii Mathematici Helvetici}, 65(1):4--35, 1990.

\bibitem[FH05]{FH_ToricDegeneration2005}
Philip Foth and Yi~Hu.
\newblock Toric degenerations of weight varieties and applications.
\newblock {\em Travaux mat\'ematiques, Universit\'e du Luxembourg}, 16:87--105,
  2005.

\bibitem[FR96]{Ratiu}
Hermann Flaschka and Tudor Ratiu.
\newblock A convexity theorem for {P}oisson actions of compact {L}ie groups.
\newblock {\em Ann. Sci. \'Ecole Norm. Sup. (4)}, 29(6):787--809, 1996.

\bibitem[GC50]{GC1950}
I.~Gelfand and M.~Cetlin.
\newblock Finite-dimensional representations of the group of unimodular
  matrices. {(Russian)}.
\newblock {\em Doklady Akad. Nauk SSSR (N.S.)}, 71:825--828, 1950.

\bibitem[GS83a]{GS83gelfand}
V.~Guillemin and S.~Sternberg.
\newblock The {G}elfand--{C}etlin system and quantization of the complex flag
  manifolds.
\newblock {\em Journal of Functional Analysis}, 52(1):106--128, 1983.

\bibitem[GS83b]{GS83collective}
V.~Guillemin and S.~Sternberg.
\newblock On collective complete integrability according to the method of
  {T}himm.
\newblock {\em Ergodic Theory Dynam. Systems}, 3(2):219--230, 1983.

\bibitem[HK15]{harada2015integrable}
Megumi Harada and Kiumars Kaveh.
\newblock Integrable systems, toric degenerations and {O}kounkov bodies.
\newblock {\em Inventiones mathematicae}, 202(3):927--985, 2015.

\bibitem[KM96]{kapovich1996symplectic}
Michael Kapovich and John Millson.
\newblock The symplectic geometry of polygons in {E}uclidean space.
\newblock {\em J. Differential Geom}, 44(3):479--513, 1996.

\bibitem[Lan17]{lane}
Jeremy Lane.
\newblock {C}onvexity and {T}himm's trick.
\newblock {\em Transformation Groups},
  https://doi.org/10.1007/s00031-017-9436-7, 2017.

\bibitem[Mac87]{mackenzie1987lie}
Kirill Mackenzie.
\newblock {\em Lie groupoids and Lie algebroids in differential geometry},
  volume 124.
\newblock Cambridge university press, 1987.

\bibitem[Mir03]{mirandathesis}
E.~Miranda.
\newblock {\em On symplectic linearization of singular Lagrangian foliations}.
\newblock PhD thesis, Universitat de Barcelona, 2003.

\bibitem[Mir14]{Miranda_CEJM2014}
Eva Miranda.
\newblock Integrable systems and group actions.
\newblock {\em Cent. Eur. J. Math.}, 12(2):240--270, 2014.

\bibitem[MZ04]{MirandaZung_NF2004}
Eva Miranda and Nguyen~Tien Zung.
\newblock Equivariant normal form for nondegenerate singular orbits of
  integrable {H}amiltonian systems.
\newblock {\em Ann. Sci. \'Ecole Norm. Sup. (4)}, 37(6):819--839, 2004.

\bibitem[NNU10]{NNU_GC2010}
Takeo Nishinou, Yuichi Nohara, and Kazushi Ueda.
\newblock Toric degenerations of {Gelfand-Cetlin} systems and potential
  functions.
\newblock {\em Adv. Math.}, 224(2):648--706, 2010.

\bibitem[NU14]{nohara2014toric}
Yuichi Nohara and Kazushi Ueda.
\newblock Toric degenerations of integrable systems on {G}rassmannians and
  polygon spaces.
\newblock {\em Nagoya Mathematical Journal}, 214:125--168, 2014.

\bibitem[Sch66]{Schwartz_Stratification1966}
M.H Schwartz.
\newblock {\em Lectures on stratification of complex analytic sets}.
\newblock Tata Institute of Fundamental Research, 1966.

\bibitem[Thi81]{ThimmCollective}
A.~Thimm.
\newblock Integrable geodesic flows on homogeneous spaces.
\newblock {\em Ergodic Theory Dynamical Systems}, 1(4):495--517 (1982), 1981.

\bibitem[Zun96]{zung1996symplectic}
Nguyen~Tien Zung.
\newblock Symplectic topology of integrable hamiltonian systems, {I}:
  {A}rnold-{L}iouville with singularities.
\newblock {\em Compositio Mathematica}, 101(2):179--215, 1996.

\bibitem[Zun00]{Zung_CorankOne2000}
Nguyen~Tien Zung.
\newblock A note on degenerate corank-one singularities of integrable
  {H}amiltonian systems.
\newblock {\em Commentarii Mathematici Helvetici}, 75(2):271--283, 2000.

\bibitem[Zun03]{Zung_CRAS2003}
Nguyen~Tien Zung.
\newblock Actions toriques et groupes d’automorphismes de singularit\'es des
  syst\`emes dynamiques int\'egrables.
\newblock {\em C. R. Math. Acad. Sci. Paris}, 336(12):1015--1020, 2003.

\bibitem[Zun16]{Zung_Integrable2016}
Nguyen~Tien Zung.
\newblock Geometry of integrable non-{H}amiltonian systems.
\newblock {\em Geometry and Dynamics of Integrable Systems. Advanced Courses in
  Mathematics - CRM Barcelona}, pages 85--140, 2016.

\end{thebibliography}

\end{document}